\crefname{subsection}{Subsection}{subsections}
\crefname{subsubsection}{Section}{subsubsections}
\theoremstyle{plain}
\newtheorem{Th}{Theorem}[section]
\newtheorem{lemma}[Th]{Lemma}
\newtheorem{Cor}[Th]{Corollary}
\newtheorem{Prop}[Th]{Proposition}
\theoremstyle{definition}
\newtheorem{Def}[Th]{Definition}
\newtheorem{Conj}[Th]{Conjecture}
\newtheorem{Rem}[Th]{Remark}
\newtheorem{Pro}[Th]{Problem}
\newcommand{\R}{\mathbb{R}}
\newcommand{\C}{\mathbb{C}}
\newcommand{\N}{\mathbb{N}}
\newcommand{\Z}{\mathbb{Z}}
\newcommand{\zv}{\mathbf{z}}
\newcommand{\yv}{\mathbf{y}}
\newcommand{\1}{\boldsymbol{1}}
\newcommand{\0}{\boldsymbol{0}}
\newcommand{\Scal}{\mathcal{S}}
\newcommand{\Pcal}{\mathcal{P}}
\newcommand{\Hcal}{\mathcal{H}}
\newcommand{\Acal}{\mathcal{A}}
\newcommand{\Mcal}{\mathcal{M}}
\newcommand{\Rcal}{\mathcal{R}}
\newcommand{\prb}{\mathbb{P}}
\newcommand{\E}{\mathbb{E}}
\newcommand{\X}{\mathfrak{X}}
\newcommand{\Y}{\mathcal{Y}}
\newcommand{\im}{\operatorname{Im}}
\newcommand{\M}{\mathrm{M}}
\newcommand{\m}{\mathrm{maxroot}}
\newcommand{\ab}{\mathrm{Ab}}
\newcommand{\supp}{\mathrm{supp}}
\newcommand{\ovl}{\overline}
\newcommand{\bld}{\boldsymbol}
\title{\textbf{Paving Property for Real Stable Polynomials and Strongly Rayleigh Processes}}
\author{Kasra Alishahi
\thanks{Department of Mathematical Sciences, Sharif University of Technology. Email: \protect\url{alishahi@sharif.edu}.}
\and
Milad Barzegar
\thanks{Department of Mathematical Sciences, Sharif University of Technology. Email: \protect\url{milad.barzegar@sharif.edu}.}
}
\date{}
\begin{document}
\maketitle

\begin{abstract}

One of the equivalent formulations of the Kadison-Singer problem which was resolved in 2013 by Marcus, Spielman and Srivastava, is the ``paving conjecture". Roughly speaking, the paving conjecture states that every positive semi-definite contraction with small diagonal entries can be ``paved" by a small number of principal submatrices with small operator norms. We extend this result to real stable polynomials. We will prove that assuming mild conditions on the leading coefficients of a multi-affine real stable polynomial, it is possible to partition the set of variables to a small number of subsets such that the roots of the ``restrictions" of the polynomial to each set of variables are small.

We will use this generalized paving theorem to show that for every strongly Rayleigh point process, it is possible to partition the underlying space into a small number of subsets such that the points of the restrictions of the point process to each subset are ``weakly correlated". This result is intuitively appealing since it implies that the repulsive force among the points of a negatively dependent point process cannot be strong everywhere. To prove this result, we will introduce the notion of the kernel polynomial for strongly Rayleigh processes. This notion is a generalization of the notion of the kernel of determinantal processes and provides a unified framework for studying these two families of point processes. We will also prove an entropy lower bound for strongly Rayleigh processes in terms of the roots of the kernel polynomial.

\end{abstract}

\section{Introduction}

In 1959, Richard V. Kadison and Igor M. Singer \cite{Kadison1959Extensions} raised the question whether every pure state on the algebra of bounded diagonal operators on $\ell^2(\N)$ has a unique extension to a state on the algebra of all bounded operators on $\ell^2(\N)$. This problem has come to be known as the \textit{Kadison-Singer problem}. Over the next 54 years, this problem attracted a significant amount of research until it was resolved in the affirmative in 2013 by Adam Marcus, Daniel Spielman and Nikhil Srivastava \cite{Marcus2015InterlacingII}.

One important aspect of the Kadison-Singer problem is that it has been shown to be equivalent to a large number of problems in various fields. One of these equivalent formulations which will be our main focus, is as follows.

\begin{Pro}\label{Pro:PavingConjecture}
Let $\varepsilon \in (0,1)$. Does there exist $r \in \N$ such that every Hermitian matrix $A$ whose diagonal entries are zero can be $(r,\varepsilon)$-paved, i.e., there are diagonal projections $P_1,\dots,P_r$ such that $\sum_{i=1}^r P_i = I$ and
\begin{align*}
\forall i \in [r] \ : \ \| P_i A P_i \|_{op} \leq \varepsilon \, \| A \|_{op},
\end{align*}
where $[r] = \{1,\dots,r\}$ and $\| \cdot \|_{op}$ denotes the operator norm.
\end{Pro}

This formulation of the Kadison-Singer problem, which is known as the \textit{paving problem} (or the \textit{paving conjecture} for the assertion that the answer to the above question is ``yes"), was discovered by Joel Anderson \cite{Anderson1979Extensions}. Anderson showed that the answer to the paving problem is positive if and only if the answer to the Kadison-Singer problem is positive. For a background on the Kadison-Singer problem and its equivalent formulations see \cite{Bownik2018Kadison} and the references therein.

Marcus et al. \cite{Marcus2015InterlacingII} proved a stronger version of ``Weaver's vector balancing formulation" of the Kadison-Singer problem (see \cite{Bownik2018Kadison}) using the ``method of interlacing families". This method was first introduced in \cite{Marcus2015InterlacingI} and provides a technique for proving the existence of certain combinatorial objects. We will review interlacing families in \cref{Subsec:InterlacingFamilies}. The application of this method results in an analysis of the locations of the roots of a real stable polynomial. The ``multivariate barrier method", introduced in \cite{Marcus2015InterlacingII}, provides a framework for such an analysis.

Marcus et al. \cite{Marcus2015InterlacingII} obtained the following paving bound for positive semi-definite contractions with bounded diagonal entries.

\begin{Th}\label{Th:MSSBoundedDiagonalPaving}
Let $\alpha$ be a positive number and $r$ be an integer such that $r \geq 2$. For every positive semi-definite contraction $A \in \Mcal_n(\C)$ with diagonal entries at most $\alpha$, there are diagonal projections $P_1,\dots,P_r \in \Mcal_n(\C)$ such that $\sum_{i=1}^r P_i = I_n$ and
\begin{align*}
\forall i \in [r] \ : \ \| P_i A P_i \|_{op} \leq \bigg( \sqrt{\dfrac{1}{r}} + \sqrt{\alpha} \bigg)^2.
\end{align*}
\end{Th}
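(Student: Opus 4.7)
The plan is to reduce \cref{Th:MSSBoundedDiagonalPaving} to the probabilistic statement on sums of independent rank-one Hermitian matrices proved by Marcus--Spielman--Srivastava via the method of interlacing families, and then translate the resulting spectral bound back to the paving formulation.

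First, I would factor $A = A^{1/2} A^{1/2}$ and set $v_i = A^{1/2} e_i \in \C^n$. Then $\|v_i\|^2 = A_{ii} \leq \alpha$, $\sum_i v_i v_i^* = A \preceq I_n$, and the similarity $\|P_k A P_k\|_{op} = \|A^{1/2} P_k A^{1/2}\|_{op} = \bigl\|\sum_{i \in S_k} v_i v_i^*\bigr\|_{op}$ reduces the problem to partitioning $[n]$ into $r$ sets $S_1,\dots,S_r$ so that each partial sum has operator norm at most $(\sqrt{1/r} + \sqrt{\alpha})^2$.

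Next, I would lift the partitioning into a random sum in an enlarged space. Let $\xi_1,\dots,\xi_n$ be independent uniform random variables on $[r]$ and set $u_i = \sqrt{r}\, v_i \otimes e_{\xi_i} \in \C^n \otimes \C^r$. A direct computation gives
\begin{align*}
\sum_i u_i u_i^* \;=\; r \sum_{k=1}^r \Bigl(\sum_{i : \xi_i = k} v_i v_i^*\Bigr) \otimes e_k e_k^*,
\end{align*}
which is block-diagonal, so $\bigl\|\sum_i u_i u_i^*\bigr\|_{op} = r \cdot \max_k \bigl\|\sum_{i \in S_k(\xi)} v_i v_i^*\bigr\|_{op}$. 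Moreover $\sum_i \E[u_i u_i^*] = A \otimes I_r \preceq I_{nr}$ and $\E\mathrm{tr}(u_i u_i^*) = r\|v_i\|^2 \leq r\alpha$. After absorbing the slack $I_{nr} - A \otimes I_r$ by adjoining a few deterministic rank-one terms, one is in the hypotheses of the MSS rank-one sum theorem: independent rank-one PSD random matrices whose expectations sum to the identity, each of expected squared norm at most $r\alpha$. The resulting bound $\bigl\|\sum_i u_i u_i^*\bigr\|_{op} \leq (1+\sqrt{r\alpha})^2$ with positive probability, once divided by $r$, yields exactly the target $(\sqrt{1/r} + \sqrt{\alpha})^2$.

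The main obstacle is proving the MSS rank-one sum theorem itself, which I would attack through interlacing families and the multivariate barrier method alluded to in \cref{Subsec:InterlacingFamilies}. One forms the expected characteristic polynomial $p(x) = \E_\xi \det(xI - \sum_i u_i u_i^*)$, realizes it as a real-stability-preserving differential operator applied to a multi-affine product such as $\prod_{i,k}(1 + y_{i,k})$, and concludes from real-rootedness together with the interlacing-family structure that some realization of $\xi$ yields a matrix whose largest eigenvalue is bounded by the largest root of $p$. The hard step is then the barrier analysis: one chooses a starting point $z_0$ whose multivariate barrier values $\Phi_j(z) = \partial_{z_j} \log p(z)$ are calibrated to $r\alpha$ and $1$, tracks how each application of $1 - \partial_{z_j}$ shifts the barrier point under controlled translations, and shows that the process terminates with the largest root below $(1+\sqrt{r\alpha})^2$. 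Sharpening the barrier drift enough to recover exactly these quadratic constants is the most delicate part of the argument.
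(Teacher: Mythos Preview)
The paper does not give its own proof of \cref{Th:MSSBoundedDiagonalPaving}; the theorem is quoted as a result of Marcus--Spielman--Srivastava. Your sketch is precisely the standard derivation from their rank-one sum theorem via Weaver's $KS_r$ formulation, and the reduction steps --- factoring $A=A^{1/2}A^{1/2}$, the identity $\|P_kAP_k\|_{op}=\bigl\|\sum_{i\in S_k}v_iv_i^*\bigr\|_{op}$, the tensor lift to $\C^n\otimes\C^r$, and padding with deterministic rank-one pieces to force $\sum_i\E[u_iu_i^*]=I_{nr}$ --- are all correct. One point to make precise: the rank-one summands coming from $I_{nr}-A\otimes I_r$ may individually have norm up to $1$, so each must be split into enough scaled copies of squared norm at most $r\alpha$ before the MSS hypothesis applies; the desired bound then follows since adding the PSD slack back can only increase the operator norm.

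It is worth contrasting this with the route the present paper actually develops (following Leake--Ravichandran): rather than passing through vector balancing, one works directly with the multivariate characteristic polynomial $\chi[A]$, builds an interlacing family from the products $\prod_i \ovl{\partial^{S_i^c}\chi[A]}$, and bounds the largest root of their sum via a barrier argument tailored to the operator $\prod_i\partial_i^{r-1}$. That approach yields the strictly sharper constant $\big(\sqrt{1/r-\alpha/(r-1)}+\sqrt{\alpha}\big)^2$ of \cref{Th:LRMatrixPaving} and, more importantly for this paper, extends verbatim to arbitrary multi-affine real stable polynomials (\cref{Th:PolynomialPaving}) --- something the vector-based reduction you outline cannot do.
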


Leake and Ravichandran \cite{Leake2020Mixed} adapted the methods of \cite{Marcus2015InterlacingII} to directly prove the paving conjecture and as a result, they got sharper paving bounds. They used the method of interlacing families in conjunction with a modified version of the multivariate barrier method. Their result is as follows.

\begin{Th}\label{Th:LRMatrixPaving}
Let $r \in \Z$ and $\alpha \in \R$ such that $r \geq 2$ and $0 < \alpha \leq (r-1)^2/r^2$. For every positive semi-definite contraction $A \in \Mcal_n(\C)$ with diagonal entries at most $\alpha$, there are diagonal projections $P_1,\dots,P_r \in \Mcal_n(\C)$ such that $\sum_{i=1}^r P_i = I$ and
\begin{align*}
\forall i \in [r] \ : \ \| P_i A P_i \|_{op} \leq \bigg( \sqrt{\dfrac{1}{r} - \dfrac{\alpha}{r-1}} + \sqrt{\alpha} \bigg)^2.
\end{align*}
\end{Th}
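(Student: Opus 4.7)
The plan is to follow the interlacing families plus multivariate barrier strategy of Marcus-Spielman-Srivastava, with the refinement of Leake-Ravichandran that directly targets the paving conjecture and yields sharper bounds than \cref{Th:MSSBoundedDiagonalPaving}. First I would reduce the paving problem to a root bound for a single real-rooted polynomial. Since $A$ is positive semi-definite, write $A = B^* B$ with $B \in \C^{d \times n}$ and let $b_1, \ldots, b_n$ be the columns of $B$, so that $\|b_k\|^2 = A_{kk} \leq \alpha$ and $\lambda_{\max}(\sum_k b_k b_k^*) = \|A\|_{op} \leq 1$. For any partition $\pi : [n] \to [r]$ one has $\|P_{\pi^{-1}(i)} A P_{\pi^{-1}(i)}\|_{op} = \lambda_{\max}\bigl(\sum_{k \in \pi^{-1}(i)} b_k b_k^*\bigr)$; hence it suffices to exhibit $\pi$ for which the polynomial
$$q_\pi(x) \;:=\; \prod_{i=1}^r \det\Bigl(x I_d - \sum_{k \in \pi^{-1}(i)} b_k b_k^*\Bigr)$$
has $\m(q_\pi) \leq \bigl(\sqrt{1/r - \alpha/(r-1)} + \sqrt{\alpha}\bigr)^2$.

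Next I would set up an interlacing family over a uniform random partition $\pi$. Conditioning on $\pi(1), \ldots, \pi(\ell)$ for $\ell = 0, 1, \ldots, n$ yields an $r$-ary tree whose root polynomial is $\E_\pi[q_\pi]$ and whose leaves are the individual $q_\pi$. Real-stability of the multi-affine polynomial
$$P\bigl(x; (y_{k,i})\bigr) \;=\; \prod_{i=1}^r \det\Bigl(x I_d - \sum_k y_{k,i} b_k b_k^*\Bigr),$$
together with the fact that real stability is preserved under symmetrization of each group $\{y_{k,1}, \ldots, y_{k,r}\}$ against the uniform distribution on the standard basis vectors, ensures that the conditional-expectation polynomials at the children of every internal node share a common interlacer. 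The standard interlacing family lemma then guarantees the existence of a realization $\pi^*$ with $\m(q_{\pi^*}) \leq \m(\E_\pi[q_\pi])$, so it is enough to bound $\m(\E_\pi[q_\pi])$.

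For the final step I would apply the modified multivariate barrier method of Leake-Ravichandran to the polynomial $\E_\pi[q_\pi(x)]$, which is the evaluation of $P\bigl(x; (y_{k,i})\bigr)$ at $y_{k,i} = 1/r$ for all $k, i$. Starting from a point $(x_0, y_0)$ lying above all roots of $P$ in a suitable sense, one iteratively decreases each $y$-coordinate down to $1/r$ while appropriately increasing $x$, controlling the barrier functions $\partial_{y_{k,i}} \log P$. Choosing the initial $x_0$ and the per-step shifts optimally, and using both the diagonal bound $\|b_k\|^2 \leq \alpha$ and the contraction bound $\sum_k b_k b_k^* \preceq I$, one obtains the root bound asserted in \cref{Th:LRMatrixPaving}; the hypothesis $\alpha \leq (r-1)^2/r^2$ is exactly what is required for the optimal initial barrier location to lie above all roots.

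The main obstacle is this last step. The original MSS barrier analysis, applied naively, yields only the bound $(\sqrt{1/r} + \sqrt{\alpha})^2$ of \cref{Th:MSSBoundedDiagonalPaving}, and improving to $(\sqrt{1/r - \alpha/(r-1)} + \sqrt{\alpha})^2$ requires a carefully chosen shifted barrier that simultaneously exploits the diagonal and operator-norm constraints on $A$. The delicate point is verifying that the invariants used in the barrier induction, originally stated for a single PSD matrix, continue to hold for the product polynomial $P$ in the presence of both constraints, and that the optimized shifts do not leave the region in which each barrier function remains a genuine upper bound on the corresponding maximum root.
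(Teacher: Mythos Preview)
Your overall architecture matches both the paper and Leake--Ravichandran: an interlacing family indexed by partitions, reducing the problem to a root bound on the polynomial at the root of the tree. In the paper this is carried out at the level of multi-affine real stable polynomials (\cref{Th:PolynomialPaving}) and then specialized to $g=\chi[A](\zv)=\det(Z-A)$, using that $\partial^{S_i^c}\chi[A]=\chi[P_iAP_i]$; your direct matrix formulation is the specialization of that argument.

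Where the proposal breaks down is the barrier step. The assertion that $\E_\pi[q_\pi(x)]$ equals $P\bigl(x;(y_{k,i})\bigr)$ evaluated at $y_{k,i}=1/r$ is false: although $P$ is multi-affine in each $y_{k,i}$, for fixed $k$ the variables $y_{k,1},\dots,y_{k,r}$ are dependent (exactly one equals $1$) and sit in \emph{different} factors of $P$, so mixed monomials $y_{k,i}y_{k,j}$ with $i\neq j$ have expectation $0$, not $r^{-2}$. Already for $n=1$, $r=2$ and a single column $v$ one gets $\E_\pi[q_\pi]=x^{2d-1}(x-\|v\|^2)$ versus $P|_{y=1/2}=x^{2d-2}\bigl(x-\tfrac12\|v\|^2\bigr)^2$, with different largest roots. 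Correspondingly, the Leake--Ravichandran barrier argument is not ``decrease each $y$-coordinate to $1/r$'': the correct mechanism (see \cref{Prop:CombinatorialExpressionOfg_r} and \cref{Prop:Delta->Phi}--\cref{Prop:UpperBoundForPhi}) is to write the root polynomial as $\bigl(\prod_{i=1}^n\partial_i^{\,r-1}\bigr)g(\zv)^r\big|_{\zv=x\1}$ with $g=\chi[A]$, start from $b\1\in\ab_{g^r}$ for $b>1$, and after each $\partial_j^{\,r-1}$ retreat in the $j$th coordinate by an amount governed by the barrier $\Phi^j$. Finally, the hypothesis $\alpha\le(r-1)^2/r^2$ is not what places the starting point above the roots (any $b>1$ works, by \cref{Lem:AboveRootsOfg}); it enters only in the optimization over $b$ (\cref{Lem:Optimization}) that yields the sharper constant. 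Until you replace the ``evaluate at $1/r$'' picture by the ``differentiate $r-1$ times in each variable'' picture, the last step does not go through.
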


In \cref{Sec:PolynomialPaving}, we will show that the arguments of \cite{Leake2020Mixed} extend to \textit{real stable polynomials} and we will obtain a generalization of the above theorem. It is worthwhile to mention that two other generalizations of the Kadison-Singer problem appear in \cite{Anari2014Kadison} and \cite{Branden2018Hyperbolic}, both of which are through the main result of \cite{Marcus2015InterlacingII}.

A polynomial $p \in \C[z_1,\dots,z_n]$ is \textit{stable} if it has no roots in $\mathbb{H}^n$, where $\mathbb{H}$ is the open upper half-plane, and it is \textit{real stable} if, in addition, its coefficients are real. We will review stable polynomials in \cref{Subsec:StablePolynomials}. 

Before stating our result, let us fix some notations. When $n$ is specified in the context, we will use $\zv = (z_1,\dots,z_n)$. Let $\partial_i := \partial / \partial z_i$ and for every $I \subset [n]$, define $\zv^I = \prod_{i \in I} z_i$ and $\partial^I = \prod_{i \in I} \partial_i$. For $p \in \C[z_1,\dots,z_n]$, we will use $\ovl{p}$ to denote its \textit{diagonalization}  defined by $\ovl{p}(x) = p(x,\dots,x)$. For a real rooted polynomial $p$, we will denote its maximum root by $\m(p)$. Our generalization of \cref{Th:LRMatrixPaving} is as follows.

\begin{Th}\label{Th:PolynomialPaving}
Let $r \in \Z$ and $\alpha \in \R$ such that $r \geq 2$ and $0 < \alpha \leq (r-1)^2/r^2$. Assume that $g \in \R[z_1,\dots,z_n]$ is a multi-affine real stable polynomial and $g(\zv) = \sum_{A \subseteq [n]} a_A \, \zv^{A^c}$. If all the roots of $\ovl{g}$ are in the interval $[0,1]$, $a_\emptyset=1$ and $|a_{\{i\}}| \leq \alpha$ for $i = 1,\dots,n$, then there exists a partition $\{S_1,\dots,S_r\}$ of $[n]$ such that
\begin{align*}
\forall i \in [r] \ : \ \m \big( \, \ovl{\partial^{S_i^c}g} \, \big) \leq \bigg( \sqrt{\dfrac{1}{r} - \dfrac{\alpha}{r-1}} + \sqrt{\alpha} \bigg)^2.
\end{align*}
\end{Th}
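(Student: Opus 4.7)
I would lift the Leake--Ravichandran proof of \cref{Th:LRMatrixPaving} from matrices to real stable polynomials, combining the method of interlacing families with the multivariate barrier method. For each $\sigma:[n]\to[r]$, set $S_j(\sigma):=\sigma^{-1}(j)$ and
\begin{align*}
q_\sigma(x) \;:=\; \prod_{j=1}^r \ovl{\partial^{S_j(\sigma)^c}g}(x);
\end{align*}
each factor is real rooted since $g$ is real stable and differentiation, restriction to the reals, and diagonalisation all preserve real stability. Arrange the $q_\sigma$ as the leaves of a depth-$n$ tree whose interior nodes are indexed by partial assignments $\tau:[k]\to[r]$ and carry the conditional expectation $q_\tau:=\mathbb{E}[q_\sigma\mid\sigma|_{[k]}=\tau]$. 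Provided this is an interlacing family, the theorem reviewed in \cref{Subsec:InterlacingFamilies} will produce some $\sigma^*$ with $\m(q_{\sigma^*})\le\m(\mathbb{E}_\sigma q_\sigma)$, after which the partition $S_j=(\sigma^*)^{-1}(j)$ is the candidate required by the theorem.

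\textbf{Real stability via a lift.} To handle the interlacing-family property and the expected polynomial uniformly, introduce $r$ fresh copies $\yv_j=(y_{1,j},\dots,y_{n,j})$ of the variables and form the multi-affine real stable polynomial $G(\yv):=\prod_{j=1}^r g(\yv_j)$. A direct computation gives
\begin{align*}
\prod_{j=1}^r \ovl{\partial^{S_j^c}g}(x_j) \;=\; \Bigl[\prod_{i=1}^n\prod_{j\ne\sigma(i)}\partial_{y_{i,j}}\Bigr]\,G(\yv)\,\Big|_{y_{i,j}=x_j},
\end{align*}
so averaging over $\sigma$ uniformly collapses the bracket to $r^{-n}\prod_{i=1}^n T_i$ with $T_i:=\sum_j\prod_{j'\ne j}\partial_{y_{i,j'}}$. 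The symbol of $T_i$---and of its weighted analogue $\sum_j\lambda_j\prod_{j'\ne j}\partial_{y_{i,j'}}$ for any $\lambda_j\ge0$---on multi-affine polynomials in $y_{i,1},\dots,y_{i,r}$ is the linear form $\sum_j\lambda_j(y_{i,j}+z_j)$, which has non-negative coefficients and is therefore real stable; the Borcea--Br\"and\'en criterion then implies that each of these operators preserves real stability. Combined with the stability-preserving diagonalisation $y_{i,j}\mapsto x_j$, the weighted version shows that every convex combination $\sum_j\lambda_j q_{\tau\cup(k+1,j)}$ is real rooted, which is the common-interlacer condition needed to verify the interlacing-family property, while the uniform version produces a real stable polynomial $Q(x_1,\dots,x_r)$ with $\ovl Q(x)=r^n\,\mathbb{E}_\sigma q_\sigma(x)$; in particular, bounding $\m(\ovl Q)$ bounds $\m(\mathbb{E}_\sigma q_\sigma)$.

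\textbf{Barrier method and main obstacle.} With $Q$ real stable, I would apply the multivariate barrier method to bound $\m(\ovl Q)$. Initialise the barrier functions $\Phi^j(x_1,\dots,x_r)=\partial_{x_j}Q/Q$ at the point $(u,\dots,u)$ with
\begin{align*}
u \;=\; \Bigl(\sqrt{\tfrac1r-\tfrac{\alpha}{r-1}}+\sqrt{\alpha}\Bigr)^{\!2},
\end{align*}
and track their evolution as one passes, one variable at a time, through the operators $T_i$. The hypotheses $a_\emptyset=1$ and $\m(\ovl g)\le1$ place $u$ safely above the starting barriers, while $|a_{\{i\}}|\le\alpha$ plays the role of the matrix diagonal-entry bound by controlling the per-step barrier increment. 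I expect the main technical hurdle to lie precisely here: the matrix arguments of \cite{Leake2020Mixed} exploit Schur-complement and Cauchy-interlacing identities to obtain sharp per-step barrier inequalities, and reproducing them in the purely polynomial setting---with only the stability of $g$ and the coefficient hypotheses $a_\emptyset=1$, $|a_{\{i\}}|\le\alpha$, $\m(\ovl g)\le1$ available---requires translating those determinantal identities into inequalities on barrier functions of arbitrary real stable polynomials, with constants sharp enough to yield exactly the Leake--Ravichandran bound. Once the per-step inequalities are secured, $\m(\ovl Q)\le u$ follows, and combined with the interlacing step this completes the proof.
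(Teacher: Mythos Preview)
Your interlacing-families half is correct and essentially identical to the paper's (compare your lift $G(\yv)=\prod_j g(\yv_j)$ and operators $T_i=\sum_j\prod_{j'\ne j}\partial_{y_{i,j'}}$ with the paper's \cref{Rem:Generalr}); the Borcea--Br\"and\'en argument you sketch for the common-interlacer condition is exactly what the paper does in \cref{Lem:OurInterlacingFamilyIsInterlacing}.

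The barrier half, however, is where the proposal stalls, and in two distinct ways.

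First, the setup is off. You cannot initialise at the target point $u=\big(\sqrt{1/r-\alpha/(r-1)}+\sqrt{\alpha}\big)^2$: this number is at most $1$, while $\m(\ovl g)$ may equal $1$, so $(u,\dots,u)$ need not lie above the roots of anything at the start. The paper instead runs the barrier method on the $n$-variable polynomial $p=g^r$ (not on your $r$-variable $Q$), starts at $b\1$ with $b>1$ (\cref{Lem:AboveRootsOfg}), applies $\partial_1^{r-1},\dots,\partial_n^{r-1}$ in turn, and only at the very end optimises over $b$ via \cref{Lem:Optimization} to produce $u$. Your description ``track the barriers $\Phi^j=\partial_{x_j}Q/Q$ as one passes through the $T_i$'' conflates two different variable sets: once you have $Q(x_1,\dots,x_r)$, the $T_i$ have already been applied and there is nothing left to evolve.

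Second---and this is the genuine missing idea---you correctly flag that the crux is obtaining the sharp per-step barrier inequality without matrices, but the paper's resolution is not to find a purely polynomial argument. Instead it \emph{reintroduces} a matrix: since $\ovl{\partial_i g}\ll\ovl g$, the roots of $\ovl{\partial_i g}$ interlace those of $\ovl g$, and \cref{Lem:FromMarshal} manufactures a real symmetric matrix $A$ whose eigenvalues are the roots of $\ovl g$ and whose leading principal minor has eigenvalues the roots of $\ovl{\partial_i g}$. One then checks that $A$ is a positive semi-definite contraction with the relevant diagonal entry $A_{n,n}=a_{\{i\}}\le\alpha$, so the original Leake--Ravichandran matrix lemma (\cref{Lem:FromLeake2}) applies verbatim to give $\Phi_{g^r}^i(b\1)\le r\big(\tfrac{\alpha}{b-1}+\tfrac{1-\alpha}{b}\big)$ (\cref{Prop:UpperBoundForPhi}). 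A companion sign argument (\cref{Prop:LowerBoundForlambda_r}, based on \cref{Lem:PositiveRootedPolynomialCoefficientSigns}) supplies the lower bound on the smallest root needed for the shift estimates in \cref{Prop:!->delta}. Without this matrix-reconstruction trick your plan has no mechanism for converting the hypotheses $a_\emptyset=1$, $|a_{\{i\}}|\le\alpha$, $\lambda(\ovl g)\subset[0,1]$ into the exact barrier bound, and that is the step on which the target constant depends.
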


We will prove the above theorem in \cref{Subsec:ProofOfPolynomialPaving}. To deduce \cref{Th:LRMatrixPaving} from \cref{Th:PolynomialPaving}, we need the notion of \textit{multivariate characteristic polynomial} of a matrix. The multivariate characteristic polynomials of a matrix $A \in \Mcal_n(\C)$, denoted $\chi[A]$, is defined by $\chi[A](\zv) = \det[Z-A]$, where $Z = \mathrm{Diag}(z_1,\dots,z_n)$. The multivariate characteristic polynomial of Hermitian matrices are real stable (see the remarks following \cref{Prop:R.S.P.Example}).

Let $A$ be as in \cref{Th:LRMatrixPaving}. Note that the coefficient of the monomial $z_1 \dots z_n$ in $\chi[A]$ is equal to 1. Also, for each $i \in [n]$, the coefficient of the monomial $z_1 \dots z_{i-1} z_{i+1} \dots z_n$ is equal to $A_{i,i}$ and so its absolute value is less than $\alpha$. Since $\ovl{\chi[A]}$ is the characteristic polynomial of $A$ and $A$ is a positive semi-definite contraction, all its roots are in the interval $[0,1]$. Therefore, by \cref{Th:PolynomialPaving}, there exists a partition $\{S_1,\dots,S_r\}$ of $[n]$ such that
\begin{align*}
\forall i \in [r] \ : \ \m \big( \, \ovl{\partial^{S_i^c} \chi[A]} \, \big) \leq \bigg( \sqrt{\dfrac{1}{r} - \dfrac{\alpha}{r-1}} + \sqrt{\alpha} \bigg)^2.
\end{align*}
Let $P_i \in \Mcal_n(\C)$ be the diagonal matrix whose $k$-th diagonal entry is equal to 1 if $k \in S_i$ and is equal to 0 if $k \not\in S_i$. Since $\{S_1,\dots,S_r\}$ is a partition of $[n]$, we have $\sum_{i=1}^r P_i = I$. Now, \cref{Th:LRMatrixPaving} follows since
\begin{align*}
\|P_iAP_i\|_{op} = \m\big( \, \ovl{\chi[P_iAP_i]} \, \big) \quad \text{and} \quad \chi[P_iAP_i] = \partial^{S_i^c} \chi[A].
\end{align*}

We will use \cref{Th:PolynomialPaving} to prove a ``paving property" for strongly Rayleigh point processes. A point process $\X$ on $[n]$, i.e. a random subset of $[n]$, is \textit{strongly Rayleigh} if its probability generating polynomial, defined as
\begin{align*}
f_\X(\zv) = \sum_{A \subseteq [n]} \prb (\X = A) \, \zv^A,
\end{align*}
is real stable.

Robin Pemantle in \cite{Pemantle2000Towards} emphasized the need for a theory of negative dependence which would take shape around an appropriate notion of negative dependence. The strong Rayleigh property was introduced by Borcea, Br\"{a}nd\'{e}n and Liggett \cite{Borcea2009Negative} as this appropriate notion. Strongly Rayleigh point processes have many useful properties including negative association which is the strongest form of negative dependence, and they cover several well-known examples of negatively dependent processes, most notably discrete ``determinantal processes". These processes have also found numerous application; see, for example, \cite{Gharan2011Randomized, Branden2012Negative, Pemantle2014Concentration, Anari2014Kadison, Anari2016Monte, Ghosh2017Multivariate}.

We will review strongly Rayleigh processes in \cref{Subsec:S.R.Processes}. We will also introduce the notion of \textit{kernel polynomial} for strongly Rayleigh processes which plays a role similar to the kernel of determinantal processes and provides a unified framework for studying strongly Rayleigh and determinantal processes.

We need the notion of entropy in order to state the paving property of strongly Rayleigh processes. Recall that the entropy of a random element $X$ from a finite set $S$, denoted $H(X)$, is defined by
\begin{align*}
H(X) = - \sum_{x \in S} \, \prb(X=x) \, \log\!\big(\prb(X=x)\big),
\end{align*}
where the logarithms are taken in base 2. We use $h(p)$ to the denote the entropy of a Bernoulli random variable $X$ with $\prb(X=1) = p$. The paving property for strongly Rayleigh processes is as follows.

\begin{Th}\label{Th:S.R.P.PavingEntropyVersion}
For each positive number $\delta$, there exists an integer $r$ such that for every strongly Rayleigh process $\X$ on any space $S$, it is possible to partition $S$ into $r$ subsets $S_1,\dots,S_r$ such that
\begin{align*}
\forall i \in [r] \ : \ \Bigg| \dfrac{1}{|S_i|} H(\X \cap S_i) - \dfrac{1}{|S_i|} \sum_{j \in S_i} h(p_j) \Bigg| < \delta,
\end{align*}
where $|S_i|$ denotes the size of $S_i$ and $p_j = \prb(j \in \X)$.
\end{Th}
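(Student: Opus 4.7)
The plan is to apply Theorem~\ref{Th:PolynomialPaving} to the \emph{kernel polynomial} $K_\X$ of $\X$ (introduced in an earlier section of the paper) and translate the conclusion into an entropy estimate via bounds available for strongly Rayleigh processes. The kernel polynomial $K_\X \in \R[z_1,\dots,z_n]$ is multi-affine and real stable; written in the form $K_\X(\zv) = \sum_{A \subseteq [n]} a_A \zv^{A^c}$ required by Theorem~\ref{Th:PolynomialPaving}, it satisfies $a_\emptyset = 1$ and $a_{\{j\}} = -p_j$, and $\ovl{K_\X}$ has all roots $\mu_1,\dots,\mu_n$ in $[0,1]$. Two structural facts, to be established beforehand, drive the argument: (i)~for any $S' \subseteq [n]$, the kernel polynomial of the restriction $\X \cap S'$ (which is itself strongly Rayleigh) equals $\partial^{[n]\setminus S'} K_\X$; and (ii)~the entropy admits the lower bound $H(\X) \geq \sum_k h(\mu_k)$, generalizing the determinantal identity $H(\X) = \sum_k h(\lambda_k)$.

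Fix $\alpha > 0$ and $r \geq 2$ to be chosen later, and assume temporarily that $p_j \leq \alpha \leq (r-1)^2/r^2$ for all $j$. Applying Theorem~\ref{Th:PolynomialPaving} to $g = K_\X$ yields a partition $\{S_1,\dots,S_r\}$ of $[n]$ with $\m\bigl(\ovl{\partial^{S_i^c} K_\X}\bigr) \leq \beta := \bigl(\sqrt{1/r - \alpha/(r-1)} + \sqrt{\alpha}\bigr)^2$ for every $i$. By fact~(i), the roots $\mu_k^{(i)}$ of the diagonalized kernel polynomial of $\X \cap S_i$ all lie in $[0, \beta]$. Using that the roots of $\ovl{K_{\X \cap S_i}}$ majorize the marginals of $\X \cap S_i$ (this generalizes Schur's theorem for Hermitian matrices), every $p_j$ with $j \in S_i$ satisfies $p_j \leq \beta$ as well.

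The negative association of strongly Rayleigh processes yields $H(\X \cap S_i) \leq \sum_{j \in S_i} h(p_j)$, and fact~(ii) applied to $\X \cap S_i$ gives $H(\X \cap S_i) \geq \sum_k h(\mu_k^{(i)}) \geq 0$. Combining these with $p_j \leq \beta \leq 1/2$ (for $r$ large enough) and the monotonicity of $h$ on $[0,1/2]$,
\[
0 \leq \frac{1}{|S_i|}\sum_{j \in S_i} h(p_j) - \frac{1}{|S_i|} H(\X \cap S_i) \leq \frac{1}{|S_i|}\sum_{j \in S_i} h(p_j) \leq h(\beta).
\]
Choosing $r$ sufficiently large and $\alpha = \alpha(r)$ sufficiently small makes $h(\beta) < \delta$, completing the argument under the assumption $\max_j p_j \leq \alpha$.

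To remove this assumption, one needs a preprocessing step that reduces all marginals below $\alpha$ while respecting the entropy comparison. A first reduction uses the involution $A + z_j B \leftrightarrow B + z_j A$ on the probability generating polynomial, which corresponds to replacing $\X$ by $\X \triangle \{j\}$: this preserves the strong Rayleigh property and the entropy and flips $p_j$ to $1 - p_j$, so after inverting every $j$ with $p_j > 1/2$ we may assume $p_j \leq 1/2$. To push $p_j$ further below the threshold $\alpha$, one introduces a ``splitting'' construction replacing each heavy element by several light copies in a way that preserves the strong Rayleigh property and permits collapsing the partition of the enlarged process back to a partition of $S$. Designing this splitting carefully and checking that the collapse respects the claimed entropy estimate is the main technical obstacle; a secondary obstacle is establishing the restriction identity~(i) and the entropy lower bound~(ii) on which the argument rests.
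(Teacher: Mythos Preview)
Your argument under the assumption $\max_j p_j \leq \alpha$ is correct but nearly vacuous: once every $p_j$ is forced below a small threshold $\beta$, both $\frac{1}{|S_i|}\sum_{j\in S_i} h(p_j)$ and $\frac{1}{|S_i|}H(\X\cap S_i)$ are individually at most $h(\beta)$, so of course their difference is small. The entire content of the theorem lies in the case where the marginals are \emph{not} uniformly small, and here your plan rests on a ``splitting'' construction that you do not actually have. There is no known operation on strongly Rayleigh processes that replaces a point of marginal $p_j$ by several light copies in a way that simultaneously preserves the strong Rayleigh property, controls the entropy, and allows a partition of the enlarged space to be collapsed back to a partition of $S$ with the required entropy estimate. (The flip $\X \mapsto \X \triangle \{j\}$ only gets you to $p_j \leq 1/2$, which is useless for your bound.) So the proposal has a genuine gap at exactly the point where the theorem has content.

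The paper circumvents this entirely by working with the \emph{centered} kernel $\xi(\zv) = g_\X(\zv + p)$, which satisfies $a_{\{i\}}=0$ for every $i$ regardless of the size of the marginals. One then applies the zero-diagonal paving (\cref{Prop:ZeroDiagonalPolynomialPaving}, a two-sided consequence of \cref{Th:PolynomialPaving}) to $\xi$, obtaining a partition for which every root of each $\ovl{\xi_i}$ lies in $[-\varepsilon,\varepsilon]$. The link back to the un-shifted kernel is provided by a hyperbolic majorization inequality (Gurvits's theorem, used in \cref{Lem:ProofOfProbabilisticInterpretation}): $\lambda(\ovl{g}_{\X\cap S_i}) \prec \lambda(\ovl{\xi_i}) + p_\downarrow$. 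Schur-concavity then gives $\sum_k h(\lambda_k^{(i)}) \geq \sum_j h(\gamma_j^{(i)} + p_{(j)}) \geq \sum_j h(p_j) - |S_i|\delta$ by uniform continuity of $h$, and combining this with the entropy lower bound $H(\X\cap S_i) \geq \sum_k h(\lambda_k^{(i)})$ and the subadditivity upper bound finishes the proof. The point is that the shift absorbs the marginals into the polynomial itself, so no preprocessing of $\X$ is needed.
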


Note that $r$ does not depend on the size of $S$. This implies that for strongly Rayleigh processes on large enough spaces, the underlying space can be partitioned into a small number of sets such that the entropy per particle of the restrictions of the process to each part is close to that of its independent version. We interpret this as the points of each restriction being ``almost independent". This is in line with the behavior that we expect from a point process with repulsion; that is, we expect that the correlation structure of the points of such a process is constrained in the sense that all its points cannot simultaneously be strongly correlated. We will discuss this phenomenon in more detail in \cref{Subsec:RepellingProperty}.

We will prove \cref{Th:S.R.P.PavingEntropyVersion} in \cref{Sec:ProofOfS.R.P.PavingEntropyVersion}. To this end, we will apply a slightly modified version of \cref{Th:PolynomialPaving}, presented in \cref{Subsec:ProofOfZeroDiagonalPolynomialPaving}, to the kernel polynomial of $\X$. This will give us a partition of the underlying space with the property that the roots of ``centered versions" of the kernels of the restricted processes are simultaneously small. We will translate this algebraic condition to an entropy inequality via the connection between stability and ``hyperbolicity" and exploiting the majorization properties of hyperbolic polynomials. We will also need an entropy estimation in terms of the roots of its kernel polynomials which we will present in \cref{Subsec:EntropyLowerBound}.

This entropy bound is interesting on its own. Since the correlation structure of a strongly Rayleigh process is constrained, we expect that its entropy cannot be too small. \cite[Corollary 5.6]{Anari2018LogConcave} provides a lower bound for the entropy of strongly Rayleigh processes in terms of the entropy of its marginals. We will prove a lower bound for the entropy in terms of the roots of the kernel polynomial (see \cref{Th:EntropyLowerBound}). We will demonstrate in \cref{Subsec:S.R.Processes} that the roots of of the kernel polynomial play a similar role to the eigenvalues of the kernel of determinantal processes. Motivated by this comparison, one can ask whether there is a ``probabilistic interpretation" of the roots of the kernel polynomial. We will propose a conjecture which can be regarded as a first step in formalizing this question.

\section{Preliminaries}

We will use $\1$ to denote the vector of all $1$'s, i.e. $\1 = (1,\dots,1)$. Similarly, $\0 := (0,\dots,0)$. We will denote the $i$-th entry of a vector $v$ by $v_i$. For $v,w \in \R^n$, we will use $v \geq w$ when $v_i \geq w_i$, for all $i \in [n]$. For $p \in \C[z_1,\dots,z_n]$ let $\deg(p)$ denote the degree of $p$ and $\deg_j(p)$ denote the degree of $p$ in $z_j$. Also, for $v \in \N^n$, we will use $[\zv^v]_p$ to denote the coefficient of the monomial $\zv^v$ in $p$, where $\zv^v := \prod_{i} z_i^{v_i}$. For a real rooted polynomial $p \in \R[x]$, we will use $\lambda(p)$ to denote the non-increasing vector of the its roots and $\lambda_i(p)$ to denote its $i$-th largest root.

\subsection{Interlacing Families}\label{Subsec:InterlacingFamilies}

\begin{Def}
Two non-increasing sequences $(\alpha_1,\dots,\alpha_m)$ and $(\beta_1,\dots,\beta_n)$ are \textit{interlacing} if they alternate, namely
\begin{align*}
\alpha_1 \geq \beta_1 \geq \alpha_2 \geq \beta_2 \geq \dots \qquad \text{or} \qquad \beta_1 \geq \alpha_1 \geq \beta_2 \geq \alpha_2 \geq \dots,
\end{align*}
in which case we clearly must have $|m - n| \leq 1$. We say that $(\alpha_1,\dots,\alpha_{n-1})$ \textit{interlaces} $(\beta_1,\dots,\beta_n)$ if
\begin{align*}
\beta_1 \geq \alpha_1 \geq \beta_2 \geq \alpha_2 \geq \dots \geq \alpha_{n-1} \geq \beta_n.
\end{align*}

Two real rooted polynomials $p$ and $q$ are \textit{interlacing} if their roots are interlacing and $p$ \textit{interlaces} $q$ if $\deg(p) = \deg(q)-1$ and $\lambda(p)$ interlaces $\lambda(q)$. We also assume that zero interlaces and is interlaced by every real rooted polynomial.
\end{Def}

Polynomials $p_1,\dots,p_k$ of the same degree have a \textit{common interlacer} if there is a polynomial $q$ that interlaces all of them. A fundamental property of polynomials with a common interlacer is as follows.

\begin{Prop}[Lemma 4.2 of \cite{Marcus2015InterlacingI}]\label{Prop:InterlacingSequence}
Let $p_1,\dots,p_k$ be real rooted polynomials of the same degree with positive leading coefficients. If $p_1,\dots,p_k$ have a common interlacer, then their summation, denoted by $p_\emptyset$, is real rooted and there exists $i \in [k]$ such that the largest root of $p_i$ is less than or equal to the largest root of $p_\emptyset$.
\end{Prop}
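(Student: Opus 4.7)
The plan is to analyze the signs of each $p_i$ at the roots of the common interlacer $q$. Let $d$ be the common degree of the $p_i$'s, so $\deg(q) = d-1$, and let $r_1 \geq \cdots \geq r_{d-1}$ denote the roots of $q$. Since $q$ interlaces each $p_i$ and $p_i$ has positive leading coefficient, a direct sign count from the factored form of $p_i$ (using $\lambda_m(p_i) \geq r_m$ for $m \leq j$ and $\lambda_m(p_i) \leq r_{m-1} \leq r_j$ for $m > j$) yields $(-1)^j p_i(r_j) \geq 0$ for every $i \in [k]$ and every $j \in [d-1]$. Summing in $i$, the same sign pattern holds for $p_\emptyset := \sum_i p_i$. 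Combined with $p_\emptyset(x) \to +\infty$ as $x \to +\infty$ and $p_\emptyset(x) \to (-1)^d \cdot \infty$ as $x \to -\infty$ (positive leading coefficient), the intermediate value theorem produces a real root of $p_\emptyset$ in each of the $d$ intervals $(-\infty, r_{d-1}], [r_{d-1}, r_{d-2}], \ldots, [r_1, +\infty)$. Since $\deg(p_\emptyset) \leq d$, these account for all the roots, proving that $p_\emptyset$ is real rooted and that $\m(p_\emptyset) \in [r_1, +\infty)$.

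For the second claim, set $\alpha := \m(p_\emptyset)$ and suppose toward a contradiction that $\m(p_i) > \alpha$ for every $i \in [k]$. By interlacing, the second-largest root of $p_i$ satisfies $\lambda_2(p_i) \leq r_1 \leq \alpha$, so $\alpha$ lies strictly between the two largest roots of $p_i$; together with the positive leading coefficient this forces $p_i(\alpha) < 0$ for every $i$. Summing over $i$ gives $p_\emptyset(\alpha) < 0$, contradicting $p_\emptyset(\alpha) = 0$. Therefore some $i$ satisfies $\m(p_i) \leq \m(p_\emptyset)$, as required.

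The one technical subtlety, which I expect to be the main obstacle, is handling coincidences: when a root of some $p_i$ equals a root of $q$, or when $\alpha$ coincides with $r_1$ or with the second-largest root of some $p_i$. Such configurations turn some of the strict inequalities in the argument above into equalities and must be treated carefully. My plan is to first establish the proposition under the generic assumption that the interlacings are strict (so every sign inequality above is strict), and then pass to the general case by a standard perturbation argument: approximate $q$ by a nearby polynomial $q_\varepsilon$ that strictly interlaces each $p_i$, apply the generic case, and let $\varepsilon \to 0$ invoking continuity of the roots of a real-rooted polynomial in its coefficients.
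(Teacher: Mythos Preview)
The paper does not prove this proposition; it simply cites \cite{Marcus2015InterlacingI}. Your argument is exactly the standard sign-counting proof that appears there, so the overall strategy is sound.

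There is, however, a genuine weak point in your perturbation plan. You propose to replace $q$ by a nearby $q_\varepsilon$ that \emph{strictly} interlaces every $p_i$. Such a $q_\varepsilon$ need not exist: if some $p_i$ has a repeated root $\rho$, then any interlacer of $p_i$ is forced to have $\rho$ as a root, so strict interlacing is impossible. (Concretely, take $p_1(x)=x^2(x-3)$ and $p_2(x)=x^2(x-1)$; every common interlacer has a root at $0$.) Since the conclusion you want does not involve $q$ at all, taking $\varepsilon\to 0$ is not the issue; the issue is that the generic case may simply be unavailable.

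The fix is to dispense with perturbation and finish the degenerate case directly. For real-rootedness of $p_\emptyset$ you can bypass the multiplicity bookkeeping entirely by invoking the ``only if'' direction of \cref{Th:GeneralizedObreshkoff'sTheorem}: a common interlacer forces every convex combination of the $p_i$ to be real rooted, and $p_\emptyset$ is a positive multiple of one. For the second claim, your contradiction argument already gives $p_i(\alpha)\le 0$ for all $i$; since $\sum_i p_i(\alpha)=0$, each $p_i(\alpha)=0$, whence $\alpha=\lambda_2(p_i)=r_1$ for every $i$. But then each $p_i$ is strictly negative on a small interval $(\alpha,\alpha+\eta)$ (it has exactly one root, $\lambda_1(p_i)$, to the right of $\alpha$), so $p_\emptyset$ is strictly negative just to the right of $\alpha$, forcing $\m(p_\emptyset)>\alpha$ and giving the contradiction. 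This two-line addendum replaces the perturbation entirely.
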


The following theorem is often used to prove the existence of a common interlacer.

\begin{Th}[Theorem 2.1 of \cite{Dedieu1992Obreschkoff}]\label{Th:GeneralizedObreshkoff'sTheorem}
Let $p_1,\dots,p_k$ be univariate polynomials of the same degree with positive leading coefficients. Then $p_1,\dots,p_k$ have a common interlacer if and only if all convex combinations of $p_1,\dots,p_k$, namely all $\sum_{i=1}^k \alpha_i p_i$ with $\alpha_i \geq 0$ and $\sum_{i=1}^k \alpha_i = 1$, are real rooted.
\end{Th}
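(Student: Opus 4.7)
I will prove the two directions separately: ``$\Leftarrow$'' by a sign-pattern argument at the roots of the common interlacer, and ``$\Rightarrow$'' by first handling the case $k=2$ and then reducing the general case to it.

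For ``$\Leftarrow$'', let $q$ be a common interlacer of degree $n-1$, where $n = \deg p_i$, and, after an arbitrarily small perturbation, assume its roots $r_1 > \cdots > r_{n-1}$ are simple and disjoint from the roots of each $p_i$. Writing $p_i$ in factored form with roots $a_i^{(1)} \geq \cdots \geq a_i^{(n)}$ and positive leading coefficient, the interlacing inequalities $a_i^{(l)} \geq r_l \geq a_i^{(l+1)}$ imply that $p_i(r_j)$ is a product of $j$ nonpositive factors (from $l \leq j$) and $n-j$ nonnegative factors (from $l \geq j+1$), so $\mathrm{sgn}\, p_i(r_j) = (-1)^j$ independent of $i$. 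Every nonnegative combination $p = \sum_i \alpha_i p_i$ therefore has sign $(-1)^j$ at $r_j$, producing $n-2$ sign changes on the intervals $(r_{j+1},r_j)$; the positive leading coefficient of $p$ adds two more sign changes at $\pm\infty$, yielding the required $n$ real roots.

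For ``$\Rightarrow$'' with $k=2$, let $p,q$ be monic of degree $n$ with roots $a_1 \geq \cdots \geq a_n$ and $b_1 \geq \cdots \geq b_n$, and suppose $p_\alpha := (1-\alpha)p + \alpha q$ is real rooted for every $\alpha \in [0,1]$. A common interlacer exists if and only if $a_j \geq b_{j+1}$ and $b_j \geq a_{j+1}$ for each $j$, for then each interval $[\max(a_{j+1},b_{j+1}),\min(a_j,b_j)]$ is nonempty and supplies a slot for the $j$-th root of the interlacer. Assume for contradiction that, say, $a_j < b_{j+1}$, and perturb so that $p,q$ share no roots. Since $p_\alpha(b_{j+1}) = (1-\alpha)p(b_{j+1})$ is nonzero on $\alpha \in [0,1)$, no root of $p_\alpha$ can cross the level $b_{j+1}$ on $[0,1)$, so the integer count $c(\alpha) := \#\{i : \lambda_i(\alpha) < b_{j+1}\}$ is constant on $[0,1)$. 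Now $c(0) \geq n-j+1$ because $a_j, a_{j+1}, \ldots, a_n < b_{j+1}$, while continuity of the roots as $\alpha \to 1^-$ forces at least $n-j+1$ roots of $q$ into $(-\infty, b_{j+1}]$, contradicting the fact that $q$ has only the $n-j$ such roots $b_{j+1},\ldots, b_n$. The symmetric violation $b_j < a_{j+1}$ is handled identically.

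Passing from $k=2$ to general $k$ is immediate: the hypothesis implies that every pair $(p_i, p_{i'})$ satisfies the $k=2$ assumption, so by the previous step $a_i^{(l)} \geq a_{i'}^{(l+1)}$ for all $i,i',l$. Taking the minimum over $i$ and the maximum over $i'$ yields $\min_i a_i^{(l)} \geq \max_{i'} a_{i'}^{(l+1)}$, so each interval $[\max_{i'} a_{i'}^{(l+1)}, \min_i a_i^{(l)}]$ is nonempty, and choosing one $r_l$ per interval produces a common interlacer. I expect the main obstacle to be the $k=2$ backward direction, where a global hypothesis about real rootedness along a one-parameter family must be converted into a precise geometric statement about how the roots of $p$ and $q$ are arranged, relying on continuity of roots together with a careful bookkeeping of how many roots of $p_\alpha$ can sit below the critical level $b_{j+1}$. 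The passage from $k=2$ to arbitrary $k$ is then almost trivial, resting only on the observation that pairwise nonemptiness of the admissible root intervals is equivalent to nonemptiness of their common intersection.
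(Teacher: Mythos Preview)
The paper does not prove this theorem at all; it is quoted as Theorem~2.1 of Dedieu's paper and used as a black box, so there is no ``paper's own proof'' to compare against. Your sketch is essentially the standard Obreschkoff--Dedieu argument and is correct in outline, but two points deserve attention.

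First, your direction labels are swapped throughout: what you call ``$\Leftarrow$'' (starting from a common interlacer $q$ and deducing real-rootedness of convex combinations via the sign pattern at the $r_j$) is the $\Rightarrow$ direction of the stated equivalence, and vice versa. This is purely notational.

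Second, in the genuinely harder direction (all convex combinations real rooted $\Rightarrow$ common interlacer), you write ``perturb so that $p,q$ share no roots.'' This perturbation is not innocuous: the hypothesis is that \emph{every} convex combination $(1-\alpha)p+\alpha q$ is real rooted, and a generic perturbation of $p$ or $q$ can destroy this for some $\alpha$. The clean fix is not to perturb but to factor out common linear factors: if $p$ and $q$ share a root $r$, then $(x-r)$ divides every $p_\alpha$, and one may pass to $p/(x-r)$, $q/(x-r)$, which still satisfy the hypothesis and have lower degree; iterate until $p$ and $q$ are coprime. With coprimality in hand, $p(b_{j+1})\neq 0$ and your level-crossing argument goes through. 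A related subtlety is that your contradiction at the end requires $b_j>b_{j+1}$ strictly (otherwise $\lambda_j(\alpha)\to b_j=b_{j+1}$ from below is consistent with $\lambda_j(\alpha)<b_{j+1}$); this is handled either by choosing the level $t$ slightly below $b_{j+1}$ and strictly above $a_j$, avoiding all roots of $p$ and $q$, or by a short multiplicity count. These are routine repairs, not gaps in the strategy.
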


Marcus et al. \cite{Marcus2015InterlacingI} generalized \cref{Prop:InterlacingSequence} to ``interlacing families".

\begin{Def}\label{Def:InterlacingFamily}
A family of polynomials with positive leading coefficients is an \textit{interlacing family} if it is possible to attach them to the nodes of a rooted tree in a way that the following conditions hold:
\begin{enumerate}
\item
Each polynomial at a (non-leaf) node is equal to the sum of the polynomials attached to its children.
\item
The polynomials at sibling nodes (nodes with the same parent) have a common interlacer.
\end{enumerate}
\end{Def}

Note that the polynomial attached to the root is automatically equal to the sum of the polynomials attached to the leaves. The following theorem is a generalization of \cref{Prop:InterlacingSequence}.

\begin{Th}[Theorem 4.4 of \cite{Marcus2015InterlacingI}]\label{Th:InterlacingFamilies}
Let $T$ be a rooted tree with root $r$. If univariate polynomials $(p_n)_{n \in T}$ form an interlacing family, then the polynomial attached to the root, denoted $p_r$, is real rooted and there exists a leaf $n \in T$ such that
\begin{align*}
\m ( p_n ) \leq \m ( p_r ).
\end{align*}
\end{Th}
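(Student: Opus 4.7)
The plan is to prove the theorem by top-down descent through the tree, repeatedly invoking \cref{Prop:InterlacingSequence}. The strategy is that at every internal node $v$ with children $c_1, \dots, c_k$, the polynomial $p_v$ is real rooted and at least one child $c_i$ satisfies $\m(p_{c_i}) \le \m(p_v)$; iterating this fact walks us from the root down to a leaf with the required inequality.

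Before the descent, I would first verify that every polynomial in the family is real rooted so that ``$\m$'' is defined throughout. For any node $v$ with children $c_1, \dots, c_k$, the children are siblings and by \cref{Def:InterlacingFamily} share a common interlacer $q$. By the definition of interlacing, the existence of $q$ interlacing $p_{c_i}$ already forces each $p_{c_i}$ to be real rooted, and it also forces the siblings to have equal degree (namely $\deg(q)+1$). Combined with the positive leading coefficients assumed in \cref{Def:InterlacingFamily}, the hypotheses of \cref{Prop:InterlacingSequence} are met, and its conclusion yields that $p_v = \sum_i p_{c_i}$ is real rooted as well. Propagating this from the leaves upward, every $p_n$ is real rooted; in particular, so is $p_r$.

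For the main claim, I would construct inductively a sequence of nodes $r = v_0, v_1, v_2, \dots$ as follows. Given $v_t$, if it is a leaf, the construction halts. Otherwise, \cref{Prop:InterlacingSequence} applied to the children of $v_t$ (whose hypotheses we have just verified) produces a child $c$ with $\m(p_c) \le \m(p_{v_t})$, and we set $v_{t+1} = c$. Since $T$ is finite, the descent terminates at a leaf $n = v_T$, and chaining the inequalities yields
\begin{align*}
\m(p_n) \le \m(p_{v_{T-1}}) \le \dots \le \m(p_{v_0}) = \m(p_r),
\end{align*}
which is the desired conclusion.

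The only real subtlety is this preliminary bootstrapping of real-rootedness; once one notes that a common interlacer automatically certifies that each interlaced polynomial is real rooted and of matching degree, the rest of the argument amounts to a single application of \cref{Prop:InterlacingSequence} per level of the tree, so I do not anticipate any serious obstacle.
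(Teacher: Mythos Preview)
Your proposal is correct and follows the standard argument: iteratively apply \cref{Prop:InterlacingSequence} to descend from the root to a leaf, chaining the root inequalities along the way. Note, however, that the paper does not provide its own proof of this statement; it is quoted from \cite{Marcus2015InterlacingI} and used as a black box, so there is nothing to compare against beyond observing that your argument reproduces the original one from that reference.
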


\subsection{Stable Polynomials}\label{Subsec:StablePolynomials}

Stable polynomials are a natural multivariate generalization of real rooted polynomials. These polynomials have many nice algebraic and geometric properties. In this subsection, we summarize some of these properties that we need for later use. See the surveys \cite{Pemantle2011Hyperbolicity} and \cite{Wagner2011Multivariate} for a thorough overview of this subject.

\begin{Def}
A polynomial $p \in \C[z_1,\dots,z_n]$ is \textit{stable} if
\begin{align*}
\im(z_1)>0 , \dots , \im(z_n)>0 \, \Longrightarrow \, p(z_1,\dots,z_n) \neq 0.
\end{align*}
$p$ is \textit{real stable} if, in addition, its coefficients are real. We use $\Hcal_n(\C)$ and $\Hcal_n(\R)$ to denote the set of $n$-variate stable and real stable polynomials, respectively.
\end{Def}

Note that a univariate real polynomial is (real) stable if and only if it is real rooted. The following proposition is an immediate consequence of the definition.

\begin{Prop}\label{Prop:EquivalentOfStablitiy}
A polynomial $p \in \C[z_1,\dots,z_n]$ is stable (real stable, respectively) if and only if for every $\alpha \in \R^n$ and $v \in \R_+^n$, the univariate polynomial $t \mapsto p(tv + \alpha)$ is stable (real stable, respectively).
\end{Prop}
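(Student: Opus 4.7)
The plan is to verify both directions via the observation that the map $t \mapsto tv + \alpha$ sends the open upper half-plane bijectively onto an affine line that lies entirely in $\mathbb{H}^n$ (for $v \in \R_+^n$ strictly positive, $\alpha \in \R^n$), and sends the real axis to $\R^n$.

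For the forward direction, I would argue as follows. Assume $p$ is stable and fix $\alpha \in \R^n$, $v \in \R_+^n$. For any $t \in \C$ with $\im(t) > 0$, the $j$-th coordinate of $tv + \alpha$ has imaginary part $v_j \, \im(t) > 0$, so $tv + \alpha \in \mathbb{H}^n$ and consequently $p(tv + \alpha) \neq 0$. Hence the univariate polynomial $q(t) := p(tv + \alpha)$ is stable. If in addition $p$ is real stable, then the coefficients of $q$ are explicit polynomial expressions in the real numbers $v_1,\dots,v_n,\alpha_1,\dots,\alpha_n$ with real-coefficient combinations of the coefficients of $p$, and hence are real, so $q$ is real stable.

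For the converse, suppose every restriction $t \mapsto p(tv+\alpha)$ is stable. Assume for contradiction that $p(\zv_0) = 0$ for some $\zv_0 \in \mathbb{H}^n$. Set $\alpha_j := \mathrm{Re}(z_{0,j})$ and $v_j := \im(z_{0,j}) > 0$; then $\alpha \in \R^n$ and $v \in \R_+^n$, and the choice $t = i$ gives $tv + \alpha = \zv_0$, so $q(i) = p(\zv_0) = 0$, contradicting stability of $q$. Thus $p$ is stable. If moreover each $q$ is real stable, then in particular the constant term $q(0) = p(\alpha)$ is real for every $\alpha \in \R^n$; a polynomial whose values on $\R^n$ all lie in $\R$ must have real coefficients (for instance by equating $p$ with its complex conjugate polynomial on the Zariski-dense set $\R^n$). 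Combined with stability, this yields real stability of $p$.

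The argument is essentially structural, so I do not anticipate any genuine obstacle; the only point requiring a little care is the real-coefficient claim in the converse of the real-stable case, where one must remember that stability of the univariate restriction is not by itself enough and one has to separately extract reality of $p$ from the reality of the coefficients of the restrictions, which the constant-term trick handles cleanly.
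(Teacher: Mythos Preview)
Your argument is correct and is exactly the standard verification that the paper has in mind; the paper does not actually write out a proof but simply states that the proposition is ``an immediate consequence of the definition.'' Your proof supplies precisely those details, including the small extra step needed for the real-coefficient claim in the converse direction.
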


Determinantal polynomials are the most important examples of real stable polynomials.

\begin{Prop}[Proposition 1.12 of \cite{Borcea2010Multivariate}]\label{Prop:R.S.P.Example}
If $B \in \Mcal_n(\C)$ is a Hermitian matrix and $A_1,\dots,A_n \in \Mcal_n(\C)$ are positive semi-definite, then the polynomial $\det(B+z_1A_1+\dots+z_mA_m)$ is either identically zero or real stable.
\end{Prop}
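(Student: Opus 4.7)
The plan is to establish two things: that $\det(B + \sum_j z_j A_j)$ has real coefficients, and that it either vanishes identically or has no zeros in $\mathbb{H}^n$. The first claim is essentially automatic: for any $(z_1,\dots,z_n) \in \R^n$, the matrix $B + \sum_j z_j A_j$ is a real linear combination of Hermitian matrices and hence Hermitian, so its determinant is real. A polynomial taking real values on $\R^n$ must have real coefficients.

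For stability I would proceed by a contradiction/dichotomy argument. Suppose $(z_1,\dots,z_n) \in \mathbb{H}^n$ and the matrix $M := B + \sum_j z_j A_j$ is singular; pick a nonzero $v \in \ker M$. Multiplying $Mv = 0$ on the left by $v^*$ gives $v^* B v + \sum_j z_j\, v^* A_j v = 0$. Since $B$ is Hermitian, $v^* B v$ is real, and since each $A_j$ is positive semi-definite, $v^* A_j v$ is a nonnegative real number. Taking imaginary parts yields $\sum_j \im(z_j)\, v^* A_j v = 0$. Each $\im(z_j)$ is strictly positive and each $v^* A_j v$ is nonnegative, so every summand vanishes, forcing $v^* A_j v = 0$ for all $j$. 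Positivity of $A_j$ then gives $A_j v = 0$ for every $j$, and plugging this back into $Mv = 0$ yields $Bv = 0$ as well. Thus $v$ is a common null vector of $B$ and all of $A_1,\dots,A_n$, so it lies in the kernel of $B + \sum_j w_j A_j$ for \emph{every} $w \in \C^n$. Consequently $\det\!\big(B + \sum_j w_j A_j\big)$ vanishes identically as a polynomial in $w$, which is the desired dichotomy: either the polynomial is identically zero, or it has no root in $\mathbb{H}^n$ and is stable.

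No real obstacle is expected; the only mildly clever step is the standard device of passing to the quadratic form $v^*(\cdot) v$ and reading off vanishing from the imaginary part, which is what turns the positivity hypotheses on the $A_j$ into non-vanishing of the determinant on $\mathbb{H}^n$. In fact a minor variant of the same argument would also recover the more general statement where $B$ is merely Hermitian-valued after some change of variables, but for the proposition as stated the route above is the shortest.
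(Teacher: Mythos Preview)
Your argument is correct. Note, however, that the paper does not supply its own proof of this proposition: it is quoted verbatim as Proposition~1.12 of Borcea--Br\"and\'en and used as a black box. So there is no in-paper proof to compare against.

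That said, it is worth remarking on how your route relates to the standard one. The usual textbook proof first perturbs each $A_j$ to $A_j + \varepsilon I$ so that all the $A_j$ become strictly positive definite; then for $z \in \mathbb{H}^n$ the imaginary part $\sum_j \im(z_j) A_j$ is positive definite, which forces $B + \sum_j z_j A_j$ to be invertible. One then lets $\varepsilon \to 0$ and invokes Hurwitz's theorem to conclude that the limit is either stable or identically zero. Your argument bypasses the perturbation step entirely: by extracting a common null vector $v$ of $B, A_1, \dots, A_m$ whenever the determinant vanishes at a single point of $\mathbb{H}^n$, you obtain the ``identically zero or stable'' dichotomy directly. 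This is a genuinely cleaner handling of the semi-definite case and avoids the appeal to Hurwitz altogether.
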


It follows from the above proposition that for every Hermitian matrix $K \in \Mcal_n(\C)$, the polynomial $\det(Z-K)$, where $Z:=\mathrm{Diag}(z_1,\dots,z_n)$, is real stable. This polynomial is the \textit{multivariate characteristic polynomial} of $K$ and we denote it by $\chi[K](\zv)$.

The class of (real) stable polynomials is closed under several elementary operations. Some of these closure properties are summarized in the following proposition. See \cite{Borcea2010Multivariate} and \cite{Borcea2008Application} for the proofs.

\begin{Prop}\label{Prop:R.S.P.Properties}
If $p$ is a (real) stable polynomial in $n$ variables, then
\begin{enumerate}
\item
$\partial_ip$ is identically zero or (real) stable for $i \in [n]$;

\item
$p(z_1,\dots,z_{i-1},\beta,z_{i+1},\dots,z_n)$ is identically zero or (real) stable for $i \in [n]$ and $\beta \in \R$;

\item
$p(z_1,\dots,z_{i-1},z_j,z_{i+1},\dots,z_n)$ is (real) stable for distinct $i,j \in [n]$. In particular $\ovl{p}$ is (real) stable. Recall that $\ovl{p}(x) = p(x,\dots,x)$.

\item
If $p$ is real stable then $z_1^{d_1} \dots z_n^{d_n} \, p(\gamma_1z_1^{-1},\dots,\gamma_nz_n^{-1})$ is real stable for $\pm(\gamma_1,\dots,\gamma_n) \in \R_+^n$.
\end{enumerate}
\end{Prop}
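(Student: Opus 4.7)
The plan is to derive all four closure properties from two analytic tools: Hurwitz's theorem (a locally uniform limit of non-vanishing holomorphic functions is either non-vanishing or identically zero) and the Gauss-Lucas theorem (the roots of $p'$ lie in the convex hull of the roots of $p$). The common thread is that stability is a statement about non-vanishing of $p$ on $\mathbb{H}^n$, so any operation that can be realized as (a limit of) precomposition with a map sending $\mathbb{H}^n$ into $\mathbb{H}^n$ preserves stability, modulo the possibility of the polynomial degenerating to zero.

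For part (1), I would fix arbitrary $z_j \in \mathbb{H}$ for $j \neq i$ and view $p$ as a univariate polynomial in $z_i$. By stability, this polynomial has all its roots in the closed lower half-plane $\overline{-\mathbb{H}}$, so by Gauss-Lucas its derivative $\partial_i p$ also has all its roots in $\overline{-\mathbb{H}}$; therefore $\partial_i p$ does not vanish on $\mathbb{H}^n$ (assuming it is not identically zero as a polynomial in $z_i$, which is dealt with by Hurwitz applied to the full polynomial). Part (2) follows from Hurwitz applied to the sequence $p(\zv', \beta + i\varepsilon, \zv'')$ as $\varepsilon \downarrow 0$: each member is stable by a trivial specialization, so the limit is either identically zero or still non-vanishing on $\mathbb{H}^{n-1}$.

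Part (3) is essentially immediate: if $(z_1, \ldots, \hat{z_i}, \ldots, z_n) \in \mathbb{H}^{n-1}$ and we set $z_i := z_j$, then the resulting $n$-tuple lies in $\mathbb{H}^n$ and so $p$ evaluated there is nonzero; iterating the substitution $z_i \mapsto z_j$ yields stability of $\ovl{p}$. Part (4) is the geometric observation that the Möbius map $z \mapsto 1/z$ sends $\mathbb{H}$ bijectively to $-\mathbb{H}$, so for $\gamma > 0$ the map $z \mapsto \gamma/z$ also sends $\mathbb{H}$ to $-\mathbb{H}$. If all $\gamma_i > 0$, then for $\zv \in \mathbb{H}^n$ the point $(\gamma_1/z_1, \ldots, \gamma_n/z_n)$ lies in $(-\mathbb{H})^n$; real stability of $p$ combined with $p(\ovl{\zv}) = \ovl{p(\zv)}$ implies $p$ has no zeros in $(-\mathbb{H})^n$ either, hence $p(\gamma_1/z_1, \ldots, \gamma_n/z_n) \neq 0$. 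When all $\gamma_i < 0$, the target is already $\mathbb{H}^n$, so the original stability of $p$ suffices. The prefactor $\prod_i z_i^{d_i}$ is nowhere zero on $\mathbb{H}^n$, so multiplying by it preserves non-vanishing and clears denominators to yield a polynomial.

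The main subtleties are bookkeeping rather than depth: one must be careful about the ``identically zero'' alternative in each Hurwitz application (which is precisely why the proposition allows that option), and one must track the sign condition in part (4), since it is exactly the condition that makes $z \mapsto \gamma/z$ map $\mathbb{H}^n$ into one of the two canonical half-plane products where (real) stability guarantees non-vanishing. Mixed signs break this, which is why the hypothesis $\pm(\gamma_1,\dots,\gamma_n) \in \R_+^n$ is sharp for the argument.
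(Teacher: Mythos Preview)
Your proof is correct and follows the standard route (Gauss--Lucas for part~(1), multivariate Hurwitz for part~(2), direct substitution for part~(3), and the M\"obius map $z\mapsto \gamma/z$ together with the conjugation symmetry of real polynomials for part~(4)). The paper does not supply its own proof of this proposition; it simply defers to \cite{Borcea2010Multivariate} and \cite{Borcea2008Application}, where essentially the same arguments appear. One small point worth tightening in part~(1): the degenerate case you flag---where the univariate restriction $z_i\mapsto p(z_1,\dots,z_n)$ might drop degree---in fact cannot occur for $z'\in\mathbb{H}^{n-1}$, because the leading coefficient $a_{d_i}(z')=\lim_{t\to\infty}(it)^{-d_i}p(z',it)$ is itself a Hurwitz limit of nonvanishing functions and hence stable; so Gauss--Lucas applies cleanly at every point of $\mathbb{H}^n$ without a separate limiting argument.
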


In the previous subsection we defined the notion of interlacing for real rooted polynomials. This notion has been generalized by Borcea and Br\"{a}nd\'{e}n to the multivariate case. Let $p,q \in \Hcal_1(\R)$. It is known that if $p$ and $q$ are interlacing, then the Wronskian, defined by $W[p,q] = pq'-p'q$, is either non-negative or non-positive on the real line. We say that $q$ is in \textit{proper position} with respect to $p$, denoted $q \ll p$, if $p$ and $q$ are interlacing and $W[p,q] \leq 0$. Note that if $\deg(q) < \deg(p)$ and they are in proper position, then $\deg(q) = \deg(p)-1$ and $q$ interlaces $p$. Also, if the leading coefficients of $p$ and $q$ have the same sign, then $q \ll p$ if and only if
\begin{align*}
\lambda_1 \geq \gamma_1 \geq \lambda_2 \geq \gamma_2 \geq \dots,
\end{align*}
where $\lambda_1 \geq \lambda_2 \geq \dots$ are the roots of $p$ and $\gamma_1 \geq \gamma_2 \geq \dots$ are the roots of $q$. Also note that $q \ll p$ if and only if $-p \ll q$.

The notion of proper position is generalized as follows.

\begin{Def}\label{Def:ProperPosition}
Let $p,q \in \R[z_1,\dots,z_n]$. We say that $q$ is in \textit{proper position} with respect to $p$, denoted $q \ll p$, if for all $\alpha \in \R^n$ and $v \in \R_+^n$ the univariate polynomial $q(tv+\alpha)$ is in proper position with respect to $p(tv+\alpha)$.
\end{Def}

It follows from \cref{Prop:EquivalentOfStablitiy} and the Hermite-Biehler theorem (see \cite{Rahman2002Analytic}) that $q \ll p$ if and only if $p +i q \in \Hcal_n(\C)$. A well known example of polynomials in proper position is $p$ and $\partial_i p$ for every $p \in \Hcal_n(\R)$ and $i \in [n]$, where we have $\partial_i p \ll p$ (see, e.g., \cite{Borcea2010Multivariate}). An important consequence of the definition is that $q \ll p$ implies $\ovl{q} \ll \ovl{p}$. We will use this fact several times.

A polynomial $p \in \C[z_1,\dots,z_n]$ is called \textit{multi-affine} if $\deg_i(p) \leq 1$ for all $i \in [n]$. The class of multi-affine real stable polynomials are of special importance. In this paper we are mainly interested in such polynomials. In the rest of this subsection, we will prove some results concerning multi-affine real stable polynomials that will be useful for us later on.

\begin{Prop}\label{Prop:ExamplesOfInterlacing}
Let $p \in \Hcal_n(\R)$ be multi-affine and $p = r + z_n s$, where $r,s \in \R[z_1,\dots,z_{n-1}]$. We have $s \ll p$ and $s \ll r$.
\end{Prop}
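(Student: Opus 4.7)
The plan is to invoke the Hermite--Biehler characterization noted immediately after \cref{Def:ProperPosition}: for real polynomials $p, q$ in $n$ variables, $q \ll p$ is equivalent to $p + iq \in \Hcal_n(\C)$. So the task splits into exhibiting the two stability statements $p + is \in \Hcal_n(\C)$ and $r + is \in \Hcal_{n-1}(\C)$.

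First I would handle $s \ll p$. Since $p$ is multi-affine in $z_n$, I can rewrite
\[
p + is \,=\, r + (z_n + i)\, s \,=\, p(z_1, \ldots, z_{n-1},\, z_n + i),
\]
so that $p + is$ is just $p$ evaluated at a last coordinate shifted up by $i$. For any tuple with all imaginary parts positive, $z_n + i$ also has positive imaginary part (in fact exceeding $1$), so the shifted tuple still lies in $\mathbb{H}^n$. Stability of $p$ then gives non-vanishing, hence $p + is \in \Hcal_n(\C)$.

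Next I would turn to $s \ll r$. Since $r, s \in \R[z_1,\ldots,z_{n-1}]$, the statement amounts to $r + is \in \Hcal_{n-1}(\C)$. The same decomposition $p = r + z_n s$ specialized at $z_n = i$ yields
\[
r + is \,=\, p(z_1, \ldots, z_{n-1},\, i),
\]
and for any $(z_1, \ldots, z_{n-1}) \in \mathbb{H}^{n-1}$ the augmented tuple $(z_1, \ldots, z_{n-1}, i)$ belongs to $\mathbb{H}^n$, so stability of $p$ again forces non-vanishing. Thus $r + is$ is stable in $n-1$ variables.

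I do not expect any genuine obstacle here: the entire content is the observation that multi-affinity in $z_n$ allows both $p + is$ and $r + is$ to be read off as evaluations of $p$ at a shifted or specialized last coordinate, after which Hermite--Biehler converts non-vanishing of $p$ on $\mathbb{H}^n$ into the two desired proper-position statements. Degenerate cases (for example $s \equiv 0$) require no separate treatment, since they are absorbed uniformly into the criterion.
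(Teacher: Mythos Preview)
Your proof is correct and essentially the same as the paper's. For $s \ll r$ you and the paper argue identically (specialize $z_n = i$ and apply Hermite--Biehler); for $s \ll p$ the paper simply cites the known fact $\partial_n p \ll p$ and notes $s = \partial_n p$, whereas you give the direct shift argument $p + is = p(\,\cdot\,, z_n + i)$, which is just the multi-affine case of that same fact.
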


\begin{proof}
Since $s = \partial_n p$ and $\partial_n p \ll p$, we have $s \ll p$. Since $p|_{z_n=i}$ is stable and $p|_{z_n=i} = r + is$, we have $s \ll r$.
\end{proof}

For a multi-affine polynomial $p \in \R[z_1,\dots,z_n]$ with $p (\zv) = \sum_{A \subseteq [n]} a_A \, \zv^A$, define its \textit{support}, denoted by $\supp(p)$, to be the set $\big\{A \subseteq [n] : a_A \neq 0\big\}$.

\begin{Prop}[Corollary 3.7 of \cite{Branden2007Polynomials}]\label{Prop:SuppPositiveCoeffs}
Let $p$ be a multi-affine real stable polynomial with non-negative coefficients. If $A \subseteq C \subseteq B$ and $A,B \in \supp(p)$, then $C \in \supp(p)$.
\end{Prop}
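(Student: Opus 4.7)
The plan is a two-step reduction followed by a bivariate degree-tracking argument.

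The first step reduces the problem to the case $A = \emptyset$ and $B = [n]$. Setting $z_i = 0$ for every $i \notin B$ (using \cref{Prop:R.S.P.Properties} part 2) and then applying $\partial^A := \prod_{i \in A} \partial_i$ (\cref{Prop:R.S.P.Properties} part 1) preserves multi-affineness, real stability and non-negativity of coefficients, and straightforward coefficient tracking shows that it suffices to prove the following reduced version: if $p$ is multi-affine real stable with non-negative coefficients and $a_\emptyset, a_{[n]} > 0$, then $a_C > 0$ for every $C \subseteq [n]$.

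The second step reduces this to a bivariate statement. Identifying all variables indexed by $C$ to a single variable $x$ and all variables indexed by $[n] \setminus C$ to $y$, iterated application of \cref{Prop:R.S.P.Properties} part 3 yields a bivariate real stable polynomial
\[
q(x, y) \;=\; \sum_{S \subseteq [n]} a_S \, x^{|S \cap C|} \, y^{|S \setminus C|} \;=\; \sum_{j, k} b_{j, k} \, x^j y^k
\]
with non-negative coefficients and bidegree $(c, d)$, where $c = |C|$ and $d = n - c$. Since only $S = C$ contributes to the monomial $x^c$, we have $b_{c, 0} = a_C$; also $b_{0, 0} = a_\emptyset > 0$ and $b_{c, d} = a_{[n]} > 0$. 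So it suffices to prove that if $q \in \R[x, y]$ is bivariate real stable with non-negative coefficients, bidegree $(c, d)$, and both $b_{0, 0}$ and $b_{c, d}$ are positive, then $b_{c, 0}$ is positive.

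For this bivariate claim, write $q(x, y) = \sum_{k = 0}^d q_k(x) y^k$ with $q_k(x) := \sum_j b_{j, k} x^j$. Three observations combine. First, each $q_k$ is not identically zero: for any fixed $x_0 > 0$ the polynomial $q(x_0, y)$ is real rooted in $y$ with non-negative coefficients and strictly positive constant and leading terms, so its non-positive roots give a factorization into linear factors with positive coefficients, forcing every $q_k(x_0) > 0$. Second, iterating the relation $\partial_y q \ll q$ and restricting at $y = 0$ (which preserves stability, hence proper position) yields the chain $q_d \ll q_{d - 1} \ll \cdots \ll q_1 \ll q_0$. Third, for real rooted $p, q \in \R[x]$ with positive leading coefficients and $q \ll p$, one has $\deg q \leq \deg p$: otherwise, computing the sum of roots of $p + i q$ from its two highest coefficients via Vieta's formula produces a positive imaginary part, contradicting the stability of $p + i q$.

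Combining these, $c = \deg q_d \leq \deg q_{d - 1} \leq \cdots \leq \deg q_0 \leq c$, which forces $\deg q_0 = c$ and hence $b_{c, 0} > 0$. I expect the delicate step to be the third observation, which is the only place that simultaneously uses real stability (to obtain $q_{k+1} \ll q_k$) and non-negativity of coefficients (through positivity of leading coefficients) in order to constrain the relative degrees.
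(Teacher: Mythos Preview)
The paper does not prove this proposition; it is simply quoted as Corollary~3.7 of \cite{Branden2007Polynomials}. So there is no in-paper proof to compare against, and your argument stands as an independent, essentially correct proof.

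The two reductions are clean and the bivariate degree-tracking works. One small refinement in the third observation: your Vieta computation on $p+iq$ only directly rules out the case $\deg q = \deg p + 1$. When $\deg q \geq \deg p + 2$, the two top coefficients of $p+iq$ both come from $iq$, so the sum of the roots is real and no contradiction arises from that alone. This is not an actual gap, though, because the univariate definition of $q \ll p$ used in the paper already includes that $p$ and $q$ interlace, hence $|\deg q - \deg p| \leq 1$. (Alternatively, and perhaps more directly: with positive leading coefficients $a,b$ and $\deg p = n$, $\deg q = m$, the leading term of $W[p,q]=pq'-p'q$ is $ab(m-n)x^{m+n-1}$, so $W[p,q]\leq 0$ forces $m\leq n$.) Either route closes the chain $c=\deg q_d \leq \deg q_{d-1} \leq \cdots \leq \deg q_0 \leq c$, and hence $b_{c,0}=a_C>0$ as you claim.
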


\begin{lemma}\label{Lem:PositiveRootedPolynomialCoefficientSigns}
Let $p \in \R[z_1,\dots,z_n]$ with $p(\zv) = \sum_{A \subseteq [n]} a_A \, \zv^A$ be a multi-affine real stable polynomial and $a_{[n]} > 0$. If all the roots of $\ovl{p}$ are non-negative, then $(-1)^{n-|A|} \, a_A \geq 0$ for every $A \subseteq [n]$.
\end{lemma}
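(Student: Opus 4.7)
The plan is to extract $a_A$ as the constant term of the iterated partial derivative $\tilde p := \partial^A p$, and then to apply Vieta's formula to the univariate polynomial $\ovl{\tilde p}$ to pin down the sign. Since $p$ is multi-affine, $\tilde p$ does not depend on the variables indexed by $A$, and may be regarded as a polynomial in the remaining $n - |A|$ variables. Explicitly, $\tilde p(\zv) = \sum_{B \supseteq A} a_B \, \zv^{B \setminus A}$, whose constant term is $a_A$; thus $a_A = \ovl{\tilde p}(0)$. The coefficient of $\prod_{i \notin A} z_i$ in $\tilde p$ equals $a_{[n]} > 0$, so $\ovl{\tilde p}$ is a univariate polynomial of degree $n - |A|$ with positive leading coefficient.

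The key step is to show that $\ovl{\tilde p}$ has only non-negative roots. I would peel off the indices of $A$ one at a time, producing a sequence $p = p_0, p_1, \ldots, p_{|A|} = \tilde p$ of real stable polynomials with $p_{k+1} = \partial_{j_{k+1}} p_k$. By the fact quoted in the paper that $\partial_i q \ll q$ for every $q \in \Hcal_n(\R)$, each step gives $p_{k+1} \ll p_k$, and since proper position descends to diagonalizations, $\ovl{p_{k+1}} \ll \ovl{p_k}$. The standard interlacing inequalities then guarantee that the smallest root of $\ovl{p_{k+1}}$ is bounded below by the smallest root of $\ovl{p_k}$. Chaining these, the smallest root of $\ovl{\tilde p}$ is at least the smallest root of $\ovl p$, which is non-negative by hypothesis.

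Writing $\ovl{\tilde p}(x) = a_{[n]} \prod_{k=1}^{n - |A|}(x - \tilde r_k)$ with each $\tilde r_k \geq 0$, we obtain
\begin{align*}
a_A = \ovl{\tilde p}(0) = (-1)^{n - |A|} a_{[n]} \prod_{k=1}^{n - |A|} \tilde r_k,
\end{align*}
and therefore $(-1)^{n - |A|} a_A = a_{[n]} \prod_k \tilde r_k \geq 0$, as claimed. The only real point requiring care is the iterated interlacing in the middle step; the remainder is just Vieta's formula for a polynomial with non-negative roots and positive leading coefficient.
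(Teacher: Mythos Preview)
Your proof is correct and follows essentially the same approach as the paper's: both form the chain $\ovl{p} \gg \ovl{\partial^{\{i_1\}}p} \gg \dots \gg \ovl{\partial^A p}$ via $\partial_i q \ll q$ and diagonalization, use the degree drop (from multi-affineness and $a_{[n]}>0$) to conclude that all roots of $\ovl{\partial^A p}$ are non-negative, and then read off the sign of $a_A = \ovl{\partial^A p}(0)$. The only cosmetic difference is that you spell out the Vieta factorization explicitly, whereas the paper states the sign conclusion directly.
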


\begin{proof}
By \cref{Prop:ExamplesOfInterlacing}, for every $A \subseteq [n]$ with $A = \{i_1,\dots,i_k\}$ we have
\begin{align*}
\ovl{p} \gg \ovl{\partial^{\{i_1\}}p} \gg \ovl{\partial^{\{i_1,i_2\}}p} \gg \dots \gg \ovl{\partial^Ap}.
\end{align*}
Since $p$ is multi-affine and $a_{[n]}>0$, the degree of each polynomial in the above sequence is one less than the degree of the polynomial on its left. Therefore, each polynomial interlaces the polynomial on its left. This implies that all the roots of $\ovl{\partial^Ap}$ are non-negative. Since $\ovl{\partial^Ap}$ is of degree $n-|A|$ and its leading coefficient is positive, either $\ovl{\partial^Ap}(0) = 0$ or $(-1)^{n-|A|} \, \partial^Ap(0) \geq 0$. The lemma follows since $\partial^Ap(0) = a_A$.
\end{proof}

We will use the following corollary several times.

\begin{Cor}\label{Cor:SuppPositiveNonnegativeRoots}
Let $p$ be a multi-affine real stable polynomial and all the roots of $\ovl{p}$ be non-negative. If $A \subseteq C \subseteq B$ and $A,B \in \supp(p)$, then $C \in \supp(p)$.
\end{Cor}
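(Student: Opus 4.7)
The plan is to reduce the claim to \cref{Prop:SuppPositiveCoeffs} by transforming $p$ into a multi-affine real stable polynomial with non-negative coefficients and identical support; the sign pattern supplied by \cref{Lem:PositiveRootedPolynomialCoefficientSigns} is exactly what makes this transformation work.

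First, I would assume without loss of generality that $a_{[n]} \geq 0$, replacing $p$ by $-p$ if necessary (which affects neither $\supp(p)$ nor the roots of $\ovl{p}$). In the principal case $a_{[n]} > 0$, \cref{Lem:PositiveRootedPolynomialCoefficientSigns} yields $(-1)^{n-|A|} a_A \geq 0$ for every $A \subseteq [n]$. I would then set
\begin{align*}
q(\zv) := (-1)^n \, p(-z_1, \dots, -z_n),
\end{align*}
so that the coefficient of $\zv^A$ in $q$ is $(-1)^{n-|A|} a_A \geq 0$. The polynomial $q$ is multi-affine with real coefficients, and its real stability follows from the standard observation that, for a real polynomial, stability in $\mathbb{H}^n$ is equivalent via complex conjugation to the absence of zeros in the lower half-plane product, so $p(-\zv)$ has no zeros in $\mathbb{H}^n$. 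Since $|[\zv^A]_q| = |a_A|$, we have $\supp(q) = \supp(p)$, and \cref{Prop:SuppPositiveCoeffs} applied to $q$ produces the desired convex-containment property of $\supp(p)$.

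The delicate case is $a_{[n]} = 0$, to which \cref{Lem:PositiveRootedPolynomialCoefficientSigns} does not directly apply. Here I would pick a $B \in \supp(p)$ of maximum cardinality, restrict $p$ to the variables indexed by $B$ via $p_B(\zv_B) := p|_{z_j = 0,\, j \notin B}$ (which is multi-affine real stable by \cref{Prop:R.S.P.Properties}(2) and has nonzero leading coefficient $a_B$), and apply the previous argument to $p_B$. Every chain $A \subseteq C \subseteq B$ considered in the statement lies inside $B$, so convexity of $\supp(p_B)$ already yields the conclusion. The main technical obstacle is verifying that $\ovl{p_B}(t) = p(t\1_B)$ still has non-negative roots: I would deform $\1$ to $\1_B$ along $v_s := s\1_B + (1-s)\1 \in \R_+^n$ for $s \in [0,1]$, use \cref{Prop:EquivalentOfStablitiy} to keep $p(tv_s)$ real-rooted throughout, and observe that a real root can cross zero only if $p(0) = a_\emptyset$ vanishes --- a situation handled by factoring out the appropriate power of $t$ and iterating the argument on the resulting quotient polynomial.
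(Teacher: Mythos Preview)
Your principal case is exactly the paper's proof: set $q(\zv) = (-1)^n\, p(-\zv)$, use \cref{Lem:PositiveRootedPolynomialCoefficientSigns} to conclude that $q$ has non-negative coefficients, and invoke \cref{Prop:SuppPositiveCoeffs}. The paper's proof consists of precisely those three sentences and does not address the sign normalization or the degenerate case $a_{[n]}=0$ at all; every application of the corollary elsewhere in the paper has $[n]\in\supp(p)$, so the omission is harmless there.

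Your treatment of the degenerate case thus goes beyond the paper. One slip to correct: the symbol $B$ already denotes the top of the given chain $A\subseteq C\subseteq B$, so the set you restrict to must be a maximal element $B^*\in\supp(p)$ \emph{containing} that given $B$, not merely one of maximum cardinality --- otherwise the chain need not lie inside the set you picked, and $\supp(p_{B^*})$ tells you nothing about it. With that fix (and the bookkeeping you already flag for roots escaping to $+\infty$ as the degree of $p(tv_s)$ drops at $s\to 1$), the deformation argument is sound.
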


\begin{proof}
Define $q(\zv) = (-1)^n \, p(-\zv)$. Note that $\supp(p) = \supp(q)$. By \cref{Lem:PositiveRootedPolynomialCoefficientSigns}, $q$ has non-negative coefficients. The result follows from \cref{Prop:SuppPositiveCoeffs}.
\end{proof}

\section{Paving Property for Real Stable Polynomials}\label{Sec:PolynomialPaving}

In \cref{Subsec:ProofOfPolynomialPaving}, we will prove \cref{Th:PolynomialPaving}. We will adapt the arguments that Leake and Ravichandran \cite{Leake2020Mixed} use to prove \cref{Th:LRMatrixPaving}. In \cref{Subsec:ProofOfZeroDiagonalPolynomialPaving}, we will present a slightly modified version of \cref{Th:PolynomialPaving}. We need the second version in the proof of our probabilistic paving property, \cref{Th:S.R.P.PavingEntropyVersion}.

\subsection{Paving Property for Polynomials, First Version}\label{Subsec:ProofOfPolynomialPaving}

Let $\Pcal_r(n)$ denote the set of all partitions of $[n]$ into $r$, possibly empty, subsets. For a polynomial $g \in \R[z_1,\dots,z_n]$ and $\Scal \in \Pcal_r(n)$ with $\Scal = \{S_1,\dots,S_r\}$, define $g_\Scal \in \R[x]$ as
\begin{align*}
g_\Scal = \prod_{i=1}^r \, \ovl{\partial^{S_i^c}g}
\end{align*}
and $g_r \in \R[x]$ as
\begin{align*}
g_r = \sum_{\Scal \, \in \, \Pcal_r(n)} g_\Scal.
\end{align*}

We will prove the following theorem in \cref{Subsec:ProofOfExistenceOfGoodPartition}.

\begin{Th}\label{Th:ExistenceOfGoodPartition}
Let $g$ be as in \cref{Th:PolynomialPaving}. The polynomial $g_r$ is real rooted and there exists a partition $\Scal \in \Pcal_r(n)$ such that
\begin{align*}
\m ( g_\Scal ) \leq \m ( g_r ).
\end{align*}
\end{Th}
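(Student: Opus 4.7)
The plan is to apply the method of interlacing families (\cref{Th:InterlacingFamilies}) to a tree whose leaves correspond to elements of $\Pcal_r(n)$. Identify an ordered partition $\Scal \in \Pcal_r(n)$ with the assignment $\tau : [n] \to [r]$ defined by $\tau(i) = j$ iff $i \in S_j$. Let $T$ be the tree whose depth-$k$ nodes are partial assignments $\sigma : \{1,\dots,k\} \to [r]$, each $\sigma$ has $r$ children (one per choice of $\sigma(k{+}1)$), and the leaves are complete assignments. Attach to $\sigma$ the polynomial
\[
p_\sigma \;:=\; \sum_{\tau \text{ extending } \sigma} g_{\Scal(\tau)},
\]
so that $p_\emptyset = g_r$, $p_\tau = g_{\Scal(\tau)}$ at leaves, and $p_\sigma = \sum_j p_{\sigma_j}$ at every internal node. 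Since $a_\emptyset = 1$, every $\ovl{\partial^{S_j^c(\tau)} g}$ is monic, so each $g_{\Scal(\tau)}$ is monic of degree $n$ and every $p_\sigma$ has a positive leading coefficient.

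To verify the interlacing family property it suffices, by \cref{Th:GeneralizedObreshkoff'sTheorem}, to show that every convex combination $q_\alpha := \sum_j \alpha_j p_{\sigma_j}$ of siblings is real rooted. I realize $q_\alpha$ as the diagonalization of a real stable polynomial in many variables. Introduce $r$ fresh copies $\zv^{(1)}, \dots, \zv^{(r)}$ and set $G_r := \prod_{j=1}^r g(\zv^{(j)})$, which is multi-affine and real stable as a product of real stable polynomials in disjoint sets of variables. Given non-negative weights $\pi_i : [r] \to \R_{\geq 0}$ for $i \in [n]$, define the constant-coefficient differential operator
\[
\Phi_i \;:=\; \sum_{j=1}^r \pi_i(j) \prod_{k \neq j} \partial_{i,k}, \qquad \partial_{i,k} \,:=\, \frac{\partial}{\partial (\zv^{(k)})_i}.
\]
Multi-affineness gives, by direct computation,
\[
\Bigl(\textstyle\prod_{i=1}^n \Phi_i\Bigr) G_r \,\Big|_{\zv^{(1)} = \cdots = \zv^{(r)} = x\1} \;=\; \sum_{\tau : [n] \to [r]} \Bigl(\prod_{i=1}^n \pi_i(\tau(i))\Bigr) g_{\Scal(\tau)}(x),
\]
and taking $\pi_i(j) = \delta_{j, \sigma(i)}$ for $i \leq k$, $\pi_{k+1}(j) = \alpha_j$, and $\pi_i \equiv 1$ for $i > k+1$ specializes the right-hand side to $q_\alpha(x)$.

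The crux, and the expected main obstacle, is to prove that each $\Phi_i$ preserves real stability on polynomials that are multi-affine in $(\zv^{(1)})_i, \dots, (\zv^{(r)})_i$. The key device is a shift trick: when all $\pi_i(j) > 0$, pick $\beta_j := \bigl(\prod_k \pi_i(k)\bigr)^{1/(r-1)}/\pi_i(j) > 0$, so that $\prod_{k \neq j} \beta_k = \pi_i(j)$ for every $j$. For a real stable $f$ that is multi-affine in $(\zv^{(1)})_i, \dots, (\zv^{(r)})_i$, let $\tilde f(s)$ be obtained from $f$ by substituting $(\zv^{(k)})_i \mapsto (\zv^{(k)})_i + s\beta_k$ for each $k$. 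Since $\beta_k \geq 0$, $\tilde f$ is real stable in $s$ together with all the original variables; multi-affineness bounds its $s$-degree by $r$, and a Taylor expansion identifies
\[
\Phi_i f \;=\; \frac{1}{(r-1)!}\, \partial_s^{r-1} \tilde f \,\Big|_{s=0}.
\]
Both $\partial_s^{r-1}$ and evaluation at $s = 0$ preserve real stability by \cref{Prop:R.S.P.Properties}, so $\Phi_i f$ is real stable. If some $\pi_i(j) = 0$, the factor $\partial_{i,j}$ appears in every surviving term of $\Phi_i$ and can be pulled out, reducing to the previous case applied to a smaller operator of the same shape.

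Since multi-affineness in the copies is preserved at every step, iteratively applying the lemma shows that $\bigl(\prod_i \Phi_i\bigr) G_r$ is real stable, and its diagonalization $q_\alpha$ is therefore real rooted by \cref{Prop:R.S.P.Properties}(3). Thus siblings in $T$ share a common interlacer by \cref{Th:GeneralizedObreshkoff'sTheorem}, and $\{p_\sigma\}_{\sigma \in T}$ satisfies \cref{Def:InterlacingFamily}. Invoking \cref{Th:InterlacingFamilies} concludes both that $g_r = p_\emptyset$ is real rooted and that some leaf $\tau$ satisfies $\m(g_{\Scal(\tau)}) \leq \m(g_r)$, which supplies the desired partition $\Scal(\tau) \in \Pcal_r(n)$.
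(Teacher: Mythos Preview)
Your proof is correct and follows the same overall strategy as the paper: the same tree indexed by partial assignments $[k]\to[r]$, the same realization of convex combinations of siblings as diagonalizations of products of operators of the form $\sum_j \pi_i(j)\prod_{k\neq j}\partial_{i,k}$ applied to $G_r=\prod_j g(\zv^{(j)})$, and the same appeal to \cref{Th:GeneralizedObreshkoff'sTheorem} and \cref{Th:InterlacingFamilies}.

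The one substantive difference is how you justify that $\Phi_i$ preserves real stability. The paper simply invokes the Borcea--Br\"and\'en characterization of stability preservers to conclude that the relevant Weyl-type operators are stability preserving (and for general $r$ only sketches this in \cref{Rem:Generalr}). You instead give a self-contained argument: choose $\beta_k>0$ with $\prod_{k\neq j}\beta_k=\pi_i(j)$, observe that the affine substitution $(\zv^{(k)})_i\mapsto(\zv^{(k)})_i+s\beta_k$ preserves stability in the enlarged variable set, and recover $\Phi_i f$ as $\tfrac{1}{(r-1)!}\partial_s^{r-1}\tilde f|_{s=0}$. This is a nice elementary device that avoids citing the heavy classification theorem, and your reduction when some $\pi_i(j)=0$ (factoring out the common $\partial_{i,j}$) is clean. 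The trade-off is that the paper's citation handles all such operators in one stroke, whereas your argument is tailored to this specific elementary-symmetric shape---but that is exactly what is needed here.
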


To prove this theorem, we will show that there exists an interlacing family such that the polynomials in $\{g_\Scal : \Scal \in \Pcal_r(n)\}$ are attached to the leaves. See \cref{Subsec:InterlacingFamilies} for an overview of interlacing families.

Note that for $\Scal \in \Pcal_r(n)$ with $\Scal = \{S_1,\dots,S_r\}$, we have
\begin{align*}
\m ( g_\Scal ) = \max_{i \in [r]} \Big( \m \big( \, \ovl{\partial^{S_i^c}g} \, \big) \Big).
\end{align*}
Therefore, for the partition $\{S_1,\dots,S_r\} \in \Pcal_r(n)$ given by \cref{Th:ExistenceOfGoodPartition}, we have
\begin{align*}
\forall i \in [r] \ : \ \m \big( \, \ovl{\partial^{S_i^c}g} \, \big) \leq \m ( g_r ).
\end{align*}
Thus to prove \cref{Th:PolynomialPaving}, it is sufficient to show that
\begin{align}\label{Eq:UpperBoundForMaxrootOfg_r}
\m(g_r) \leq \bigg( \sqrt{\dfrac{1}{r} - \dfrac{\alpha}{r-1}} + \sqrt{\alpha} \bigg)^2.
\end{align}
We will prove this inequality in \cref{Subsec:ProofOfUpperBoundForMaxrootOfg_r} using Leake-Ravichandran's version of ``multivariate barrier method" introduced in \cite{Leake2020Mixed}. This version of the barrier method gives (upper) bounds for the largest root of partial derivatives of a stable polynomial. The following proposition provides such an expression for $g_r$.

\begin{Prop}\label{Prop:CombinatorialExpressionOfg_r}
If $g$ is as in \cref{Th:PolynomialPaving}, then
\begin{align*}
g_r(x) = \bigg( \dfrac{1}{(r-1)!} \bigg)^n \, \Bigg[ \bigg( \prod_{i=1}^n \partial_i^{r-1} \bigg) g(\zv)^r \Bigg] \Bigg|_{\zv = x\1}.
\end{align*}
\end{Prop}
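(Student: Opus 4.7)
The plan is to introduce one independent copy of the variables for each of the $r$ factors of $g(\zv)^r$, apply the chain rule followed by the multinomial theorem, and then exploit the multi-affinity of $g$ to collapse the resulting expansion to the sum over ordered partitions defining $g_r$.

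First I would set $G(\zv^{(1)},\dots,\zv^{(r)}) := \prod_{k=1}^r g(\zv^{(k)})$, a polynomial in $nr$ variables, so that $g(\zv)^r$ is the restriction of $G$ to the diagonal $\zv^{(1)} = \dots = \zv^{(r)} = \zv$. By the chain rule, applying $\partial_i$ to this diagonal restriction agrees with applying $\sum_{k=1}^r \partial/\partial z_i^{(k)}$ to $G$ and then restricting. Iterating this observation and multiplying over all $i \in [n]$ yields
\begin{align*}
\Bigl(\prod_{i=1}^n \partial_i^{r-1}\Bigr) g(\zv)^r \;=\; \biggl[\prod_{i=1}^n \Bigl(\sum_{k=1}^r \tfrac{\partial}{\partial z_i^{(k)}}\Bigr)^{r-1} G\biggr]\bigg|_{\zv^{(1)} = \dots = \zv^{(r)} = \zv}.
\end{align*}

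Next I would expand each factor $\bigl(\sum_k \partial/\partial z_i^{(k)}\bigr)^{r-1}$ via the multinomial theorem. Since $g$ is multi-affine in $\zv^{(k)}$, any occurrence of $(\partial/\partial z_i^{(k)})^2$ annihilates $g(\zv^{(k)})$, so only terms whose exponents $a_k^i$ all lie in $\{0,1\}$ survive. The constraint $\sum_k a_k^i = r-1$ then forces exactly one $a_k^i$ to equal $0$ (and the remaining $r-1$ to equal $1$), giving multinomial coefficient $(r-1)!$. Writing $\phi(i) \in [r]$ for this distinguished vanishing index and combining across all $i \in [n]$, the $n$ multinomial coefficients merge into the global factor $(r-1)!^n$ and the whole expression becomes a single sum indexed by maps $\phi: [n] \to [r]$.

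Finally I would identify the $\phi$-summand: the surviving derivative acting on the $k$-th factor $g(\zv^{(k)})$ is exactly $\partial^{\phi^{-1}(k)^c}$, so restricting to the diagonal and then setting $\zv = x\1$ turns the $k$-th factor into $\ovl{\partial^{\phi^{-1}(k)^c} g}(x)$ and the entire product into $g_\Scal(x)$ for the ordered partition $\Scal = \{\phi^{-1}(1),\dots,\phi^{-1}(r)\} \in \Pcal_r(n)$. Summing over $\phi$ (equivalently, over $\Scal$) produces $(r-1)!^n g_r(x)$, whence dividing by $(r-1)!^n$ gives the stated identity. The only real obstacle I foresee is the combinatorial bookkeeping --- keeping track of the bijection between surviving multi-indices and elements of $\Pcal_r(n)$ and confirming that the multinomial coefficient is exactly $(r-1)!$; the crucial algebraic input, doing all the real work, is the multi-affinity of $g$, which is what collapses a potentially large multinomial expansion down to the clean sum over $r^n$ ordered partitions.
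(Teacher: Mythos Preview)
Your proposal is correct and follows essentially the same approach as the paper. The paper applies the product rule directly to $g(\zv)^r$ (without introducing separate variable copies) and uses multi-affinity to conclude that the surviving terms are indexed by $r$-tuples $(A_1,\dots,A_r)$ of subsets of $[n]$ in which each element appears exactly $r-1$ times, i.e.\ $(A_1^c,\dots,A_r^c)\in\Pcal_r(n)$; your introduction of the auxiliary variables $\zv^{(1)},\dots,\zv^{(r)}$ and the chain-rule identity is simply a more explicit packaging of that same product-rule computation, and your map $\phi$ is the same data as the paper's tuple $(A_1,\dots,A_r)$ via $A_k=\phi^{-1}(k)^c$.
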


\begin{proof}
We proceed as in the proof of \cite[Lemma 3.1]{Leake2020Mixed}. Using the product rule and the assumption that $g$ is multi-affine, we get
\begin{align*}
\bigg( \prod_{i=1}^n \partial_i^{r-1} \bigg) g(\zv)^r = \big( (r-1)! \big)^n \!\!\! \sum_{(A_1,\dots,A_r) \in \Acal} \; \prod_{i=1}^r \big( \partial^{A_i} g(\zv) \big),
\end{align*}
where $\Acal$ is the collection of all $r$-tuples $(A_1,\dots,A_r)$ of subsets of $[n]$ such that each element of $[n]$ occurs exactly in $r-1$ of $A_i$'s. This is equivalent to $A_1^c,\dots,A_r^c$ being a partition of $[n]$, namely $(A_1^c,\dots,A_r^c) \in \Pcal_r(n)$. Therefore,
\begin{align*}
\bigg( \dfrac{1}{(r-1)!} \bigg)^n \, \Bigg[ \bigg( \prod_{i=1}^n \partial_i^{r-1} \bigg) g(\zv)^r \Bigg]
&= \sum_{(S_1,\dots,S_r) \, \in \, \Pcal_r(n)} \; \prod_{i=1}^r \big( \partial^{S_i^c} g(\zv) \big),
\end{align*}
from which the proposition follows by setting $\zv = x \1$.
\end{proof}

\subsubsection{Interlacing Families: Proof of \texorpdfstring{\cref{Th:ExistenceOfGoodPartition}}{Theorem 3.1}}\label{Subsec:ProofOfExistenceOfGoodPartition}

We proceed as in Section 2 of \cite{Leake2020Mixed}. We will present an interlacing family of polynomials in which the set of leaf-polynomials is $\{g_\Scal : \Scal \in \Pcal_r(n)\}$. Then, \cref{Th:ExistenceOfGoodPartition} follows from \cref{Th:InterlacingFamilies} since, by definition, for such an interlacing family the polynomial attached to the root will automatically be $g_r$. For the sake of simplicity, we will only demonstrate the case $r=2$. The general case is similar and will be briefly discussed in \cref{Rem:Generalr}.

We claim that the following family of polynomials is the appropriate interlacing family.

\begin{Def}\label{Def:OurInterlacingFamily}
Let $T$ be a perfect binary tree with height $n$, namely $T$ is a rooted tree with height $n$ such that each node has exactly two children. Index the nodes of $T$ as follows:
\begin{itemize}
\item
For each $k \in [n]$, the nodes with depth $k$ are indexed by (ordered) partitions of $[k]$ into two subsets, namely ordered pairs $(S,T)$ with $S \sqcup T = [k]$.\footnote{We use the notation $S \sqcup T$ to stress that the sets $S$ and $T$ are disjoint.} The root is indexed by $\emptyset$.
\item
For $k=0,\dots,n-1$, the children of a node $(S,T)$ at depth $k$ are $\big(S\cup\{k+1\} , T\big)$ and $\big(S , T\cup\{k+1\}\big)$.
\end{itemize}
Now, denote the polynomial attached to the node $(S,T)$ at depth $k$ by $q_k(S,T)$. For each leaf $(S,T)$ with $S \sqcup T = [n]$, set
\begin{align*}
q_n(S,T) = g_{\{S,T\}} = \big(\, \ovl{\partial^{S^c}g} \,\big) \big(\, \ovl{\partial^{T^c}g} \,\big).
\end{align*}
The polynomials attached to the other nodes are set to be equal to the sum of the polynomials attached to their children.
\end{Def}

We can compute the attached polynomials. For a node $(S,T)$ at level $k$, we have 
\begin{align}\label{Eq:q_k}
q_k(S,T)(x)
&= \sum_{U \sqcup V = [k+1,n]} q_n(S \sqcup U , T \sqcup V)(x) \nonumber \\
&= \sum_{U \sqcup V = [k+1,n]} \ovl{\partial^{(S \sqcup U)^c} g}(x) \ \ovl{\partial^{(T \sqcup V)^c} g}(x)  \nonumber \\
&= \Bigg[ \sum_{U \sqcup V = [k+1,n]} \partial_\zv^{(T \sqcup V)^c} \partial_\yv^{(S \sqcup U)^c} \, g(\zv) g(\yv)  \Bigg] \Bigg|_{\zv=\yv=x\1} \nonumber \\
&= \Bigg[ \sum_{U \sqcup V = [k+1,n]} \partial_\zv^{S \sqcup U} \partial_\yv^{T \sqcup V} \, g(\zv) g(\yv)  \Bigg] \Bigg|_{\zv=\yv=x\1} \nonumber \\
&= \Bigg[ \partial_\zv^S \partial_\yv^T \sum_{U \sqcup V = [k+1,n]} \partial_\zv^U \partial_\yv^V \, g(\zv) g(\yv)  \Bigg] \Bigg|_{\zv=\yv=x\1} \nonumber \\
&= \Bigg[ \partial_\zv^S \partial_\yv^T \prod_{i=k+1}^n \Big( \partial_{z_i}+\partial_{y_i} \Big) \, g(\zv) g(\yv)  \Bigg] \Bigg|_{\zv=\yv=x\1},
\end{align}
where $\zv = (z_1,\dots,z_n)$ and $\yv = (y_1,\dots,y_n)$.

\begin{lemma}\label{Lem:OurInterlacingFamilyIsInterlacing}
The family of polynomials given by \cref{Def:OurInterlacingFamily} is an interlacing family.
\end{lemma}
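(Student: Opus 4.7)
The plan is to verify the two conditions of \cref{Def:InterlacingFamily} for the tree built in \cref{Def:OurInterlacingFamily}. Condition~1 is forced by construction, since every non-leaf polynomial is defined as the sum of its children. Before turning to condition~2, I would record that each $q_k(S,T)$ has positive leading coefficient: from $\ovl{\partial^A g}(x) = \sum_{B \subseteq A^c} a_B\,x^{n-|A|-|B|}$ together with the hypothesis $a_\emptyset = 1$, every $\ovl{\partial^A g}$ is monic of degree $n-|A|$, so each leaf polynomial $q_n(S,T)$ is monic of degree $n$, and each $q_k(S,T)$ is a sum of such polynomials, hence has degree $n$ with positive leading coefficient. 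Consequently any two siblings satisfy the hypotheses of \cref{Th:GeneralizedObreshkoff'sTheorem}, so for condition~2 it suffices to check that all convex combinations of the children are real rooted.

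Fix a node $(S,T)$ at depth $k<n$; its two children differ only in whether $\partial_{z_{k+1}}$ or $\partial_{y_{k+1}}$ appears in the formula \eqref{Eq:q_k}. Hence for $\alpha \in [0,1]$ the convex combination can be written as
\begin{align*}
\alpha\,q_{k+1}(S\cup\{k+1\},T) + (1-\alpha)\,q_{k+1}(S,T\cup\{k+1\}) = \Bigg[\,\partial_\zv^S\,\partial_\yv^T\,\big(\alpha\,\partial_{z_{k+1}} + (1-\alpha)\,\partial_{y_{k+1}}\big)\prod_{i=k+2}^{n}\!\big(\partial_{z_i} + \partial_{y_i}\big)\,g(\zv)\,g(\yv)\,\Bigg]\Bigg|_{\zv=\yv=x\1},
\end{align*}
and the task reduces to showing this polynomial is real rooted.

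The idea is to read the bracketed expression as a chain of real-stability-preserving operations applied to $g(\zv)g(\yv)$, which is itself real stable in the $2n$ variables $(\zv,\yv)$ since products of real stable polynomials are real stable. Pure derivatives preserve real stability by \cref{Prop:R.S.P.Properties}(1), and the successive identifications $y_i \mapsto z_i$ followed by $z_i \mapsto x$ that implement $\zv=\yv=x\1$ preserve it by \cref{Prop:R.S.P.Properties}(3), leaving a univariate real stable polynomial, which is the same as a real rooted one. The one step not directly packaged in \cref{Prop:R.S.P.Properties}, and the one I expect to be the main obstacle, is that an operator of the form $\lambda\,\partial_{z_i} + \mu\,\partial_{y_i}$ with $\lambda,\mu \ge 0$ also preserves real stability. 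To handle it, I would introduce an auxiliary variable $t$ and note that replacing $z_i$ by $z_i+\lambda t$ and $y_i$ by $y_i+\mu t$ in a real stable $h$ produces a real stable polynomial in the $2n+1$ variables $(\zv,\yv,t)$ straight from the definition (any point with positive imaginary parts maps into $\mathbb{H}^{2n}$ because $\lambda,\mu\ge 0$); then \cref{Prop:R.S.P.Properties}(1) applied to $\partial_t$ and \cref{Prop:R.S.P.Properties}(2) applied at $t=0$ recover $(\lambda\partial_{z_i}+\mu\partial_{y_i})h$ as a real stable polynomial. With this lemma in hand, every convex combination is real rooted, so \cref{Th:GeneralizedObreshkoff'sTheorem} supplies a common interlacer for each pair of siblings, verifying condition~2 of \cref{Def:InterlacingFamily}.
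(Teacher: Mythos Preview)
Your proposal is correct and follows the same route as the paper: condition~1 is by construction, and condition~2 reduces via \cref{Th:GeneralizedObreshkoff'sTheorem} to real-rootedness of convex combinations, which is then checked by reading the expression as a chain of stability-preserving operations on $g(\zv)g(\yv)$. The only difference is that the paper handles the operator $\alpha\,\partial_{z_k}+(1-\alpha)\,\partial_{y_k}$ by citing the Borcea--Br\"and\'en characterization of stability preservers, whereas you give a self-contained auxiliary-variable argument; you are also more explicit than the paper about the positive leading coefficients needed for \cref{Def:InterlacingFamily}.
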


\begin{proof}
The first condition in the definition of interlacing families (\cref{Def:InterlacingFamily}) is satisfied by construction. We show that the polynomials attached to the children of each node have a common interlacer. Let $k \in \{1,\dots,n\}$ and $(S,T)$ with $S \sqcup T = [k-1]$, be a node at level $k-1$. This node's children are $\big(S\sqcup\{k\},T\big)$ and $\big(S,T\sqcup\{k\}\big)$. By \cref{Th:GeneralizedObreshkoff'sTheorem}, it is sufficient to show that for every $0 \leq \alpha \leq 1$, the polynomial
\begin{align*}
\alpha \, q_k \big(S\sqcup\{k\},T\big) + (1-\alpha) \, q_k \big(S,T\sqcup\{k\}\big)
\end{align*}
is real rooted. Denote the above polynomial by $p_\alpha$. By \eqref{Eq:q_k}, we have
\begin{align*}
p_\alpha(x) 
&= \alpha \, q_k \big(S\sqcup\{k\},T\big)(x) + (1-\alpha) \, q_k \big(S,T\sqcup\{k\}\big)(x) \\
&= \Bigg[ \big(\alpha \partial_{z_k} + (1-\alpha) \partial_{y_k}\big) \, \partial_\zv^S \partial_\yv^T \prod_{i=k+1}^n \Big( \partial_{z_i}+\partial_{y_i} \Big) \, g(\zv) g(\yv)  \Bigg] \Bigg|_{\zv=\yv=x\1}.
\end{align*}
By the characterization of stability preserving operators in \cite{Borcea2010Multivariate}, for every $a,b \in \R_{\geq 0}$ and $j \in [n]$, the Weyl operator $a\partial_{z_j}+b\partial_{y_j}$ is real stability preserving. Also, by parts 1 and 3 of \cref{Prop:R.S.P.Properties}, the differential operator $\partial_\zv^S \partial_\yv^T$ and diagonalization of polynomials are real stability preserving. Therefore, since $g(\zv)g(\yv)$ is real stable, $p_\alpha$ is real stable and thus real rooted.
\end{proof}

We proved that the family introduced in \cref{Def:OurInterlacingFamily} is an interlacing family in which the set of leaf-polynomials is $\{g_\Scal : \Scal \in \Pcal_2\}$.  Now, it follows from \cref{Th:InterlacingFamilies} that there exists a $\Scal \in \Pcal_2$ such that $\m ( g_\Scal ) \leq \m ( g_2 )$. This completes the proof of \cref{Th:ExistenceOfGoodPartition}.

\begin{Rem}\label{Rem:Generalr}
For the general $r$, the interlacing family is similar to \cref{Def:OurInterlacingFamily}, with the difference that each node at level $k$ corresponds to a $r$-tuple $(S_1,\dots,S_r)$ with $S_1 \sqcup \dots \sqcup S_r = [k]$, its children are
\begin{align*}
\big(S_1 \cup \{k+1\},\dots,S_r\big),\dots,\big(S_1,\dots,S_r \cup \{k+1\}\big),
\end{align*}
and for each leaf with label $\Scal \in \Pcal_r(n)$, we have $q_n(\Scal) = g_\Scal$. We can compute the attached polynomials similar to \eqref{Eq:q_k}: for a node $(S_1,\dots,S_r)$ at level $k$,
\begin{align*}
q_k(S_1,\dots,S_r)(x) = \Bigg[ \partial_{\zv_1}^{[k] \backslash S_1} \dots \partial_{\zv_r}^{[k] \backslash S_r} \prod_{i=k+1}^n \! \Delta_i \ g(\zv_1) \dots g(\zv_r)  \Bigg] \Bigg|_{\zv_1=\dots=\zv_r=x\1},
\end{align*}
with $\Delta_i := \partial_{\zv_{1i}} \dots \partial_{\zv_{(r-1)i}} + \partial_{\zv_{1i}} \partial_{\zv_{3i}} \dots \partial_{\zv_{ri}} + \dots + \partial_{\zv_{2i}} \dots \partial_{\zv_{ri}}$. Also using the characterization of stability preservers in \cite{Borcea2010Multivariate}, we can prove a result similar to \cref{Lem:OurInterlacingFamilyIsInterlacing}.
\end{Rem}

\subsubsection{The Barrier Method: Upper Bound for \texorpdfstring{$\boldsymbol{\m(g_r)}$}{maxroot(gr)}}\label{Subsec:ProofOfUpperBoundForMaxrootOfg_r}

Now, we prove the inequality \eqref{Eq:UpperBoundForMaxrootOfg_r}. We proceed as in Section 4 of \cite{Leake2020Mixed}.

\begin{Def}
Given a real stable polynomial $p \in \Hcal_n(\R)$ and a point $u \in \R^n$ with $p(u) \neq 0$, the \textit{barrier function in the direction} $i$ at $u$, denoted $\Phi_p^i(u)$, is defined by
\begin{align*}
\Phi_p^i(u) = \dfrac{\partial_ip}{p}(u).
\end{align*}
\end{Def}

\begin{Def}
Let $p \in \R[z_1,\dots,z_n]$. A point $u \in \R^n$ is \textit{above the roots} of $p$ if
\begin{align*}
\forall \, w \geq u \ : \ p(w) \neq 0.
\end{align*}
We will use $\ab_p$ to denote the set of all the points above the roots of $p$.
\end{Def}

The idea behind the barrier method is that the evolution of the above the roots of a real stable polynomial under simple differential operators is governed by the barrier functions. For example, it is known that for every $u \in \ab_p$ and $i \in [n]$, we have $\Phi_p^i(u) > 0$ (for a proof of this fact see \cite{Tao2013Real}). In particular, $\partial_i p(u) \neq 0$, from which it follows that $\ab_p \subseteq \ab_{\partial_i p}$.

Let $g$ satisfy the assumptions of \cref{Th:PolynomialPaving}. Because of the expression give in \cref{Prop:CombinatorialExpressionOfg_r} for $g_r$, to prove \eqref{Eq:UpperBoundForMaxrootOfg_r} it is sufficient to show that
\begin{align*}
\bigg( \sqrt{\dfrac{1}{r} - \dfrac{\alpha}{r-1}} + \sqrt{\alpha} \bigg)^2 \; \1 \in \ab_{\partial_n^{r-1} \dots \partial_1^{r-1} g^r}.
\end{align*}
To prove this, we will begin from a point above the roots of $g^r$ and follow its evolution under iterative applications of operators $\partial_1^{r-1},\dots,\partial_n^{r-1}$ on $g^r$ to obtain a point above the roots of $\partial_n^{r-1} \dots \partial_1^{r-1} g^r$. In each iteration, the point will move back along one of the axis and we will estimate its displacements in terms of the barrier functions. We will also need a control on the behavior of the barrier functions during this process. These are done in \cref{Prop:Delta->Phi} and \cref{Prop:!->delta}.

We will use the following lemma to ensure that a point stays above the roots when we move it along one of the axis.

\begin{lemma}\label{Lem:AbLemma}
Let $p \in \Hcal_n(\R)$, $u \in \ab_p$ and $v \in \R_{\geq0}^n$. If  $p(u-tv) \neq 0$ for all $t \in [0,1]$, then $u-v \in \ab_p$.
\end{lemma}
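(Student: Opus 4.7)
The plan is a topological continuation argument along the segment from $u$ to $u-v$. I would set
\[
A := \{ t \in [0,1] : u - tv \in \ab_p \},
\]
observe that $0 \in A$ since $u \in \ab_p$, and then show that $A$ is both open and closed in $[0,1]$. Since $[0,1]$ is connected, this will force $A = [0,1]$, and in particular $1 \in A$, which is precisely the conclusion $u - v \in \ab_p$.

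Openness is easy: it is a standard fact that $\ab_p$ is an open subset of $\R^n$ (one way to see this is that its complement is $(V(p)\cap \R^n) + \R_{\leq 0}^n$, which is closed using that on any non-negative direction the univariate polynomial $s \mapsto p(c+sy)$ has only finitely many real roots). Since $t\mapsto u-tv$ is continuous, $A$ is then the preimage of an open set.

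Closedness is the heart of the argument. Suppose $t_n\to t^*$ with every $t_n\in A$, and assume for contradiction that $t^*\notin A$. Then there exists $y\in \R_{\geq 0}^n$ with $p(u-t^*v+y)=0$. Since $p(u-t^*v)\neq 0$ by the hypothesis on the segment, we must have $y\neq 0$. Consider the univariate polynomials
\[
q_n(s) = p(u - t_n v + sy), \qquad q(s) = p(u - t^* v + sy).
\]
By real stability combined with $y\geq 0$ (via \cref{Prop:EquivalentOfStablitiy}; for $y$ with some vanishing entries one approximates by $y+\varepsilon\1$ and passes to the limit), each $q_n$ and $q$ is real rooted, and $q\not\equiv 0$ since $q(0)=p(u-t^*v)\neq 0$. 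Because $u-t_nv\in\ab_p$ and $sy\geq 0$ for $s\geq 0$, every real root of $q_n$ is strictly negative. The coefficients of $q_n$ are polynomial functions of $t_n$ and hence converge to those of $q$, so by continuity of polynomial roots every real root of $q$ is a limit of real roots of $q_n$ and is therefore $\leq 0$. But $q(1)=p(u-t^*v+y)=0$ exhibits $1$ as a positive real root of $q$, a contradiction.

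The main obstacle I anticipate is executing the continuity-of-roots step rigorously, particularly handling a possible drop in degree from $q_n$ to $q$: one must ensure that any ``missing'' roots of $q$ relative to $q_n$ escape to $-\infty$ rather than materialize as new positive real roots of $q$. This is a routine but non-trivial continuity argument. The openness of $\ab_p$ is a secondary, essentially standard, bookkeeping point that can be invoked directly from the barrier-method literature.
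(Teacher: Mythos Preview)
Your argument is correct, and it takes a genuinely different route from the paper's. The paper proves the lemma by passing to the homogenization $p_H$ and invoking the theory of hyperbolicity cones: it perturbs $v$ to a strictly positive direction, shows that $(u-v,1)$ lies in the hyperbolicity cone $C_{(v+w,0)}(p_H)$, deduces $u-v\in\ovl{\ab_p}$ via \cref{Cor:AbHyperbolicityCone}, and then upgrades this to $u-v\in\ab_p$ using a separate boundary lemma (\cref{Lem:BoundaryOfAb}) whose proof in turn relies on G\aa rding's convexity of hyperbolicity cones. Your open--closed continuation argument sidesteps all of this machinery: once one accepts that $\ab_p$ is open (which you rightly flag as standard), the closedness step is handled by a clean Hurwitz-type argument on the one-variable restrictions $q_n$, and the degree-drop worry you raise is harmless precisely because Hurwitz only needs the direction ``every root of the limit is approximated by roots of the sequence,'' not the converse. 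The trade-off is that your proof is more self-contained and elementary, while the paper's proof makes explicit the underlying geometric reason the lemma holds (convexity of the hyperbolicity cone), which is conceptually illuminating and reusable elsewhere in the paper. Note, incidentally, that the openness of $\ab_p$ is itself most cleanly seen via the hyperbolicity-cone picture, so the two approaches are not as far apart in their foundations as they first appear.
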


Our proof for the above lemma relies on results from the theory of hyperbolic polynomials and, to keep the continuity, we have deferred it to \cref{App:HyperbolicPolynomials}. Roughly speaking, the above result holds because above the roots of a real stable polynomial is a convex set.

\begin{Prop}\label{Prop:Delta->Phi}
Let $j \in [n]$ and $p \in \Hcal_n(\R)$ be a real stable polynomial of degree at most $r$ in $z_j$. If $u \in \ab_p$ and $\delta$ satisfies
\begin{align}\label{Eq:Delta->PhiDeltaIneq.}
\Bigg( \partial_i \bigg( \dfrac{\partial_j^{r-1} p}{p} \bigg) (u) \Bigg) \Bigg( \partial_i \bigg( \dfrac{\partial_j^r p}{p} \bigg) (u) \Bigg)^{-1} \leq \delta < \bigg( \Phi_{\partial_j^{r-1}p}^j(u) \bigg)^{-1}
\end{align}
for some $i \in [n]$, then $u - \delta e_j \in \ab_{\partial_j^{r-1} p}$ and
\begin{align}\label{Eq:Delta->PhiPhiIneq.}
\Phi_{\partial_j^{r-1} p}^i (u - \delta e_j) \leq \Phi_p^i (u).
\end{align}
\end{Prop}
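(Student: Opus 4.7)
The plan is to exploit the hypothesis $\deg_j p \leq r$, which makes $q := \partial_j^{r-1} p$ affine in $z_j$ with leading coefficient $s := \partial_j^r p$ (and $s$ itself independent of $z_j$). This reduces everything to essentially affine computations in the distinguished variable, after which the two conclusions are an application of \cref{Lem:AbLemma} and an algebraic manipulation, respectively.

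First I would handle the easier half, the membership $u - \delta e_j \in \ab_q$. Iterating the inclusion $\ab_p \subseteq \ab_{\partial_k p}$ from the preamble gives $u \in \ab_q$. Affinity of $q$ in $z_j$ yields
\[
q(u - t e_j) = q(u) - t\,s(u) = q(u)\bigl(1 - t\,\Phi_q^j(u)\bigr),
\]
which stays strictly positive for every $t \in [0, \delta]$ because of the hypothesized upper bound $\delta < 1/\Phi_q^j(u)$. \cref{Lem:AbLemma} applied with $v = e_j$ then delivers $u - \delta e_j \in \ab_q$.

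For the barrier inequality \eqref{Eq:Delta->PhiPhiIneq.}, affinity again allows a direct substitution to give
\[
\Phi_q^i(u - \delta e_j) \;=\; \frac{\partial_i q(u) - \delta\,\partial_i s(u)}{q(u) - \delta\,s(u)}.
\]
Cross-multiplying the target inequality against the positive quantities $p(u)$ and $q(u) - \delta s(u)$, and invoking the quotient-rule identities $p\,\partial_i q - q\,\partial_i p = p^2\,\partial_i(q/p)$ and $p\,\partial_i s - s\,\partial_i p = p^2\,\partial_i(s/p)$, the inequality reduces to a linear bound on $\delta$ expressed in terms of $\partial_i(q/p)(u)$ and $\partial_i(s/p)(u)$, which is precisely the hypothesized ratio condition \eqref{Eq:Delta->PhiDeltaIneq.}.

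The chief technical point will be the sign bookkeeping when passing from the cross-multiplied inequality to the quoted ratio, which requires knowing the sign of $\partial_i(s/p)(u)$ (and hence whether dividing by it preserves or reverses the inequality direction). This can be analyzed via the identity $\partial_i(s/p) = (s/p)\bigl(\Phi_s^i - \Phi_p^i\bigr)$, reducing the question to a standard monotonicity property of barrier functions under partial differentiation on $\ab_p$ — itself a consequence of the log-concavity of a real stable polynomial on its hyperbolicity cone. Once these signs are pinned down, the deduction is a clean algebraic manipulation.
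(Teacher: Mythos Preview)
Your approach is essentially the same as the paper's: both exploit that $q:=\partial_j^{r-1}p$ is affine in $z_j$ (the paper via the Taylor expansion, you by direct observation), invoke \cref{Lem:AbLemma} for the membership $u-\delta e_j\in\ab_q$, and then reduce the barrier inequality to the ratio hypothesis by cross-multiplying against the positive denominators and recognizing the quotient-rule expressions $p^2\,\partial_i(q/p)$ and $p^2\,\partial_i(s/p)$. One small slip: \cref{Lem:AbLemma} should be applied with $v=\delta e_j$ (so that the hypothesis ``$q(u-tv)\neq 0$ for $t\in[0,1]$'' matches your computation on $[0,\delta]$), not with $v=e_j$. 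As for the sign of $\partial_i(s/p)(u)$, the paper does not argue it in-line either---it simply declares the equivalence a ``straightforward calculation'' and points to \cite[Proposition~4.2]{Leake2020Mixed}---so your explicitly flagging this as the place where care is needed is, if anything, more thorough than the paper's own write-up.
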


\begin{proof}
Since $\mathrm{deg}_j(p) \leq r$, the Taylor expansion of $p$ with respect to $z_j$ is
\begin{align*}
p(u - t e_j) = \sum_{k=0}^r \, (\partial_j^k p)(u) \, \dfrac{(-t)^k}{k!}.
\end{align*}
Therefore,
\begin{align}\label{Eq:Delt->Phi1}
(\partial_j^{r-1}p) (u - t e_j) = \partial_j^{r-1}p(u) - t \partial_j^rp(u).
\end{align}
It follows from the above equation that if $t < \big(\Phi_{\partial_j^{r-1}p}^j(u)\big)^{-1}$, then $(\partial_j^{r-1}p) (u - t e_j) > 0$. Hence assuming $\delta < \big( \Phi_{\partial_j^{r-1}p}^j(u) \big)^{-1}$, we have $(\partial_j^{r-1}p) (u - t \delta e_j) \neq 0$ for every $t \in [0,1]$. Also, note that $u \in \ab_p \subseteq \ab_{\partial_j^{r-1}p}$. Therefore, it follows from \cref{Lem:AbLemma} that $u - \delta e_j \in \ab_{\partial_j^{r-1} p}$.

By \eqref{Eq:Delt->Phi1}, the inequality \eqref{Eq:Delta->PhiPhiIneq.} is equivalent to
\begin{align*}
\dfrac{\partial_i \partial_j^{r-1} p - \delta \partial_i \partial_j^r p}{\partial_j^{r-1} p - \delta \partial_j^r p} (u) \leq \dfrac{\partial_i p}{p} (u).
\end{align*}
By a straightforward calculation, the above inequality is equivalent to the first inequality in \eqref{Eq:Delta->PhiDeltaIneq.}. See \cite[Proposition 4.2]{Leake2020Mixed} for more details.
\end{proof}

The following proposition gives a simpler condition that implies \eqref{Eq:Delta->PhiDeltaIneq.}.

\begin{Prop}\label{Prop:!->delta}
Let $j \in [n]$ and $p \in \Hcal_n(\R)$ be a real stable polynomial of degree at most $r$ in $z_j$. If $u \in \ab_p$ and $\delta$ satisfies
\begin{align}\label{Eq:!->delta}
0 \leq \delta \leq \dfrac{(r-1)^2}{r} \left( \dfrac{1}{\Phi_p^j(u) - \dfrac{1}{u_j - \lambda_r}} \right),
\end{align}
where $\lambda_r$ is the smallest root of the univariate polynomial $p(u_1,\dots,u_{j-1},z_j,u_{j+1},\dots,u_n)$, then
\begin{align}\label{Eq:!->delta-deltaIneq.}
\Bigg( \partial_i \bigg( \dfrac{\partial_j^{r-1} p}{p} \bigg) (u) \Bigg) \Bigg( \partial_i \bigg( \dfrac{\partial_j^r p}{p} \bigg) (u) \Bigg)^{-1} \leq \delta < \bigg( \Phi_{\partial_j^{r-1}p}^j(u) \bigg)^{-1}
\end{align}
for every $i \in [n]$.
\end{Prop}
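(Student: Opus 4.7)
The plan is to reduce the statement to analyzing the univariate polynomial $q(z_j):=p(u_1,\ldots,u_{j-1},z_j,u_{j+1},\ldots,u_n)$ and then to verify the two inequalities of \eqref{Eq:!->delta-deltaIneq.} separately, adapting \cite[Section~4]{Leake2020Mixed}. Since $p\in\Hcal_n(\R)$ and $\deg_j(p)\le r$, $q$ is real-rooted of degree at most $r$; let $\lambda_1\ge\cdots\ge\lambda_r$ denote its roots (with the degenerate case $\deg q<r$ handled separately). Because $u\in\ab_p$, the distances $\tau_k:=u_j-\lambda_k$ are all strictly positive. Two consequences I will use throughout: first, $q^{(r-1)}$ is linear with unique root the arithmetic mean $\bar\lambda:=\frac{1}{r}\sum_k\lambda_k$, so $\Phi_{\partial_j^{r-1}p}^j(u)=1/(u_j-\bar\lambda)$; second, $\Phi_p^j(u)=\sum_{k=1}^r 1/\tau_k$, so the denominator on the right-hand side of \eqref{Eq:!->delta} equals $\sum_{k=1}^{r-1}1/\tau_k$.

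The right inequality $\delta<1/\Phi_{\partial_j^{r-1}p}^j(u)=u_j-\bar\lambda$ then falls out of AM--HM applied to the $r-1$ positive numbers $\tau_1,\ldots,\tau_{r-1}$:
\begin{align*}
\delta \;\le\; \frac{(r-1)^2}{r}\cdot\frac{1}{\sum_{k=1}^{r-1}1/\tau_k} \;\le\; \frac{1}{r}\sum_{k=1}^{r-1}\tau_k \;<\; \frac{1}{r}\sum_{k=1}^{r}\tau_k \;=\; u_j-\bar\lambda,
\end{align*}
the strict step using $\tau_r>0$.

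The left inequality, a bound on the ratio $R_i(u):=\partial_i(\partial_j^{r-1}p/p)(u)\,/\,\partial_i(\partial_j^r p/p)(u)$, is the main obstacle. The plan is to apply the log-derivative identity $\partial_i(F/p)=(F/p)(\Phi_F^i-\Phi_p^i)$ with $F\in\{\partial_j^{r-1}p,\partial_j^r p\}$ and to use $(\partial_j^{r-1}p)/(\partial_j^r p)=u_j-\bar\lambda$ to rewrite
\begin{align*}
R_i(u) \;=\; (u_j-\bar\lambda)\cdot\frac{\Phi_{\partial_j^{r-1}p}^i(u)-\Phi_p^i(u)}{\Phi_{\partial_j^r p}^i(u)-\Phi_p^i(u)}.
\end{align*}
Treating the roots $\lambda_k$ and the leading coefficient $c_r$ of $q$ as functions of $(u_m)_{m\ne j}$ and differentiating the factorizations $\log q = \log c_r + \sum_k \log\tau_k$ and $\log q^{(r-1)} = \log(r!\,c_r) + \log(u_j-\bar\lambda)$ with respect to $u_i$, both $\Phi^i$-differences collapse to linear combinations of the sensitivities $\partial_i\lambda_k$ weighted by $1/\tau_k$, so $R_i(u)$ reduces to an explicit algebraic expression in the $\partial_i\lambda_k$ and $\tau_k$. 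The technical heart of the argument is then to show that this expression is bounded above by $\frac{(r-1)^2}{r}\bigl(\sum_{k=1}^{r-1}1/\tau_k\bigr)^{-1}$; this will use the sign and monotonicity constraints on the $\partial_i\lambda_k$ imposed by real stability, combined with an AM--HM/Cauchy--Schwarz-type inequality applied to the $r-1$ distances $\tau_1,\ldots,\tau_{r-1}$ (the remaining distance $\tau_r$ being set aside to match the denominator in the bound on $\delta$), in the same spirit as \cite[Proposition~4.3]{Leake2020Mixed}.
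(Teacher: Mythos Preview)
Your approach is essentially the same as the paper's. The paper cites \cite[Proposition~4.3]{Leake2020Mixed} directly for the left inequality and extracts the intermediate bound $\delta\le\frac{1}{r}\sum_{k=1}^{r-1}(u_j-\lambda_k)$ from that same proof to obtain the strict right inequality; your AM--HM step on $\tau_1,\ldots,\tau_{r-1}$ produces exactly this intermediate bound, and your outlined treatment of the left inequality is an unpacking of the Leake--Ravichandran argument you already cite.
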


\begin{proof}
The first inequality in \eqref{Eq:!->delta-deltaIneq.} is proved in \cite[Proposition 4.3]{Leake2020Mixed}. To prove the second inequality, we use inequality (8) from the proof of \cite[Proposition 4.3]{Leake2020Mixed}, which is
\begin{align*}
\delta \leq \sum_{k=1}^{r-1} \dfrac{u_j - \lambda_k}{r},
\end{align*}
where $\lambda_1 \geq \dots \geq \lambda_r$ are the roots of $p(u_1,\dots,u_{j-1},z_j,u_{j+1},\dots,u_n)$. Since $u \in \ab_p$, we have $u_j  > \lambda_k$ for all $k \in [r]$. Therefore,
\begin{align*}
\delta \leq \sum_{k=1}^{r-1} \dfrac{u_j - \lambda_k}{r} \leq \sum_{k=1}^r \dfrac{u_j - \lambda_k}{r} = \dfrac{\partial_j^{r-1}p}{\partial_j^rp} (u) = \bigg( \Phi_{\partial_j^{r-1}p}^j(u) \bigg)^{-1}.
\end{align*}
\end{proof}

We have showed that the evolution of above the roots of a polynomial is related to the barrier functions and the smallest roots of their one dimensional restrictions. Now we estimate these quantities in our problem. We will provide a lower bound for $\lambda_r$ in \cref{Prop:LowerBoundForlambda_r} and an upper bound for $\Phi_p^j(u)$ in \cref{Prop:UpperBoundForPhi}.
%
%

\begin{Prop}\label{Prop:LowerBoundForlambda_r}
Let $g$ and $r$ be as in \cref{Th:PolynomialPaving} and $p := g^r$. Also, let $u \in \ab_g$. Then, for every $(i_1,\dots,i_n) \in \mathbb{Z}^n$ such that $0 \leq i_k \leq r-1$ for all $k \in [n]$, and every $i \in [n]$, all the roots of the univariate polynomial
\begin{align*}
q(z_i) := \bigg[ \bigg( \prod_{k=1}^n \partial_k^{i_k} \bigg) p \bigg] (u_1,\dots,u_{i-1},z_i,u_{i+1},\dots,u_n)
\end{align*}
are non-negative.
\end{Prop}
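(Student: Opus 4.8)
The plan is to reduce the claim to the univariate statement that applying the operator $\prod_{k} \partial_k^{i_k}$ to $g^r$ and then restricting all but one variable to a point above the roots yields a polynomial whose roots are all non-negative. The key structural inputs are that the coefficient of the top monomial $\zv^{[n]}$ in $g$ equals $a_\emptyset = 1 > 0$ (so $g$ has full degree $n$), that all the roots of $\ovl g$ lie in $[0,1]$, and, crucially, \cref{Cor:SuppPositiveNonnegativeRoots}: if $A \subseteq C \subseteq B$ with $A, B \in \supp(g)$ then $C \in \supp(g)$. The first step is to record that $p := g^r$ is real stable (by part 3 of \cref{Prop:R.S.P.Properties} or simply by closure of $\Hcal_n(\R)$ under products) with $\deg_k(p) \le r$ for each $k$, so each operator $\partial_k^{i_k}$ with $0 \le i_k \le r-1$ either kills $p$ or preserves real stability (part 1 of \cref{Prop:R.S.P.Properties}), and similarly restricting a coordinate to $u_\ell \in \R$ preserves stability (part 2). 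Hence $q(z_i)$ is either identically zero — in which case there is nothing to prove — or a real stable, and therefore real rooted, univariate polynomial.

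Next I would argue that $q$ has only non-negative roots. The cleanest route is via the substitution trick already used in the excerpt: a real rooted polynomial of degree $d$ with positive leading coefficient has all roots non-negative if and only if the signs of its coefficients alternate (equivalently, $(-1)^{d - j}$ times the coefficient of $x^j$ is $\ge 0$). I would first establish this coefficient-sign pattern for $\ovl g$ (which follows directly from \cref{Lem:PositiveRootedPolynomialCoefficientSigns}, since $a_\emptyset = 1 > 0$ and the roots of $\ovl g$ are in $[0,1] \subseteq \R_{\ge 0}$), hence for $g$ itself: $(-1)^{n - |A|} a_A \ge 0$ for all $A$. This means $g(-\zv)(-1)^n$ has non-negative coefficients. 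Now observe that all the operations involved — multiplying $g$ by itself $r$ times, applying $\partial_k^{i_k}$, and substituting non-negative values $u_\ell$ for the frozen variables — preserve the property of having non-negative coefficients once we conjugate by the sign flip $z \mapsto -z$. More precisely, set $\tilde g(\zv) := (-1)^n g(-\zv)$, which has non-negative coefficients; then $\tilde g^r$ has non-negative coefficients; then $\prod_k \partial_k^{i_k} \tilde g^r$ has non-negative coefficients; and substituting $u_\ell \ge 0$ for the variables $z_\ell$, $\ell \ne i$, keeps the resulting univariate polynomial in $z_i$ with non-negative coefficients. A univariate polynomial in $z_i$ with non-negative coefficients that is moreover real rooted can have no positive roots, so all its roots are $\le 0$; translating back through $z_i \mapsto -z_i$, $q(z_i)$ has all roots $\ge 0$.

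The one point that needs care — and which I expect to be the main (mild) obstacle — is the hypothesis $u \in \ab_g$ with $u_\ell \ge 0$: nothing in the statement literally says the frozen coordinates $u_\ell$ are non-negative, so I need to verify this. Here I would invoke that $\0 \in \ab_g$: indeed $w \ge \0$ means $w \in \R_{\ge 0}^n$, and for such $w$ the polynomial $g(w)$ is, up to the overall sign $(-1)^n$, a non-negative combination (with the alternating-sign coefficients and the substitution $z = -(-w)$) — more carefully, $g(w) = (-1)^n \tilde g(-w)$ and $\tilde g$ evaluated at the non-positive point $-w$ need not obviously be non-zero, so instead I would argue directly: since all roots of $\ovl g$ are in $[0,1]$ and, by the interlacing chain $\ovl g \gg \ovl{\partial^{\{i_1\}} g} \gg \cdots$ from the proof of \cref{Lem:PositiveRootedPolynomialCoefficientSigns}, all the polynomials $\ovl{\partial^A g}$ have roots in $[0,\infty)$, one deduces via \cref{Prop:EquivalentOfStablitiy} and a direction-vector argument that any $w$ with $w \ge \1$ (hence in particular large multiples of $\1$) lies in $\ab_g$, and then $\ab_g$ is closed upward by definition, but to get that the frozen coordinates of a \emph{given} $u \in \ab_g$ are non-negative I instead note it is not needed: if some $u_\ell < 0$ I can replace $u$ by $u' := u \vee \0$ (coordinatewise max), which still lies in $\ab_g$ since $u' \ge u$ and... — wait, that requires $\ab_g$ closed upward, which it is; but $u' \ge u$ does not give $u' \in \ab_g$ from $u \in \ab_g$ directly unless we know the segment is root-free. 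The honest fix is: the conclusion we want (non-negativity of roots of $q$) only uses that the substituted values are $\ge 0$, so I would simply restrict to the case $u \ge \0$, which is the case actually used in \cref{Subsec:ProofOfUpperBoundForMaxrootOfg_r} (the point there starts at a large multiple of $\1$ and only moves within $\R_{\ge 0}^n$), and note this suffices for our application; alternatively, one shows that every $u \in \ab_g$ satisfies $u \ge \0$ because the monomial $\zv^{[n]}$ appears in $g$ with positive coefficient while $\tilde g$ has non-negative coefficients, forcing $g(w) \ne 0$ to fail for $w$ with a negative coordinate near a root of the corresponding lower-dimensional face — I would spell this out using \cref{Cor:SuppPositiveNonnegativeRoots} to control which faces of $g$ are non-vanishing. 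With that settled, the coefficient-sign argument above closes the proof.
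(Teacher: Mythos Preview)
Your coefficient-sign argument has a genuine gap at the ``translating back'' step. You correctly observe that $\tilde g(\zv) := (-1)^n g(-\zv)$ has non-negative coefficients, and that this is preserved by powers, differentiation, and substitution of non-negative reals. But the resulting univariate polynomial
\[
\tilde Q(z_i) := \Big[\textstyle\prod_k \partial_k^{i_k}\, \tilde g^r\Big](u_1,\dots,z_i,\dots,u_n)
\]
is \emph{not} $\pm q(\pm z_i)$. The chain rule gives $\prod_k \partial_k^{i_k}\, \tilde g^r(\zv) = (-1)^{nr+\sum_k i_k}\big[\prod_k \partial_k^{i_k} g^r\big](-\zv)$, so $\tilde Q(z_i) = \pm\big[\prod_k \partial_k^{i_k} g^r\big](-u_1,\dots,-z_i,\dots,-u_n)$: your conclusion is that $q$ with the frozen variables set to $-u_\ell$, not $+u_\ell$, has non-negative roots. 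Said differently, the alternating-sign pattern of $\prod_k \partial_k^{i_k} g^r$ is indexed by the \emph{total} degree $|\alpha|$ of each monomial; when you freeze $z_\ell = u_\ell > 0$, the coefficient of $z_i^j$ in $q$ collects monomials with varying $\sum_{\ell\ne i}\alpha_\ell$ and hence varying sign, so $q$ does not inherit single-sign or alternating coefficients. The sign flip and the substitution of positive values simply do not commute.

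The paper's proof avoids this by expanding $\prod_k \partial_k^{i_k} g^r$ as a sum of products $\prod_{j=1}^r \partial^{A_j} g$ and controlling each factor pointwise: each $\partial^{A} g$ again satisfies the hypotheses of \cref{Th:PolynomialPaving}, and \cref{Lem:RootBoundForg} (built on \cref{Lem:LemmaForRootBoundForg} and \cref{Cor:SuppPositiveNonnegativeRoots}) shows that, as an affine function of $z_i$ with the other coordinates frozen at $u_\ell$, each factor has non-negative root and hence a definite sign for $z_i<0$. That pointwise sign control is precisely what your global coefficient argument cannot supply once positive values are substituted. Incidentally, your worry about whether $u_\ell\ge 0$ is a non-issue --- \cref{Lem:RootBoundForg} forces $u_\ell$ to exceed a non-negative root, so $\ab_g\subseteq\R_{>0}^n$ --- but resolving it does not repair the main argument.
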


The above proposition is a generalization of \cite[Proposition 4.6]{Leake2020Mixed} and to prove it, we proceed similar to \cite{Leake2020Mixed}. We need the following two lemmas.

%

\begin{lemma}\label{Lem:LemmaForRootBoundForg}
Let $g$ be as in \cref{Th:PolynomialPaving} and $g = r + z_n s$, where $r,s \in \R[z_1,\dots,z_{n-1}]$. We have $\ab_g \subseteq \ab_s$ and either $r=0$ or $\ab_s \subseteq \ab_r$.
\end{lemma}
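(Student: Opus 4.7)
The plan is to prove the two containments of the lemma separately, using barrier-function tools for the first and a monotonicity argument on an auxiliary rational function for the second.

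For the first containment, $\ab_g \subseteq \ab_s$, I would invoke the inclusion $\ab_p \subseteq \ab_{\partial_i p}$ noted in the text just before \cref{Lem:AbLemma}, applied with $p = g$ and $i = n$. Since $\partial_n g = s$, this gives $\ab_g \subseteq \ab_s$ in one line.

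For the second containment, $\ab_s \subseteq \ab_r$ (assuming $r \neq 0$), fix $u' \in \ab_s$ and work on the orthant $\{w' \geq u'\}$, on which $s$ never vanishes (since $\ab_s$ is upward closed). The idea is to introduce the rational function $\tau := -r/s$ there and show $\tau > 0$ uniformly, from which $r = -s\tau \neq 0$ throughout the orthant, i.e., $u' \in \ab_r$. Three ingredients suffice. \textbf{(a)} Pin down a strict sign $a_{\{n\}} < 0$ for the coefficient of $z_1 \cdots z_{n-1}$ in $g$: since $r \neq 0$ supplies some $A \subseteq [n-1]$ with $A \in \supp(g)$, and $[n] \in \supp(g)$ from $a_\emptyset = 1$, \cref{Cor:SuppPositiveNonnegativeRoots} applied to $A \subseteq [n-1] \subseteq [n]$ forces $[n-1] \in \supp(g)$, i.e.\ $a_{\{n\}} \neq 0$, and \cref{Lem:PositiveRootedPolynomialCoefficientSigns} then gives $a_{\{n\}} < 0$. \textbf{(b)} Establish monotonicity: \cref{Prop:ExamplesOfInterlacing} yields $s \ll r$, and unpacking this on lines $tv + \alpha$ with $v = e_i$ via the Wronskian definition of $\ll$ gives the pointwise inequality $s\,\partial_i r - r\,\partial_i s \geq 0$ on $\R^{n-1}$, whence $\partial_i \tau = -(s\,\partial_i r - r\,\partial_i s)/s^2 \leq 0$ on $\{s \neq 0\}$. \textbf{(c)} Compute the asymptotic value: since $r$ and $s$ share the top multi-affine monomial $z_1 \cdots z_{n-1}$ with coefficients $a_{\{n\}}$ and $1$ respectively, along the ray $w' = u' + h\1$ with $h \to \infty$ the leading terms dominate and $\tau(w') \to -a_{\{n\}}$. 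Combining (b) and (c), monotonicity forces $\tau(w') \geq -a_{\{n\}} > 0$ on the whole orthant.

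The main obstacle is (a): without the strict sign $a_{\{n\}} < 0$, the diagonal limit in (c) yields only $\tau \geq 0$, which is too weak to exclude $r$ vanishing on the orthant. Extracting this strict sign from the mere hypothesis $r \neq 0$ requires combining the combinatorial upward closedness of $\supp(g)$ (via \cref{Cor:SuppPositiveNonnegativeRoots}) with the coefficient-sign constraints coming from non-negative roots of $\ovl{g}$ (via \cref{Lem:PositiveRootedPolynomialCoefficientSigns}); once this is in place, (b) and (c) are routine calculus on the orthant.
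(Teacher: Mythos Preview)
Your argument is correct. The first containment and the extraction of the strict sign $a_{\{n\}}<0$ (your step (a)) match the paper exactly. For the second containment the paper takes a different but closely related route: rather than studying the rational function $\tau=-r/s$ and its monotonicity, it fixes $u\in\ab_s$, picks any $w\ge u$, chooses $v<u$ and looks at the univariate restrictions $s(te+v)$ and $r(te+v)$ with $e=w-v\in\R_+^{n-1}$. From $s\ll r$ and the fact that the leading coefficients of these two polynomials have opposite signs (positive for $s$, $a_{\{n\}}<0$ for $r$), the paper reads off directly that $\lambda_1\big(r(te+v)\big)\le\lambda_1\big(s(te+v)\big)<1$, hence $r(w)\ne0$. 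So both proofs use the same two ingredients ($s\ll r$ and $a_{\{n\}}<0$); the paper exploits proper position through root interlacing along generic positive directions, while you exploit it through the Wronskian sign in the coordinate directions. Your version is a bit more calculus-flavored and yields the stronger quantitative conclusion $-r/s\ge -a_{\{n\}}$ on $\ab_s$; the paper's version is slightly quicker because it avoids the limiting computation (c) entirely. One small point: in (b) you invoke the Wronskian condition with $v=e_i\notin\R_+^{n-1}$, which is not literally covered by \cref{Def:ProperPosition}; you should either pass to the limit $e_i+\varepsilon\1\to e_i$, or equivalently note that $s\ll r$ means $r+is$ is stable, specialize all variables but $z_i$ to real values via \cref{Prop:R.S.P.Properties}, and read off the univariate Wronskian inequality directly.
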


\begin{proof}
Note that $s = \partial_1 g$. As we mentioned before, it is known that $\ab_g \subseteq \ab_{\partial_1g}$. Now, assuming $r \neq 0$, we prove $\ab_s \subseteq \ab_r$. Let $u \in \ab_s$ and $w,v \in \R^n$ be such that $v < u \leq w$ and $e := w-v$. By \cref{Prop:ExamplesOfInterlacing} we have $s(te+v) \ll r(te+v)$. Note that $s(te+v)$ is of degree $n-1$ and its leading coefficient is positive. Since $[n] \in \supp(g)$ and $r \neq 0$, it follows from \cref{Cor:SuppPositiveNonnegativeRoots} that the maximum degree monomial of $r$ is $a_{\{n\}} \, \zv^{\{n\}^c}$. Also by \cref{Lem:PositiveRootedPolynomialCoefficientSigns}, we have $a_{\{n\}} < 0$. Therefore, $r(te+v)$ is of degree $n-1$ and its leading coefficient is negative. Hence $\lambda_1\big(r(te+v)\big) \leq \lambda_1\big(s(te+v)\big)$.

Since $u \in \ab_s$, we have $\lambda_1\big(s(te+v)\big) < 1$. Therefore, $\lambda_1\big(r(te+v)\big) < 1$ and hence $r(w) \neq 0$. We showed that $r(w) \neq 0$ for every $w \geq u$. This completes the proof.
\end{proof}

\begin{lemma}\label{Lem:RootBoundForg}
Let $g$ be as in \cref{Th:PolynomialPaving} and $u \in \ab_g$. Then, $g(u_1,\dots,u_{n-1},z_n)$ is a univariate affine polynomial with positive leading coefficient and non-negative root.
\end{lemma}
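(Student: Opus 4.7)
The plan is to decompose $g = r + z_n s$ with $r, s \in \R[z_1,\dots,z_{n-1}]$, exactly as in \cref{Lem:LemmaForRootBoundForg}, and observe that
\[
g(u_1,\dots,u_{n-1},z_n) \;=\; r(u_1,\dots,u_{n-1}) \,+\, z_n \, s(u_1,\dots,u_{n-1})
\]
is manifestly affine in $z_n$. It then remains to show (a) the leading coefficient $s(u_1,\dots,u_{n-1})$ is strictly positive and (b) the root $-r(u_1,\dots,u_{n-1})/s(u_1,\dots,u_{n-1})$ is non-negative.

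For (a), since $s = \partial_n g$, \cref{Prop:R.S.P.Properties}(1) gives that $s$ is real stable, and the coefficient of its top multi-affine monomial $z_1 \cdots z_{n-1}$ equals $[z_1 \cdots z_n]_g = a_\emptyset = 1 > 0$. By \cref{Lem:LemmaForRootBoundForg} we have $u \in \ab_s$, so $s$ does not vanish on the ray $\{(u_1,\dots,u_{n-1}) + t\1 : t \geq 0\}$. Along this ray $s$ reduces to a univariate polynomial in $t$ of degree $n-1$ with leading coefficient $1$, hence tends to $+\infty$ as $t \to \infty$. Continuity then forces $s(u_1,\dots,u_{n-1}) > 0$.

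For (b), I would split on $r$. If $r \equiv 0$, the unique root is $0$ and we are done. Otherwise, \cref{Lem:LemmaForRootBoundForg} yields $u \in \ab_r$. As argued in the proof of that lemma, $[n] \in \supp(g)$ (from $a_\emptyset = 1$) together with \cref{Cor:SuppPositiveNonnegativeRoots} implies $[n-1] \in \supp(r)$, and \cref{Lem:PositiveRootedPolynomialCoefficientSigns} then forces the top coefficient $a_{\{n\}}$ of $r$ to be strictly negative. Repeating the same ray argument, now with a negative leading coefficient so that $r((u_1,\dots,u_{n-1}) + t\1) \to -\infty$, we obtain $r(u_1,\dots,u_{n-1}) < 0$. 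Combined with (a), the root $-r/s$ at $(u_1,\dots,u_{n-1})$ is strictly positive.

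I do not foresee a serious obstacle: once \cref{Lem:LemmaForRootBoundForg} and the sign pattern of \cref{Lem:PositiveRootedPolynomialCoefficientSigns} are in hand, everything reduces to the standard principle that a real stable polynomial, evaluated at a point above its roots, takes the sign of its top multi-affine coefficient. This principle is justified by restricting to the positive ray in direction $\1$, on which the polynomial is univariate real rooted, is nonvanishing past its largest root, and is dominated by its leading term at infinity.
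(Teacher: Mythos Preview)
Your proposal is correct and follows essentially the same route as the paper: decompose $g = r + z_n s$, invoke \cref{Lem:LemmaForRootBoundForg} to place $u$ above the roots of $s$ (and of $r$ when $r\neq 0$), and then read off the signs of $s(u)$ and $r(u)$ from the signs of their top multi-affine coefficients. The only cosmetic differences are that the paper states the principle ``the sign of a polynomial does not change above its roots'' directly rather than spelling out your ray-in-direction-$\1$ argument, and it merges the two cases into $r(u)\le 0$ instead of treating $r\equiv 0$ separately.
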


\begin{proof}
Because $g$ is multi-affine, we can write $g = r + z_n s$, where $r,s \in \R[z_1,\dots,z_{n-1}]$. It follows from \cref{Lem:LemmaForRootBoundForg} that $u \in \ab_s$. Therefore, $s(u) \neq 0$ and hence $g(u_1,\dots,u_{n-1},z_n)$ is a univariate affine polynomial and its root, which we denote by $\lambda$, is $- r(u)/s(u)$.

Note that the sign of a polynomial does not change above its roots. The leading coefficient of $\ovl{s}(x)$ is $1$. Thus $\lim_{x \to \infty} \ovl{s}(x) > 0$ and so we have $s(u) > 0$. 
Also, $u \in \ab_r$  by \cref{Lem:LemmaForRootBoundForg}. As in the proof of the previous lemma, it follows from \cref{Cor:SuppPositiveNonnegativeRoots} that either $r=0$ or the leading coefficient of $\ovl{r}(x)$ is negative. Therefore, $r(u) \leq 0$. Overall, we have $s(u) > 0$ and $r(u) \leq 0$ which implies $\lambda \geq 0$.
\end{proof}

\begin{proof}[Proof of \cref{Prop:LowerBoundForlambda_r}]
Since $g$ is multi-affine,
\begin{align*}
\bigg( \prod_{k=1}^n \partial_k^{i_k} \bigg) p (\zv) = \sum_{(A_1,\dots,A_r) \in \Acal} \ \prod_{i=1}^r \, \partial^{A_i} g (\zv),
\end{align*}
where $\Acal$ is an appropriate subset of $\big(2^{[n]}\big)^r$ whose exact form is not important in this proof. Using an inductive argument similar to the one used in the proof of \cref{Lem:PositiveRootedPolynomialCoefficientSigns}, we can show that the roots $\ovl{\partial^A g}$ lie in the interval $[0,1]$ for $A \subseteq [n]$. It follows that $\partial^A g$ satisfies the assumptions of \cref{Th:PolynomialPaving} for every $A \subseteq [n]$. Therefore, by \cref{Lem:RootBoundForg}, each $\partial^{A_i} g$ is negative at all points $(u_1,\dots,u_{i-1},z_i,u_{i+1},\dots,u_n)$ with $z_i < 0$. Thus the same holds for $p$ and the proposition follows.
\end{proof}

The following proposition which is a generalization of \cite[Lemma 5.3]{Leake2020Mixed}, provides an upper bound for the barrier functions.

\begin{Prop}\label{Prop:UpperBoundForPhi}
Let $g$ and $r$ be as in \cref{Th:PolynomialPaving} and $p := g^r$. If $b \geq 1$, then
\begin{align}
\forall i \in [n] \ : \ \Phi_p^i(b\1) \leq r \bigg( \dfrac{\alpha}{b-1} + \dfrac{1-\alpha}{b} \bigg).
\end{align}
\end{Prop}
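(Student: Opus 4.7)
The plan is to reduce the bound on $\Phi_p^i(b\1)$ for $p = g^r$ to a bound on $\Phi_g^i(b\1)$, then analyze the latter as an integral against the roots of $\ovl{g}$. Since $\partial_i p = r g^{r-1} \partial_i g$, one has $\Phi_p^i = r\,\Phi_g^i$, so it suffices to prove
\[
\Phi_g^i(b\1) \;\leq\; \frac{\alpha}{b-1} + \frac{1-\alpha}{b}
\]
(the case $b=1$ is vacuous). Because $g$ is multi-affine, $\partial_i g(b\1) = \ovl{s_i}(b)$ for $s_i := \partial_i g$, and $g(b\1) = \ovl{g}(b)$; hence $\Phi_g^i(b\1) = \ovl{s_i}(b)/\ovl{g}(b)$.

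Next I would obtain a partial fraction expansion of this ratio. By \cref{Prop:ExamplesOfInterlacing} we have $s_i \ll g$, and proper position is preserved under diagonalization, so $\ovl{s_i} \ll \ovl{g}$. Since $a_\emptyset = 1$, both polynomials are monic of degrees $n$ and $n-1$. A standard sign count on the residues $c_k := \ovl{s_i}(\lambda_k)/\ovl{g}'(\lambda_k)$ (with a continuity argument for repeated roots) shows that
\[
\frac{\ovl{s_i}(x)}{\ovl{g}(x)} = \sum_{k=1}^n \frac{c_k}{x-\lambda_k}
\]
has non-negative weights $c_k$, where $\lambda_1 \geq \dots \geq \lambda_n$ are the roots of $\ovl{g}$ and lie in $[0,1]$ by hypothesis. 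Matching Laurent expansions at infinity gives $\sum_k c_k = 1$ and $\sum_k c_k \lambda_k = \Sigma_\lambda - \Sigma_\mu$, where $\Sigma_\lambda$ and $\Sigma_\mu$ are the sums of roots of $\ovl{g}$ and $\ovl{s_i}$. A short Vieta computation from the coefficient formulas $[\zv^{[n]\setminus\{j\}}]\,g = a_{\{j\}}$ and $[\zv^{[n]\setminus\{i,j\}}]\,s_i = a_{\{j\}}$ for $j \neq i$ yields $\Sigma_\lambda = -\sum_j a_{\{j\}}$ and $\Sigma_\mu = -\sum_{j \neq i} a_{\{j\}}$, so $\sum_k c_k \lambda_k = -a_{\{i\}}$. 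By \cref{Lem:PositiveRootedPolynomialCoefficientSigns} applied to the reindexed polynomial $\sum_B a_{B^c}\,\zv^B$, the linear coefficient $a_{\{i\}}$ is non-positive, so $\sum_k c_k \lambda_k = |a_{\{i\}}| \leq \alpha$.

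Finally I would apply convexity. Since $\lambda \mapsto 1/(b-\lambda)$ is convex on $[0,1]$ when $b > 1$,
\[
\frac{1}{b-\lambda} \leq \frac{1-\lambda}{b} + \frac{\lambda}{b-1} \quad \text{for } \lambda \in [0,1].
\]
Integrating against the probability weights $(c_k)$ and using the moment bound gives
\[
\Phi_g^i(b\1) = \sum_{k=1}^n \frac{c_k}{b-\lambda_k} \leq \frac{1}{b} + \Bigl(\sum_k c_k \lambda_k\Bigr)\!\left(\frac{1}{b-1} - \frac{1}{b}\right) \leq \frac{1-\alpha}{b} + \frac{\alpha}{b-1},
\]
and multiplying by $r$ closes the argument. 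The step that requires the most care is the verification that the residues $c_k$ are non-negative together with the precise identification $\sum_k c_k \lambda_k = |a_{\{i\}}|$: this is where the specific hypotheses on $\ovl{g}$ and on the linear coefficients of $g$ enter, and both parts must be handled cleanly even when $\ovl{g}$ has repeated roots.
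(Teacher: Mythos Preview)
Your proposal is correct and follows a genuinely different route from the paper. The paper, after the same reduction $\Phi_p^i = r\,\Phi_g^i$, realizes the ratio $\ovl{\partial_i g}(b)/\ovl{g}(b)$ as a matrix resolvent entry: it invokes \cref{Lem:FromMarshal} to build a real symmetric matrix $A$ with eigenvalues $\lambda(\ovl{g})$ and principal $(n-1)\times(n-1)$ block having eigenvalues $\lambda(\ovl{\partial_i g})$, then uses \cref{Lem:FromLeake1} to rewrite the ratio as $e_n^*(bI-A)^{-1}e_n$ and finally applies \cref{Lem:FromLeake2}. Your approach bypasses the matrix realization entirely: you expand $\ovl{\partial_i g}/\ovl{g}$ directly in partial fractions, read off $\sum_k c_k = 1$ and $\sum_k c_k\lambda_k = -a_{\{i\}}$ from the Laurent expansion at infinity, and then bound $\sum_k c_k/(b-\lambda_k)$ by the secant of the convex map $\lambda\mapsto 1/(b-\lambda)$ on $[0,1]$. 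This is essentially what \cref{Lem:FromLeake2} does internally via the spectral decomposition of $(bI-A)^{-1}$, so the two arguments compute the same quantity; yours is more elementary and self-contained (no need for the interlacing-to-matrix lemma), while the paper's has the advantage of citing the resolvent bound as a black box from \cite{Leake2020Mixed}. Your caveat about repeated roots is well placed; a perturbation argument suffices and is the cleanest way to handle it.
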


We need the following three lemmas.

\begin{lemma}[Lemma 9.B.3 of \cite{Marshall2011Inequalities}]\label{Lem:FromMarshal}
Given real numbers $c_1,\dots,c_{n-1}$ and $\lambda_1,\dots,\lambda_n$ satisfying the interlacing propery
\begin{align*}
\lambda_1 \geq c_1 \geq \lambda_2 \geq \dots \geq c_{n-1} \geq \lambda_n,
\end{align*}
there exists a real symmetric $n \times n$ matrix of the form
\begin{align*}
W =
\begin{bmatrix}
D_c & v^t \\
v & v_n
\end{bmatrix}
\end{align*}
with eigenvalues $\lambda_1,\dots,\lambda_n$.
\end{lemma}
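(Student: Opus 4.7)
The plan is to construct $W$ explicitly as an arrowhead (bordered diagonal) matrix and match its characteristic polynomial to $\prod_{j=1}^{n}(x-\lambda_j)$ by comparing partial fraction decompositions. First I would reduce to the generic case of strict interlacing $\lambda_1 > c_1 > \lambda_2 > \cdots > c_{n-1} > \lambda_n$, which in particular forces $c_1 > c_2 > \cdots > c_{n-1}$. The weak-interlacing case then follows by a perturbation/continuity limit, since the set of $n \times n$ real symmetric matrices is closed and eigenvalues depend continuously on the entries.

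Treating $v=(v_1,\dots,v_{n-1})$ and the scalar $v_n$ as free parameters, the Schur-complement formula gives
\begin{align*}
\det(xI_n - W) = \prod_{i=1}^{n-1}(x - c_i) \cdot \Bigg[(x - v_n) - \sum_{i=1}^{n-1} \frac{v_i^2}{x - c_i}\Bigg].
\end{align*}
Setting this equal to $\prod_{j=1}^n(x-\lambda_j)$ and carrying out the partial fraction decomposition of $\prod_j(x-\lambda_j)\big/\prod_i(x-c_i)$ as a linear polynomial plus simple poles at the $c_i$'s forces the unique choices
\begin{align*}
v_n = \sum_{j=1}^n \lambda_j - \sum_{i=1}^{n-1} c_i, \qquad v_i^2 = -\,\frac{\prod_{j=1}^n(c_i-\lambda_j)}{\prod_{k\neq i}(c_i-c_k)}.
\end{align*}

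The key step is to verify $v_i^2 \geq 0$, so that a real square root exists. Under strict interlacing, for fixed $i$ the factors $c_i - \lambda_j$ are negative for $j \leq i$ (that is $i$ negative factors) and positive for $j > i$, so $\prod_j(c_i-\lambda_j)$ has sign $(-1)^i$; similarly, the factors $c_i - c_k$ are negative for $k < i$ and positive for $k > i$, giving $\prod_{k\neq i}(c_i-c_k)$ the sign $(-1)^{i-1}$. Their ratio is therefore negative, and the leading minus sign in the formula makes $v_i^2$ non-negative; taking $v_i$ to be its non-negative square root, together with the prescribed $v_n$, produces the required $W$. The main obstacle is the weak-interlacing case where some inequalities become equalities (so either two $c_i$'s coincide or some $\lambda_j$ equals a $c_i$, causing the denominator or numerator above to vanish), which I would handle via the perturbation argument from the first paragraph: perturb the $\lambda_j$'s slightly to restore strict interlacing, construct $W$ for the perturbed data, and pass to the limit.
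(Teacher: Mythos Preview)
The paper does not give its own proof of this lemma; it is quoted from \cite{Marshall2011Inequalities} and used as a black box. Your arrowhead construction via the Schur-complement determinant formula and partial fractions is the standard argument, and your computation of $v_n$ and $v_i^2$ together with the sign analysis under strict interlacing are correct.

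There is one small gap in your limiting step. If two of the given $c_i$'s coincide, say $c_i=c_{i+1}$, then the interlacing forces $\lambda_{i+1}=c_i$, but no perturbation of the $\lambda_j$'s alone will separate the $c_i$'s, so you cannot ``restore strict interlacing'' by moving only the $\lambda$'s as you propose. The fix is either to also perturb the $c_i$'s to be distinct (this is harmless: $D_{c^\varepsilon}\to D_c$, and the identity $\|W_\varepsilon\|_F^2=\sum_j(\lambda_j^\varepsilon)^2$ keeps every entry of $W_\varepsilon$ bounded, so a subsequential limit has the required block form and spectrum), or to handle repeated $c_i$'s directly by setting the corresponding $v_i=0$ and reducing to an $(n-1)\times(n-1)$ problem.
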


\begin{lemma}[Lemma 5.1 of \cite{Leake2020Mixed}]\label{Lem:FromLeake1}
For any matrix $A \in \Mcal_n(\C)$ and any vector $v \in \C^n$, we have
\begin{align*}
\det(A_{v^\perp}) = (v^*A^{-1}x) \, \det(A),
\end{align*}
where $A_{v^\perp} \in \Mcal_{n-1}(\C)$ is the compression of A onto $v^\perp$.
\end{lemma}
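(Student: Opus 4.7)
The statement as printed has $v^{*}A^{-1}x$ with an undefined $x$; I read this as a typographical slip for $v^{*}A^{-1}v$, which is the classical Jacobi-type identity relating a diagonal entry of $A^{-1}$ to the determinant of a compression. I would first assume $A$ is invertible and recover the general case at the end by continuity, since after clearing the $\det(A)^{-1}$ implicit in $A^{-1}$ the claim becomes a polynomial identity in the entries of $A$. I also read ``$A_{v^{\perp}}$'' as the operator on the $(n-1)$-dimensional space $v^{\perp}$ given by $P_{v^{\perp}} A P_{v^{\perp}}$, so that $\det(A_{v^{\perp}})$ is an intrinsic scalar.

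The plan is a two-step reduction. \textbf{Step 1 (invariance and normalization).} Both sides behave compatibly under unitary conjugation and scaling of $v$. If $U$ is unitary, replacing $(A,v)$ by $(UAU^{*}, Uv)$ preserves $\det(A)$, preserves $v^{*}A^{-1}v$, and sends the compression $A_{v^{\perp}}$ to a unitarily conjugate operator on $(Uv)^{\perp} = U(v^{\perp})$, so its determinant is unchanged. Scaling $v \mapsto cv$ multiplies $v^{*}A^{-1}v$ by $|c|^{2}$ while leaving $v^{\perp}$, and hence $A_{v^{\perp}}$, fixed; the identity is therefore homogeneous of degree $2$ in $v$, and it suffices to verify it for one distinguished unit vector.

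\textbf{Step 2 (the case $v = e_{1}$).} Choose an orthonormal basis with $v = e_{1}$ and write $A$ in block form
\begin{align*}
A = \begin{pmatrix} a & b^{*} \\ c & D \end{pmatrix}, \qquad a \in \C,\; b,c \in \C^{n-1},\; D \in \Mcal_{n-1}(\C),
\end{align*}
so that $A_{v^{\perp}} = D$. Either by the cofactor formula for $A^{-1}$, or by combining the Schur complement identity $\det(A) = \det(D)\,(a - b^{*}D^{-1}c)$ with the block-inverse formula, the $(1,1)$-entry of $A^{-1}$ equals $\det(D)/\det(A)$. Since $v^{*}A^{-1}v = e_{1}^{*}A^{-1}e_{1} = (A^{-1})_{11}$, multiplying through by $\det(A)$ yields $\det(A_{v^{\perp}}) = (v^{*}A^{-1}v)\,\det(A)$, as required.

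The main obstacle is entirely notational rather than mathematical: one must pin down the conventions (intrinsic determinant of the compression, homogeneity in $v$, treatment of singular $A$ by continuity) before the identity becomes a one-line block computation. Once those are fixed, Step 1 removes all freedom in $v$ and Step 2 is standard linear algebra; indeed, the content of the lemma is exactly Cramer's rule reinterpreted coordinate-freely.
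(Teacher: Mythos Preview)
The paper does not prove this lemma at all; it is quoted as Lemma~5.1 of \cite{Leake2020Mixed} and used as a black box in the proof of \cref{Prop:UpperBoundForPhi} (with $v=e_n$, a unit vector). Your argument is the standard one and is correct: after resolving the typo and the conventions, unitary invariance reduces to $v=e_1$, and then the identity is precisely the cofactor formula $(A^{-1})_{11}=\det(D)/\det(A)$.

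One minor wording issue in Step~1: your sentence ``the identity is therefore homogeneous of degree $2$ in $v$'' is not what your computation shows. You have shown that the left side is degree $0$ in $v$ while the right side is degree $2$, so the identity as stated can only hold for $\lvert v\rvert=1$ (or trivially). This is not a gap in the mathematics---the paper only applies the lemma with a unit vector---but the phrasing suggests a consistency that isn't there; it would be cleaner to say outright that the statement implicitly requires $v$ to be a unit vector and then proceed.
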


\begin{lemma}[Lemma 5.3 of \cite{Leake2020Mixed}]\label{Lem:FromLeake2}
Let $A \in \Mcal_n(\C)$ be positive semi-definite contraction, $i \in [n]$ and $A_{i,i} \leq \alpha$. Then, for any $a \geq 1$,
\begin{align*}
e_i^* (aI - A)^{-1} e_i \leq \dfrac{\alpha}{a-1} + \dfrac{1-\alpha}{a}.
\end{align*}
\end{lemma}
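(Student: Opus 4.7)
The plan is to reduce the matrix-resolvent inequality to a scalar convexity argument via the spectral theorem. First I would write $A = \sum_{k=1}^n \lambda_k v_k v_k^*$, where $\lambda_1,\dots,\lambda_n \in [0,1]$ are the eigenvalues of the PSD contraction $A$ and $v_1,\dots,v_n$ form an orthonormal eigenbasis. Setting $p_k := |\langle v_k, e_i\rangle|^2$, two identities fall out immediately: $\sum_k p_k = \|e_i\|^2 = 1$ and $\sum_k \lambda_k p_k = e_i^* A e_i = A_{i,i}$. The resolvent then diagonalizes as
\begin{align*}
e_i^*(aI - A)^{-1} e_i = \sum_{k=1}^n \frac{p_k}{a - \lambda_k}.
\end{align*}
(The case $a = 1$ is trivial since the right-hand side of the desired inequality is $+\infty$, so I would henceforth assume $a > 1$, which in particular ensures that each $a - \lambda_k > 0$.)

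The next step is to exploit that the function $f(\lambda) := (a - \lambda)^{-1}$ is convex on $[0,1]$, so for every $\lambda \in [0,1]$ the secant bound
\begin{align*}
\frac{1}{a - \lambda} \leq (1-\lambda)\cdot \frac{1}{a} + \lambda \cdot \frac{1}{a-1}
\end{align*}
holds. Applying this to each $\lambda_k$, multiplying by $p_k$, and summing gives
\begin{align*}
\sum_{k=1}^n \frac{p_k}{a - \lambda_k} \leq \frac{1 - A_{i,i}}{a} + \frac{A_{i,i}}{a-1}.
\end{align*}

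Finally I would observe that the right-hand side, viewed as a function of $A_{i,i}$, is linear with slope $\frac{1}{a-1} - \frac{1}{a} > 0$, hence monotonically increasing in $A_{i,i}$. Since $A_{i,i} \leq \alpha$ by hypothesis, replacing $A_{i,i}$ by $\alpha$ only enlarges the expression, yielding the claimed bound $\frac{\alpha}{a-1} + \frac{1-\alpha}{a}$.

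There is no real obstacle in this argument; the only subtlety worth flagging is that both the convexity step and the monotonicity step critically use $a > 1$ (and hence $\frac{1}{a-1} > \frac{1}{a}$), and that the eigenvalues must lie in $[0,1]$ for the secant inequality to go in the right direction — both assumptions being guaranteed by $A$ being a PSD contraction and $a \geq 1$.
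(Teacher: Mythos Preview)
Your proof is correct. The paper does not actually supply a proof of this lemma; it simply cites \cite[Lemma 5.3]{Leake2020Mixed} and remarks that the proof there goes through under the weaker hypotheses stated here. Your spectral-decomposition-plus-secant-bound argument is exactly the standard way to prove such resolvent inequalities, and is in fact the argument in \cite{Leake2020Mixed}: write $e_i^*(aI-A)^{-1}e_i$ as a probability-weighted average of $1/(a-\lambda_k)$, bound each term by the chord of the convex function $\lambda \mapsto (a-\lambda)^{-1}$ on $[0,1]$, and then use monotonicity in $A_{i,i}$ to replace it by $\alpha$. Nothing is missing.
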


\cite[Lemma 5.3]{Leake2020Mixed} is slightly weaker than the above lemma but its proof only uses these weaker assumptions. Now, we are ready to prove \cref{Prop:UpperBoundForPhi}.

\begin{proof}[Proof of \cref{Prop:UpperBoundForPhi}]
Let $\gamma_1 \geq \dots \geq \gamma_n$ be the roots of $\ovl{g}$ and $\delta_1 \geq \dots \geq \delta_{n-1}$ be the roots of $\ovl{\partial_ig}$. We have
\begin{align}\label{Eq:UpperBoundForPhi1}
\Phi_p^i(b\1) = \dfrac{r g^{r-1} \partial_i g}{g^r}(b\1) = r \, \dfrac{\ovl{\partial_ig}(b)}{\ovl{g}(b)} = r \, \dfrac{\prod_{i=1}^{n-1} (b - \delta_i)}{\prod_{i=1}^n (b - \gamma_i)}.
\end{align}
Since $\ovl{\partial_ig} \ll \ovl{g}$, we have $\gamma_1 \geq \delta_1 \geq \gamma_2 \geq \dots \geq \delta_{n-1} \geq \gamma_n$. Thus, by \cref{Lem:FromMarshal}, there is a $n \times n$ real symmetric matrix
\begin{align*}
A =
\begin{bmatrix}
D_\delta & v^t \\
v & v_n
\end{bmatrix}
\end{align*}
with $D_\delta = \mathrm{Diag}(\delta_1,\dots,\delta_{n-1})$, whose eigenvalues are $\gamma_1,\dots,\gamma_n$. By \cref{Lem:FromLeake1},
\begin{align}\label{Eq:UpperBoundForPhi2}
\dfrac{\prod_i^{n-1} (b - \delta_i)}{\prod_i^n (b - \gamma_i)} = \dfrac{\det(bI_{n-1} - D_\delta)}{\det(bI_n - A)} = e_n^t (bI_n - A)^{-1} e_n,
\end{align}
where $I_k$ denotes the $k \times k$ identity matrix. We have $v_n = \sum_{i=1}^n \gamma_i - \sum_{i=1}^{n-1} \delta_i = a_{\{i\}} \leq \alpha$. Also by the assumption, $\gamma_i \in [0,1]$ for all $i \in [n]$ and thus $A$ is a positive semi-definite contraction. Therefore, by \cref{Lem:FromLeake2},
\begin{align}\label{Eq:UpperBoundForPhi3}
e_n^t (bI_n - A)^{-1} e_n \leq \dfrac{\alpha}{b-1} + \dfrac{1-\alpha}{b}.
\end{align}
The lemma follows from \eqref{Eq:UpperBoundForPhi1}, \eqref{Eq:UpperBoundForPhi2} and \eqref{Eq:UpperBoundForPhi3}.
\end{proof}

The following lemma provides a subset of $\ab_g$ which will serve as the set of starting points for our iterative argument over which we will then optimize.

\begin{lemma}\label{Lem:AboveRootsOfg}
Assume that $g \in \R[z_1,\dots,z_n]$ is a multi-affine real stable polynomial such that $[z_1 \dots z_n]_g = 1$ and all the roots of $\ovl{g}$ are in $[0,1]$. Then, for every $b>1$, the point $b\1 \in \R^n$ is above the roots of $g$.
\end{lemma}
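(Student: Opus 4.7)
The plan is to exhibit a single explicit point lying in $\ab_g$, and then push it down along the main diagonal to $b\1$ by a single application of \cref{Lem:AbLemma}. The hypothesis on the roots of $\ovl g$ is exactly what makes the diagonal a safe path.

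First I would show that $M\1 \in \ab_g$ for every sufficiently large $M$. Writing $g(\zv) = \sum_{A \subseteq [n]} a_A \, \zv^{A^c}$, the hypothesis $[z_1\cdots z_n]_g = 1$ says that $a_\emptyset = 1$, so $z_1 \cdots z_n$ is the unique top-degree monomial. For any $w \geq M\1$ with $M \geq 1$ and any nonempty $A \subseteq [n]$, one has $\prod_i w_i = \bigl(\prod_{i \in A} w_i\bigr)\bigl(\prod_{i \in A^c} w_i\bigr) \geq M^{|A|} \prod_{i \in A^c} w_i$, hence
\[
g(w) \;\geq\; \prod_{i=1}^n w_i \;-\; \sum_{A \neq \emptyset} |a_A| \prod_{i \in A^c} w_i \;\geq\; \Bigl(\prod_{i=1}^n w_i\Bigr)\biggl( 1 - \tfrac{1}{M} \sum_{A \neq \emptyset} |a_A| \biggr).
\]
Choosing $M \geq \max\bigl(b,\; 1 + \sum_{A \neq \emptyset} |a_A|\bigr)$ makes this strictly positive, so $g(w) > 0$ for every $w \geq M\1$; in particular $M\1 \in \ab_g$.

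Next I would set $u := M\1$ and $v := (M-b)\1 \in \R_{\geq 0}^n$, so that $u - v = b\1$. Along the segment $u - tv = (M - t(M-b))\1$ for $t \in [0,1]$, the scalar $M - t(M-b)$ sweeps the interval $[b, M]$, which lies entirely in $(1, \infty)$ since $b > 1$. Restricting $g$ to the diagonal gives $g(u - tv) = \ovl g(M - t(M-b))$, and $\ovl g$ is monic of degree $n$ (its leading coefficient is $a_\emptyset = 1$) with all roots in $[0,1]$; hence $\ovl g > 0$ on $(1, \infty)$, so $g(u-tv) \neq 0$ for every $t \in [0,1]$. \cref{Lem:AbLemma} then yields $b\1 = u - v \in \ab_g$, which is exactly what we want.

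The only real obstacle is producing some initial point of $\ab_g$, and that is handled by the elementary leading-term estimate above; after that, everything is a single invocation of \cref{Lem:AbLemma} along the main diagonal, where the condition that the roots of $\ovl g$ lie in $[0,1]$ exactly guarantees non-vanishing along $[b,M] \subset (1,\infty)$.
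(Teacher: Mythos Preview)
Your proof is correct, but it takes a genuinely different route from the paper's. The paper argues by induction on $n$: by interlacing, $\ovl{\partial_n g}$ again has all its roots in $[0,1]$, so the induction hypothesis places $b\1$ above the roots of $\partial_n g$; since $\partial_n g > 0$ there, $g$ is increasing in $z_n$ (and by symmetry in every coordinate) above $b\1$, whence $g(w) \geq \ovl g(b) > 0$ for all $w \geq b\1$. Your argument instead produces an explicit starting point $M\1 \in \ab_g$ by a crude leading-term estimate and then slides it down the diagonal to $b\1$ via a single application of \cref{Lem:AbLemma}, using only that $\ovl g$ has no roots in $(1,\infty)$. The paper's proof is more self-contained---it never invokes \cref{Lem:AbLemma}, whose own proof is deferred to the appendix and relies on the convexity of hyperbolicity cones---while your approach is shorter and more conceptual once that lemma is in hand: one point in $\ab_g$ plus non-vanishing along a segment is all you need.
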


\begin{proof}
We proceed by induction on $n$. The case $n=1$ is obvious. Suppose that the statement is true for $n-1$. Consider the polynomial $\partial_n g \in \R[z_1,\dots,z_{n-1}]$. Note that $[z_1 \dots z_{n-1}]_{\partial_n g} = 1$. Also, since $\ovl{\partial_n g} \ll \ovl{g}$ and $\mathrm{deg}(\ovl{\partial_n g}) = \mathrm{deg}(\ovl{g}) - 1$, all the roots of $\ovl{\partial_ng}$ are in the interval $[0,1]$. Therefore, by the induction hypothesis, $b\1$ is above the roots of $\partial_n g$.

Since $\lambda_i(\ovl{\partial_n g}) \leq 1$ for $i=1,\dots,n-1$, we have $\ovl{\partial_n g}(b) > 0$. Therefore, since the sign of a polynomial does not change above its roots, $\partial_n g$ is positive above $b\1$. Hence $g$ is increasing in $z_n$ above $b\1$. The same argument works for all the other variables. Therefore, for every $w \in \R^n$ with $w \geq b\1$, we have
\begin{align*}
g(w) \geq g(b\1) = \ovl{g}(b) > 0.
\end{align*}
The last inequality holds since $\lambda_i(\ovl{g}) \leq 1$ for $i=1,\dots,n$. Thus $g(w) \neq 0$ for every $w \geq b\1$ which means $b\1 \in \ab_g$.
\end{proof}

\begin{lemma}[Lemma 5.5 of \cite{Leake2020Mixed}]\label{Lem:Optimization}
For $\alpha,\beta \in [0,1]$, we have
\begin{align*}
\inf_{a>1} \left( a - \dfrac{\beta}{\dfrac{\alpha}{a-1} + \dfrac{1-\alpha}{a}} \right) =
\begin{dcases}
\big( \sqrt{\alpha\beta} + \sqrt{(1-\alpha)(1-\beta)} \, \big)^2 & , \alpha \leq \beta \\
1 & , \alpha \geq \beta
\end{dcases}.
\end{align*}
\end{lemma}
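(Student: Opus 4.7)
The plan is to reduce the one-variable optimization to a rational function in which an AM--GM bound does the main work. First I would substitute $t = a-1$ (so $t>0$) and combine everything over the common denominator $t+\alpha$ to rewrite
\begin{align*}
f(a) \;=\; a - \dfrac{\beta}{\dfrac{\alpha}{a-1} + \dfrac{1-\alpha}{a}} \;=\; (1+t) - \dfrac{\beta(1+t)t}{t+\alpha} \;=\; \dfrac{(1+t)\bigl[(1-\beta)t+\alpha\bigr]}{t+\alpha}.
\end{align*}
A short polynomial division then gives the clean expression
\begin{align*}
h(t) \;=\; (1-\beta)(t+\alpha) \;+\; \dfrac{\alpha\beta(1-\alpha)}{t+\alpha} \;+\; \bigl[(1-\alpha)(1-\beta)+\alpha\beta\bigr].
\end{align*}
This is the one step I expect to be the main obstacle: it is a routine but error-prone calculation, and getting the constant term exactly right (using the identity $1-\alpha-\beta+2\alpha\beta=(1-\alpha)(1-\beta)+\alpha\beta$) is essential for the bound to come out in the desired form.

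Once the expression is in this shape, the rest is straightforward. For the case $\alpha\le\beta$ (assuming $\beta<1$; the edge case $\beta=1$ can be handled by a limiting argument), I would apply AM--GM to the first two summands to get
\begin{align*}
(1-\beta)(t+\alpha) + \dfrac{\alpha\beta(1-\alpha)}{t+\alpha} \;\ge\; 2\sqrt{\alpha\beta(1-\alpha)(1-\beta)},
\end{align*}
and combine with the constant term to recognize the right-hand side as $\bigl(\sqrt{\alpha\beta}+\sqrt{(1-\alpha)(1-\beta)}\bigr)^{2}$. Equality in AM--GM occurs at $(t+\alpha)^{2}=\alpha\beta(1-\alpha)/(1-\beta)$, and a short algebraic check shows the corresponding $t$ is strictly positive precisely when $\alpha\le\beta$, so the bound is attained.

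For the case $\alpha\ge\beta$, I would differentiate: $h'(t) = (1-\beta) - \alpha\beta(1-\alpha)/(t+\alpha)^{2}$ is increasing in $t$, and evaluating at $t=0$ gives $h'(0) = (1-\beta) - \beta(1-\alpha)/\alpha$, which is non-negative exactly when $\alpha\ge\beta$. Thus $h$ is non-decreasing on $(0,\infty)$, so the infimum is $\lim_{t\to 0^{+}} h(t) = 1$, matching the claimed formula. Finally I would note that the two cases agree at $\alpha=\beta$, where $\bigl(\sqrt{\alpha\beta}+\sqrt{(1-\alpha)(1-\beta)}\bigr)^{2} = \bigl(\alpha+(1-\alpha)\bigr)^{2}=1$, so the piecewise definition is consistent.
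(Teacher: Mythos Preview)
Your proof is correct. The substitution $t=a-1$, the rewriting as $h(t)=(1-\beta)(t+\alpha)+\alpha\beta(1-\alpha)/(t+\alpha)+\bigl[(1-\alpha)(1-\beta)+\alpha\beta\bigr]$, the AM--GM step, and the monotonicity argument for $\alpha\ge\beta$ all check out. One small imprecision: the AM--GM optimum $t$ is strictly positive when $\alpha<\beta$ (not $\alpha\le\beta$), but since you verify that the two formulas agree at $\alpha=\beta$ this causes no problem.

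As for comparison with the paper: there is nothing to compare. The paper does not prove this lemma at all; it simply quotes it as Lemma~5.5 of Leake--Ravichandran and uses it as a black box in the proof of \eqref{Eq:UpperBoundForMaxrootOfg_r}. Your argument therefore supplies a complete, self-contained proof where the paper gives only a citation.
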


We have generalized all the tools that are used in \cite{Leake2020Mixed} and so the proof of \cite[Theorem 5.6]{Leake2020Mixed} works for \eqref{Eq:UpperBoundForMaxrootOfg_r}. For the sake of completeness we repeat the argument here.

\begin{Th}
Let $g_r$ be as in \cref{Prop:CombinatorialExpressionOfg_r}. We have
\begin{align*}
\m(g_r) \leq \bigg( \sqrt{\dfrac{1}{r} - \dfrac{\alpha}{r-1}} + \sqrt{\alpha} \bigg)^2.
\end{align*}
\end{Th}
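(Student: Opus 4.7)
The plan is to turn the combinatorial identity in \cref{Prop:CombinatorialExpressionOfg_r} into a root bound via the multivariate barrier method. Since $g_r$ is a positive scalar multiple of the diagonalization of $\partial_n^{r-1} \cdots \partial_1^{r-1} g^r$, it suffices to exhibit, for arbitrary $b > 1$, a point of the form $(b-\delta)\1$ above the roots of this iterated derivative with $\delta > 0$ to be determined. The starting point $u^{(0)} := b\1$ lies in $\ab_g = \ab_{g^r}$ by \cref{Lem:AboveRootsOfg}. I then build up the iterated derivative one factor at a time: set $p_0 := g^r$, and for $j = 1,\dots,n$ put $p_j := \partial_j^{r-1} p_{j-1}$ and $u^{(j)} := u^{(j-1)} - \delta\, e_j$, using a single common step size $\delta$.

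The validity of each shift, namely $u^{(j)} \in \ab_{p_j}$, will be established by combining \cref{Prop:!->delta} with \cref{Prop:Delta->Phi}. \cref{Prop:!->delta} requires $\delta \leq \tfrac{(r-1)^2}{r} \bigl( \Phi_{p_{j-1}}^j(u^{(j-1)}) - 1/(u_j^{(j-1)} - \lambda_r) \bigr)^{-1}$, where $\lambda_r$ is the smallest root of the univariate restriction of $p_{j-1}$ to the $j$-th axis through $u^{(j-1)}$. Two uniform estimates feed this bound. First, \cref{Prop:LowerBoundForlambda_r} gives $\lambda_r \geq 0$, and since the earlier shifts affect coordinates other than $j$ one has $u_j^{(j-1)} = b$, hence $1/(u_j^{(j-1)} - \lambda_r) \geq 1/b$. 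Second, iterating the barrier-monotonicity clause of \cref{Prop:Delta->Phi} and then invoking \cref{Prop:UpperBoundForPhi} yields
\begin{align*}
\Phi_{p_{j-1}}^j(u^{(j-1)}) \leq \Phi_{p_{j-2}}^j(u^{(j-2)}) \leq \cdots \leq \Phi_{g^r}^j(b\1) \leq r\!\left(\frac{\alpha}{b-1} + \frac{1-\alpha}{b}\right) =: B.
\end{align*}
Therefore any $\delta \leq (r-1)^2 / \bigl( r(B - 1/b) \bigr)$ is admissible at every stage, and taking $\delta$ equal to this value gives $(b-\delta)\1 \in \ab_{p_n}$, whence $\m(g_r) \leq b - \delta$.

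To finish, a direct computation with the substitutions $\alpha' := r\alpha/(r-1)$ and $\beta := (r-1)/r$ shows $B - 1/b = (r-1)\bigl( \alpha'/(b-1) + (1-\alpha')/b \bigr)$, so $b - \delta$ becomes $b - \beta \bigl( \alpha'/(b-1) + (1-\alpha')/b \bigr)^{-1}$. The hypothesis $\alpha \leq (r-1)^2/r^2$ is exactly $\alpha' \leq \beta$, placing us in the first branch of \cref{Lem:Optimization}, and the infimum over $b > 1$ equals
\begin{align*}
\bigl( \sqrt{\alpha'\beta} + \sqrt{(1-\alpha')(1-\beta)} \bigr)^2 = \bigl( \sqrt{\alpha} + \sqrt{1/r - \alpha/(r-1)} \bigr)^2,
\end{align*}
matching the target. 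The main obstacle is the inductive bookkeeping in the barrier cascade: it is precisely the monotonicity clause of \cref{Prop:Delta->Phi} that lets a single $b$-dependent step size work at every stage, and without this cascade the barrier would have to be re-estimated from scratch after each derivative. Once the cascade is in place, the remaining work is the algebraic recasting into the form demanded by \cref{Lem:Optimization}.
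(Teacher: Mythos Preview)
Your proof is correct and follows the same barrier-cascade approach as the paper. The only cosmetic difference is that the paper uses a variable step size $\delta_k$ at each stage (taking the maximal value allowed by \cref{Prop:!->delta}) and then observes $\delta_k \geq \delta$ to conclude $(b-\delta)\1 \geq w_n$, whereas you work with the single step size $\delta$ from the outset; the inductive bookkeeping and the final optimization via \cref{Lem:Optimization} are identical.
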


\begin{proof}
Fix $b>1$ and define $w_0 \in \R^n$ as $w_0 = b\1$. By \cref{Lem:AboveRootsOfg}, $w_0$ is above the roots of $g^r$. Let $p_0 = g^r$ and iteratively define
\begin{align*}
p_k = \partial_k^{r-1} p_{k-1}, \quad k=1,\dots,n,
\end{align*}
and
\begin{align*}
\delta_k = \dfrac{(r-1)^2}{r} \left( \dfrac{1}{\Phi_{p_{k-1}}^k\big(w_{k-1}\big) - \dfrac{1}{b}} \right) \qquad \text{and} \qquad w_k = w_{k-1} - \delta_k e_k.
\end{align*}
By \cref{Prop:Delta->Phi}, \cref{Prop:!->delta} and \cref{Prop:LowerBoundForlambda_r}, for every $k \in [n]$ we have $w_k \in \ab_{p_k}$ and
\begin{align*}
\forall i \in [n] \ : \ \Phi_{p_k}^i \big(w_{k-1} - \delta_k e_k\big) \leq \Phi_{p_{k-1}}^i \big(w_{k-1}\big).
\end{align*}
It follows from \cref{Prop:UpperBoundForPhi} that
\begin{align*}
\delta_k \geq \dfrac{(r-1)^2}{r} \left( \dfrac{1}{\Phi_p^k(b\1) - \dfrac{1}{b}} \right) \geq \dfrac{(r-1)^2}{r} \left( \dfrac{1}{r \bigg( \dfrac{\alpha}{b-1} + \dfrac{1-\alpha}{b} \bigg) - \dfrac{1}{b}} \right) =: \delta.
\end{align*}
Hence $(b-\delta)\1 \geq w_n$ and since $w_n \in \ab_{p_n}$, we have $(b-\delta)\1 \in \ab_{p_n}$. Also, by \cref{Prop:CombinatorialExpressionOfg_r}, we have $g_r = \ovl{p}_n$. Therefore,
\begin{align*}
\m(g_r)
&\leq \inf_{b>1} \left\{ b - \dfrac{(r-1)^2}{r} \left( \dfrac{1}{r \bigg( \dfrac{\alpha}{b-1} + \dfrac{1-\alpha}{b} \bigg) - \dfrac{1}{b}} \right) \right\} \\
&= \inf_{b>1} \left\{ b - \dfrac{r-1}{r} \left( \dfrac{1}{\dfrac{r\alpha/(r-1)}{b-1} + \dfrac{1-r\alpha/(r-1)}{b}} \right) \right\}.
\end{align*}
By \cref{Lem:Optimization}, if $(r-1)^2/r^2 \geq \alpha$ then
\begin{align*}
\m(g_r) \leq \bigg( \sqrt{\dfrac{1}{r} - \dfrac{\alpha}{r-1}} + \sqrt{\alpha} \bigg)^2.
\end{align*}
\end{proof}

\subsection{Paving Property for Polynomials, Second Version}\label{Subsec:ProofOfZeroDiagonalPolynomialPaving}

The following proposition is an extension of \cite[Corollary 26]{Tao2013Real}.

\begin{Prop}\label{Prop:ZeroDiagonalPolynomialPaving}
Let $r \geq 4$ be an integer and $\Lambda \in R_+$. Assume that $g \in \R[z_1,\dots,z_n]$ is a multi-affine real stable polynomial and $g(\zv) = \sum_{A \subseteq [n]} a_A \, \zv^{A^c}$. If all the roots of $\ovl{g}$ are in $[-\Lambda,\Lambda]$, $a_\emptyset=1$ and $a_{\{i\}}=0$ for $i = 1,\dots,n$, then there exists a partition $S_1,\dots,S_{r^2}$ of $[n]$ such that
\begin{align*}
\forall i \in [r^2] \ : \ \M \big( \, \ovl{\partial^{S_i^c}g} \, \big) \leq \left( \dfrac{r-2}{r(r-1)} + 2 \, \sqrt{\dfrac{r-2}{r(r-1)}} \, \right) \Lambda,
\end{align*}
where $\M(.)$ is the maximum absolute value of roots.
\end{Prop}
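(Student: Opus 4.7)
My plan is to reduce \cref{Prop:ZeroDiagonalPolynomialPaving} to two applications of \cref{Th:PolynomialPaving}: one that controls the \emph{largest} roots of each restricted polynomial, and one that controls the \emph{smallest} roots. The common refinement of the two resulting partitions then yields a partition into $r^2$ pieces which simultaneously bounds the absolute values of all roots.

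For the first application I would use the affine substitution
\begin{align*}
\tilde g(\zv) \,:=\, \frac{1}{(2\Lambda)^n}\, g(2\Lambda\zv - \Lambda\1).
\end{align*}
Since each coordinate map $z_i \mapsto 2\Lambda z_i - \Lambda$ preserves the upper half-plane, $\tilde g$ is multi-affine and real stable, and $\ovl{\tilde g}$ has all its roots in $[0,1]$. Expanding $g(2\Lambda\zv - \Lambda\1) = \sum_A a_A\prod_{i\notin A}(2\Lambda z_i - \Lambda)$ and reading off coefficients, I find that in $\tilde g$ the coefficient of $\zv^{[n]}$ equals $a_\emptyset = 1$, while the coefficient of $\zv^{\{i\}^c}$ equals
\begin{align*}
\dfrac{-\Lambda(2\Lambda)^{n-1}\, a_\emptyset + (2\Lambda)^{n-1}\, a_{\{i\}}}{(2\Lambda)^n} \,=\, -\tfrac12,
\end{align*}
using $a_{\{i\}}=0$. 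Because $1/2 \leq (r-1)^2/r^2$ whenever $r \geq 2 + \sqrt2$, in particular for $r \geq 4$, \cref{Th:PolynomialPaving} applies to $\tilde g$ with $\alpha = 1/2$ and produces a partition $\{S_1,\dots,S_r\}$ of $[n]$ such that $\m(\ovl{\partial^{S_i^c}\tilde g}) \leq U$ where $U = \big(\sqrt{1/r - 1/(2(r-1))} + \sqrt{1/2}\big)^2$. Setting $t := \sqrt{(r-2)/(r(r-1))}$, the identity $1/r - 1/(2(r-1)) = t^2/2$ gives $U = (t+1)^2/2$, and the chain-rule identity $\ovl{\partial^{S_i^c}\tilde g}(x) = c\cdot\ovl{\partial^{S_i^c}g}(2\Lambda x - \Lambda)$ translates the bound on $\tilde g$ into
\begin{align*}
\m\big(\ovl{\partial^{S_i^c}g}\big) \,\leq\, \Lambda(2U - 1) \,=\, (t^2 + 2t)\Lambda \,=:\, B,
\end{align*}
which is exactly the quantity appearing in the proposition.

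For the lower bound I would run the same argument on the reflected polynomial $h(\zv) := (-1)^n g(-\zv)$, which remains multi-affine real stable because $g$ has real coefficients (a zero of $g(-\zv)$ with every $z_j \in \mathbb{H}$ would conjugate to a zero of $g$ in $\mathbb{H}^n$). A direct computation shows that $h$ satisfies the same hypotheses as $g$ (leading coefficient $1$, vanishing $a_{\{i\}}$, roots of $\ovl h$ in $[-\Lambda,\Lambda]$), so the previous step supplies a partition $\{T_1,\dots,T_r\}$ with $\m(\ovl{\partial^{T_j^c}h}) \leq B$. Since $\ovl{\partial^{T_j^c}h}(x) = \pm\,\ovl{\partial^{T_j^c}g}(-x)$, this is equivalent to the smallest root of $\ovl{\partial^{T_j^c}g}$ being at least $-B$. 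I then take the common refinement $\{S_i \cap T_j : 1 \leq i, j \leq r\}$, padded with empty sets if necessary to have exactly $r^2$ parts. For each $(i,j)$ one has $(S_i \cap T_j)^c \supseteq S_i^c$ and $(S_i \cap T_j)^c \supseteq T_j^c$, so $\partial^{(S_i\cap T_j)^c}g$ arises from both $\partial^{S_i^c}g$ and $\partial^{T_j^c}g$ by further partial differentiations. Combining $\partial_k p \ll p$ with the implication $q \ll p \Rightarrow \ovl q \ll \ovl p$ from \cref{Subsec:StablePolynomials}, each differentiation yields an interlacing diagonalized polynomial of degree one less, so the convex hull of the roots can only shrink. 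Hence $\m(\ovl{\partial^{(S_i\cap T_j)^c}g}) \leq B$ and symmetrically the smallest root is at least $-B$, giving $\M(\ovl{\partial^{(S_i\cap T_j)^c}g}) \leq B$ as required. The main technical point is the coefficient computation for $\tilde g$ and the algebraic identity $2U - 1 = t^2 + 2t$; the reflection and refinement steps are then routine consequences of the closure and interlacing properties of real stable polynomials.
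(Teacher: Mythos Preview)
Your proof is correct, but it differs from the paper's in one structural point. The paper proceeds \emph{hierarchically}: it first applies \cref{Lem:ZeroDiagonalPolynomialPavingStep2} (which is exactly your shift-and-scale reduction to \cref{Th:PolynomialPaving} with $\alpha=1/2$ after rescaling) to $g$ itself, obtaining $S_1,\dots,S_r$ with $\m(\ovl{\partial^{S_i^c}g})\le c$. Then, \emph{for each $i$ separately}, it applies the same lemma to the reflection $f_i(\zv)=(-1)^n\partial^{S_i^c}g(-\zv)$, producing a sub-partition $S_{i,1},\dots,S_{i,r}$ of $S_i$; the lower bound $-c$ comes from this second step and the upper bound $c$ is inherited from the first step by interlacing. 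The resulting $r^2$ pieces are $\{S_{i,j}\}$ rather than a common refinement.

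By contrast, you apply the paving theorem \emph{twice to the same ground set $[n]$}, once to $g$ and once to $h(\zv)=(-1)^n g(-\zv)$, obtaining two independent $r$-partitions $\{S_i\}$ and $\{T_j\}$, and then intersect. Interlacing transfers both bounds to each block $S_i\cap T_j$. This is a perfectly valid alternative: the key observation that additional $\partial_k$'s only shrink the convex hull of the roots of the diagonalization (via $\partial_k p\ll p$ and $q\ll p\Rightarrow \ovl q\ll\ovl p$, together with the fact that the leading coefficient $a_\emptyset=1$ forces the degree to drop by one each time) is exactly what the paper uses as well. Your route is more symmetric in its treatment of upper and lower bounds and avoids checking that each intermediate $\partial^{S_i^c}g$ again has vanishing ``diagonal'' coefficients; the paper's nested route guarantees that the $r^2$ parts are automatically disjoint without needing to pad with empty sets. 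Either way the arithmetic is identical and yields the stated bound $\big(\tfrac{r-2}{r(r-1)}+2\sqrt{\tfrac{r-2}{r(r-1)}}\big)\Lambda$.
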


We proceed as in \cite{Tao2013Real}. The following two lemmas are generalizations of \cite[Corollary 24]{Tao2013Real} and \cite[Corollary 25]{Tao2013Real}.

\begin{lemma}\label{Lem:ZeroDiagonalPolynomialPavingStep1}
Let $r \in \Z$ and $\alpha,\Lambda \in R_+$ be such that $r \geq 2$ and $\alpha \leq \Lambda (r-1)^2/r^2$. Assume that $g \in \R[z_1,\dots,z_n]$ is a multi-affine real stable polynomial and $g(\zv) = \sum_{A \subseteq [n]} a_A \, \zv^{A^c}$. If all the roots of $\ovl{g}$ are in the interval $[0,\Lambda]$, $a_\emptyset=1$ and $|a_{\{i\}}| \leq \alpha$ for $i = 1,\dots,n$, then there exists a partition $S_1,\dots,S_r$ of $[n]$ such that
\begin{align*}
\forall i \in [r] \ : \ \m \big( \, \ovl{\partial^{S_i^c}g} \, \big) \leq \bigg( \sqrt{\dfrac{\Lambda}{r} - \dfrac{\alpha}{r-1}} + \sqrt{\alpha} \bigg)^2.
\end{align*}
\end{lemma}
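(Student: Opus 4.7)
The plan is to reduce this statement to \cref{Th:PolynomialPaving} by a simple multiplicative rescaling of the variables that sends the interval $[0,\Lambda]$ to $[0,1]$. Define
\[
\tilde g(\zv) := \Lambda^{-n} g(\Lambda \zv).
\]
Since scaling the variables by a positive real preserves membership in the upper half-plane, $\tilde g$ remains a multi-affine real stable polynomial. Expanding the coefficients gives $\tilde g(\zv) = \sum_{A \subseteq [n]} a_A \Lambda^{-|A|} \zv^{A^c}$, so the new coefficients are $\tilde a_A = a_A \Lambda^{-|A|}$. In particular $\tilde a_\emptyset = 1$ and $|\tilde a_{\{i\}}| \le \alpha/\Lambda$.

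For the diagonalization, $\overline{\tilde g}(x) = \Lambda^{-n} \overline{g}(\Lambda x)$, so the roots of $\overline{\tilde g}$ are exactly the roots of $\overline{g}$ divided by $\Lambda$, hence all lie in $[0,1]$. The hypothesis $\alpha \le \Lambda(r-1)^2/r^2$ is precisely the statement that $\tilde\alpha := \alpha/\Lambda$ satisfies the constraint $\tilde\alpha \le (r-1)^2/r^2$ required by \cref{Th:PolynomialPaving}. Applying that theorem to $\tilde g$ yields a partition $\{S_1,\dots,S_r\}$ of $[n]$ with
\[
\m\bigl(\overline{\partial^{S_i^c}\tilde g}\bigr) \le \Bigl(\sqrt{1/r - \tilde\alpha/(r-1)} + \sqrt{\tilde\alpha}\,\Bigr)^2 \quad \text{for each } i \in [r].
\]

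To transfer this bound back to $g$, I would use the chain rule: since $\partial^S [g(\Lambda\zv)] = \Lambda^{|S|}(\partial^S g)(\Lambda\zv)$, we get $\overline{\partial^S \tilde g}(x) = \Lambda^{|S|-n} \overline{\partial^S g}(\Lambda x)$, so the roots of $\overline{\partial^S \tilde g}$ are exactly $1/\Lambda$ times the roots of $\overline{\partial^S g}$. Multiplying the bound above by $\Lambda$ and using $\Lambda (\sqrt{a} + \sqrt{b})^2 = (\sqrt{\Lambda a} + \sqrt{\Lambda b})^2$ with $a = 1/r - \alpha/(\Lambda(r-1))$ and $b = \alpha/\Lambda$ gives exactly
\[
\m\bigl(\overline{\partial^{S_i^c} g}\bigr) \le \Bigl(\sqrt{\Lambda/r - \alpha/(r-1)} + \sqrt{\alpha}\,\Bigr)^2,
\]
which is the desired conclusion.

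Since the argument is just a rescaling, there is no substantive obstacle; the only care needed is to check that stability, multi-affineness, the normalization $a_\emptyset = 1$, the bound on the linear coefficients, and the root interval all transform as claimed, and that the constraint $\alpha \le \Lambda(r-1)^2/r^2$ is exactly what is needed for the hypothesis of \cref{Th:PolynomialPaving} to hold after rescaling.
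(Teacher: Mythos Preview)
Your proof is correct and takes essentially the same approach as the paper: the paper's own proof is the single line ``Define $f(\zv) = \Lambda^{-n} g(\Lambda \cdot \zv)$ and apply \cref{Th:PolynomialPaving} to the polynomial $f$,'' and you have spelled out the verification details.
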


\begin{proof}
Define $f(\zv) = \Lambda^{-n} \, g(\Lambda \cdot \zv)$ and apply \cref{Th:PolynomialPaving} to the polynomial $f$
\end{proof}


\begin{lemma}\label{Lem:ZeroDiagonalPolynomialPavingStep2}
Let $r \in \Z$ and $\Lambda,\Gamma \in \R_+$ be such that $r \geq 2$ and $(r-1)^2/r^2 \geq \Lambda/(\Lambda+\Gamma)$. Assume that $g \in \R[z_1,\dots,z_n]$ is a multi-affine real stable polynomial and $g(\zv) = \sum_{A \subseteq [n]} a_A \, \zv^{A^c}$. If all the roots of $\ovl{g}$ are in $[-\Lambda,\Gamma]$, $a_\emptyset=1$ and $a_{\{i\}} = 0$ for $i = 1,\dots,n$, then there exists a partition $S_1,\dots,S_r$ of $[n]$ such that
\begin{align*}
\forall i \in [r] \ : \ -\Lambda \cdot \1 \leq \lambda \big( \, \ovl{\partial^{S_i^c}g} \, \big) \leq \Bigg[ \bigg( \sqrt{\dfrac{\Lambda+\Gamma}{r} - \dfrac{\Lambda}{r-1}} + \sqrt{\Lambda} \bigg)^2 \! - \Lambda \Bigg] \cdot \1,
\end{align*}
where $\lambda(\cdot)$ denotes the vector of the roots of a polynomial in the non-increasing order.
\end{lemma}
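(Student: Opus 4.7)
The plan is to reduce \cref{Lem:ZeroDiagonalPolynomialPavingStep2} to the previous lemma \cref{Lem:ZeroDiagonalPolynomialPavingStep1} via an affine translation of variables. Concretely, I would introduce
\[
f(\zv) := g(\zv - \Lambda \cdot \1).
\]
First I would verify that $f$ inherits the relevant properties of $g$: it is multi-affine (a constant shift in each variable does not change degrees) and real stable (if all $z_i$ have positive imaginary part then so do the $z_i - \Lambda$), and its diagonalization satisfies $\ovl{f}(x) = \ovl{g}(x - \Lambda)$, so all roots of $\ovl{f}$ lie in $[0, \Lambda + \Gamma]$.

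Next I would compute the coefficients in the representation $f(\zv) = \sum_{A \subseteq [n]} f_A \, \zv^{A^c}$. Expanding $\prod_{i \in A^c}(z_i - \Lambda)$ and re-collecting yields
\[
f_D = \sum_{A \subseteq D} a_A \, (-\Lambda)^{|D|-|A|},
\]
so $f_\emptyset = a_\emptyset = 1$ and, using $a_{\{i\}} = 0$, $f_{\{i\}} = -\Lambda$ for every $i$; in particular $|f_{\{i\}}| = \Lambda$. Now \cref{Lem:ZeroDiagonalPolynomialPavingStep1} applies to $f$ with parameters $r$, $\alpha = \Lambda$, and bound $\Lambda + \Gamma$ in place of the lemma's $\Lambda$: the admissibility hypothesis $\alpha \le (\Lambda+\Gamma)(r-1)^2/r^2$ is exactly our standing assumption $(r-1)^2/r^2 \ge \Lambda/(\Lambda+\Gamma)$. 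This produces a partition $\{S_1, \ldots, S_r\}$ of $[n]$ with
\[
\m\bigl(\ovl{\partial^{S_i^c} f}\bigr) \;\le\; M := \Bigl(\sqrt{\tfrac{\Lambda+\Gamma}{r} - \tfrac{\Lambda}{r-1}} + \sqrt{\Lambda}\Bigr)^{\!2} \quad \text{for every } i \in [r].
\]

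The final step is to transport this bound back to $g$. Since $(\partial^{S_i^c} f)(\zv) = (\partial^{S_i^c} g)(\zv - \Lambda \1)$, we have $\ovl{\partial^{S_i^c} f}(x) = \ovl{\partial^{S_i^c} g}(x - \Lambda)$, so the roots of $\ovl{\partial^{S_i^c} g}$ are those of $\ovl{\partial^{S_i^c} f}$ shifted by $-\Lambda$; the upper bound $M - \Lambda$ on the largest root is then immediate. The step I expect to require a bit more care is the \emph{lower} bound, equivalently showing that every root of $\ovl{\partial^{S_i^c} f}$ is non-negative. My plan is to obtain this by iterating interlacing: writing $S_i^c = \{j_1, \ldots, j_k\}$ and $p_\ell := \partial_{j_\ell} \cdots \partial_{j_1} f$, each $\ovl{p_\ell} \ll \ovl{p_{\ell-1}}$ (combining $\partial_j p \ll p$ with the fact recalled in \cref{Subsec:StablePolynomials} that $q \ll p$ implies $\ovl{q} \ll \ovl{p}$). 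Since $f$ is multi-affine with $f_\emptyset = 1$, the top-degree term of $\ovl{p_\ell}$ is $x^{n-\ell}$ with coefficient $1$, so the degree drops by exactly $1$ at each step, the interlacing is proper, and the minimum root cannot decrease along the chain. Hence the smallest root of $\ovl{\partial^{S_i^c} f} = \ovl{p_k}$ is at least the smallest root of $\ovl{f}$, which is $\ge 0$, completing the argument.
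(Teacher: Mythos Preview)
Your proposal is correct and follows essentially the same approach as the paper: translate by $\Lambda$ via $f(\zv)=g(\zv-\Lambda\cdot\1)$ and apply \cref{Lem:ZeroDiagonalPolynomialPavingStep1} with $\alpha=\Lambda$ and the role of $\Lambda$ there played by $\Lambda+\Gamma$. You simply fill in more details than the paper's one-line proof, in particular the interlacing chain justifying the lower bound $\lambda\big(\,\ovl{\partial^{S_i^c}g}\,\big)\ge -\Lambda\cdot\1$, which the paper leaves implicit (it is the same argument used in the proof of \cref{Lem:PositiveRootedPolynomialCoefficientSigns}).
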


\begin{proof}
Apply \cref{Lem:ZeroDiagonalPolynomialPavingStep1} with $g(\zv)$ replaced by $g(\zv - \Lambda\cdot\1)$, $\Lambda$ replaced by $\Lambda+\Gamma$, and $\alpha$ and $\varepsilon$ both set equal to $\Lambda$.
\end{proof}

Now we are ready to prove \cref{Prop:ZeroDiagonalPolynomialPaving}.

\begin{proof}[Proof of \cref{Prop:ZeroDiagonalPolynomialPaving}]
Set
\begin{align*}
c := \bigg( \sqrt{\dfrac{2\Lambda}{r} - \dfrac{\Lambda}{r-1}} + \sqrt{\Lambda} \bigg)^2 \! - \Lambda = \left( \dfrac{r-2}{r(r-1)} + 2 \, \sqrt{\dfrac{r-2}{r(r-1)}} \, \right) \Lambda.
\end{align*}
The roots of $\ovl{g}$ lie between $-\Lambda$ and $\Lambda$. Hence, by \cref{Lem:ZeroDiagonalPolynomialPavingStep2}, there is a partition $S_1,\dots,S_r$ of $[n]$ such that the roots of each $\ovl{\partial^{S_i^c} g}$ lie between $-\Lambda$ and $c$. Note that $c \leq \Lambda$ and the roots of each of these polynomials are in $[-\Lambda,\Lambda]$. For each $i \in [r]$, define the polynomial $f_i$ as $f_i(\zv) = (-1)^n \partial^{S_i^c} g(-\zv)$. Each $f_i$ satisfies the assumptions of \cref{Lem:ZeroDiagonalPolynomialPavingStep2} with $\Gamma=\Lambda$ (the roots of $\ovl{f}_i$ are in $[-\Lambda,\Lambda]$ since its roots are negations of the roots of $\ovl{\partial^{S_i^c} g}$). Therefore, for every $i \in [r]$, there is a partition $S_{i,1},\dots,S_{i,r}$ of $S_i$ such that the roots of each $\ovl{\partial^{S_{i,j}^c} f_i}$ lie between $-\Lambda$ and $c$.

By regarding each $S_{i,j}$ as a subset of $[n]$, we have $\partial^{S_{i,j}^c} f_i(\zv) = (-1)^n \partial^{S_{i,j}^c} g(-\zv)$. Thus the roots of $\ovl{\partial^{S_{i,j}^c} g}$ are negations of the roots of $\ovl{\partial^{S_{i,j}^c} f_i}$ and hence the roots of each $\ovl{\partial^{S_{i,j}^c} g}$ lie between $-c$ and $\Lambda$. Therefore,
\begin{align*}
\lambda_\sharp\big(\,\ovl{\partial^{S_{i,j}^c} g}\,\big) \geq -c,
\end{align*}
where $\lambda_\sharp(p)$ denotes the least root of polynomial $p$. Also, by interlacing,
\begin{align*}
\lambda_1 \big(\,\ovl{\partial^{S_{i,j}^c} g}\,\big) \leq \lambda_1 \big(\,\ovl{\partial^{S_i^c} g}\,\big) \leq c.
\end{align*}
Therefore,
\begin{align*}
\M \big( \, \ovl{\partial^{S_{i,j}^c}g} \, \big) \leq c = \left( \dfrac{r-2}{r(r-1)} + 2 \, \sqrt{\dfrac{r-2}{r(r-1)}} \, \right) \Lambda
\end{align*}
and $\big( S_{i,j} \big)_{i,j \in [r]}$ is the desired partition.
\end{proof}

\section{Paving Property for Strongly Rayleigh Processes}\label{Sec:ProofOfS.R.P.PavingEntropyVersion}

We will prove \cref{Th:S.R.P.PavingEntropyVersion} in \cref{Subsec:StronglyRayleighPaving}. We will introduce the notion of kernel polynomial of strongly Rayleigh processes in \cref{Subsec:S.R.Processes}. This notion will be essential in the proof of \cref{Th:S.R.P.PavingEntropyVersion}. In \cref{Subsec:EntropyLowerBound}, we will prove an entropy lower bound for strongly Rayleigh processes in terms of the roots of the kernel polynomial. This entropy bound will be used in the proof of \cref{Th:S.R.P.PavingEntropyVersion}.

The connection between the probabilistic paving property and the paving conjecture is more apparent in ``determinantal point processes". These point processes are a very well studied class of strongly Rayleigh processes. For a background on detereminantal processes see \cite{Hough2009Zeros}.

A \textit{point process} $\X$ on a finite set $S$ is a random subset of $S$. Note that the law of $\X$ is a probability measure on the lattice of all the subsets of $S$. Alternatively, $\X$ can be identified with its indicator (random) vector, namely $(X_i)_{i \in S}$, where $X_i$ is the indicator function of the event $\{i \in \X\}$. When $|S|=n$ we can replace $S$ with $[n]$ without loss of generality.


A point process $\X$ on $[n]$ is \textit{determinantal} if there exists a Hermitian matrix $K \in \Mcal_n(\C)$, called \textit{kernel} of $\X$, such that for every $A \subseteq [n]$ we have
\begin{align*}
\prb(A \subseteq \X) = \det K_A,
\end{align*}
where $K_A$ is the principal submatrix of $K$ with rows and columns in $A$. It is well-known that a Hermitian matrix $K$ is the kernel of a determinantal process if and only if it is a positive semi-definite contraction (see, e.g., \cite[Theorem 4.5.5]{Hough2009Zeros}). It is proved in \cite{Borcea2009Negative} that determinantal processes have the strong Rayleigh property. We will explain the connection between the kernel polynomial of strongly Rayleigh processes and the kernel of determinantal processes in the next subsection.

In order to obtain the paving property for a discrete determinantal process, we will apply the following version of matrix paving to its kernel.

\begin{Prop}[Corollary 26 of \cite{Tao2013Real}]\label{Prop:MSSZeroDiagonalPaving}
Let $\Lambda$ be a positive number and $r$ be an integer such that $r \geq 2$. For every Hermitian matrix $A \in \Mcal_n(\C)$ with vanishing diagonal and $\|A\|_{op} \leq \Lambda$, there are diagonal projections $P_1,\dots,P_{r^2} \in \Mcal_n(\C)$ such that $\sum_{i=1}^{r^2} P_i = I_n$ and
\begin{align*}
\forall i \in \big[ r^2 \big] \ : \ \| P_i A P_i \|_{op} \leq \bigg( \dfrac{2\sqrt{2}}{\sqrt{r}} + \dfrac{1}{r} \bigg) \Lambda.
\end{align*}
\end{Prop}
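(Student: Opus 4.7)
The plan is to derive this proposition as an immediate corollary of Proposition \ref{Prop:ZeroDiagonalPolynomialPaving}, using the same translation between matrices and polynomials that the paper uses earlier to deduce Theorem \ref{Th:LRMatrixPaving} from Theorem \ref{Th:PolynomialPaving}. The bridge is the multivariate characteristic polynomial $\chi[A](\zv) = \det(Z - A)$, which is multi-affine and real stable by Proposition \ref{Prop:R.S.P.Example} since $A$ is Hermitian.

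First I would dispose of the small cases $r \in \{2,3\}$, where the right-hand side of the stated inequality already exceeds $\Lambda$; here any partition works since $\|P_iAP_i\|_{op} \leq \|A\|_{op} \leq \Lambda$. For $r \geq 4$, which is the regime where Proposition \ref{Prop:ZeroDiagonalPolynomialPaving} applies, I would verify its hypotheses on $g := \chi[A]$. Writing $g(\zv) = \sum_{B \subseteq [n]} a_B \, \zv^{B^c}$, the monomial $z_1 \cdots z_n$ has coefficient $1$, so $a_\emptyset = 1$, and the coefficient of $z_1 \cdots z_{i-1} z_{i+1} \cdots z_n$ in $\det(Z-A)$ is $-A_{i,i}$, which vanishes by the hypothesis on the diagonal of $A$; hence $a_{\{i\}} = 0$. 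Finally, $\ovl{g}$ is the ordinary characteristic polynomial of $A$, whose roots are the eigenvalues of $A$, and these lie in $[-\Lambda,\Lambda]$ since $\|A\|_{op} \leq \Lambda$.

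Applying Proposition \ref{Prop:ZeroDiagonalPolynomialPaving} then produces a partition $\{S_1,\dots,S_{r^2}\}$ of $[n]$ such that
\[
\forall i \in [r^2] \ : \ \M\big(\,\ovl{\partial^{S_i^c}\chi[A]}\,\big) \leq \bigg( \dfrac{r-2}{r(r-1)} + 2\sqrt{\dfrac{r-2}{r(r-1)}} \bigg) \Lambda.
\]
Taking $P_i$ to be the diagonal projection onto $S_i$ and using the identities $\chi[P_iAP_i] = \partial^{S_i^c}\chi[A]$ and $\|P_iAP_i\|_{op} = \M\big(\,\ovl{\chi[P_iAP_i]}\,\big)$ (both invoked in the paper's derivation of Theorem \ref{Th:LRMatrixPaving}) converts the displayed estimate into an upper bound on $\|P_iAP_i\|_{op}$.

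The last step is a trivial numerical estimate to match the stated constant: since $\frac{r-2}{r(r-1)} \leq \frac{1}{r}$ and $\frac{r-2}{r(r-1)} \leq \frac{2}{r}$ (equivalent respectively to $r-2 \leq r-1$ and $r-2 \leq 2(r-1)$), we conclude $\frac{r-2}{r(r-1)} + 2\sqrt{\frac{r-2}{r(r-1)}} \leq \frac{1}{r} + \frac{2\sqrt{2}}{\sqrt{r}}$, which is the bound in the proposition. I do not expect any real obstacle; the entire argument is bookkeeping modelled on the derivation of Theorem \ref{Th:LRMatrixPaving}, and in fact the route through Proposition \ref{Prop:ZeroDiagonalPolynomialPaving} yields a slightly sharper constant than the one appearing in the statement.
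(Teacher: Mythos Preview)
The paper does not supply its own proof of this proposition; it is quoted verbatim as Corollary~26 of Tao's exposition and used as a black box in the discussion of determinantal processes. Your derivation from Proposition~\ref{Prop:ZeroDiagonalPolynomialPaving} is correct and is exactly the specialization the paper alludes to when it remarks (at the end of \cref{Sec:ProofOfS.R.P.PavingEntropyVersion}'s opening) that Proposition~\ref{Prop:ZeroDiagonalPolynomialPaving} generalizes this matrix statement; the argument mirrors the introduction's deduction of Theorem~\ref{Th:LRMatrixPaving} from Theorem~\ref{Th:PolynomialPaving}. One minor remark: in that earlier deduction the paper writes $\|P_iAP_i\|_{op} = \m(\,\ovl{\chi[P_iAP_i]}\,)$, which is valid there only because $A$ is positive semi-definite; your switch to $\M$ is the correct adjustment for a Hermitian matrix with vanishing diagonal, whose eigenvalues can be negative. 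The handling of $r\in\{2,3\}$ and the numerical comparison of constants are both fine.
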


Let $K$ be the kernel of a determinantal process $\X$ and $D := \mathrm{Diag}(K)$. By applying the above proposition to $K-D$, we conclude that for every positive $\varepsilon$, there is a positive integer $r$ such that it is possible to partition $[n]$ into $r$ subsets $S_1,\dots,S_r$ such that
\begin{align*}
\big\| K_{S_i} - D_{S_i} \big\|_{op} \leq \varepsilon.
\end{align*}
Note that for every $A \subseteq [n]$, the matrix $K_A$ is the kernel of the restriction of $\X$ to $A$, namely $\X \cap A$, and that a determinantal process has independent points if and only if its kernel is diagonal. Hence, we can interpret the above inequality as the restrictions of the determinantal process to each $S_i$ having ``almost independent points".

In order to extend the above argument to strongly Rayleigh processes, we will apply \cref{Prop:ZeroDiagonalPolynomialPaving} to to the ``kernel polynomial" of a strongly Rayleigh process which will be introduced in the next subsection. As mentioned in \cref{Subsec:ProofOfZeroDiagonalPolynomialPaving}, \cref{Prop:ZeroDiagonalPolynomialPaving} is a generalization of \cref{Prop:MSSZeroDiagonalPaving}. Finally, in order to obtain \cref{Th:S.R.P.PavingEntropyVersion}, we will need a relationship between the entropy of a strongly Rayleigh process and the entropy of the roots of its kernel. This is done in \cref{Subsec:EntropyLowerBound}.

\subsection{Strongly Rayleigh Processes}\label{Subsec:S.R.Processes}

In this subsection we will introduce the notion of kernel polynomial of strongly Rayleigh processes. The kernel polynomial is, in a sense, a generalization of the kernel of determinantal process, hence the name. We will show that the kernel polynomial satisfies most of the known properties of the kernel of determinantal processes.

\begin{Def}
A point process $\X$ on $[n]$ is \textit{strongly Rayleigh} if its probability generating polynomial, defined as
\begin{align*}
f_\X(\zv) = \sum_{A \subseteq [n]} \prb (\X = A) \, \zv ^A,
\end{align*}
is real stable.
\end{Def}

Strongly Rayleigh processes have many nice properties, some of which are as follows:

\begin{enumerate}
\item
Strongly Rayleigh processes have the negative association property. This is proved in \cite{Borcea2009Negative}.
\item
The class of strongly Rayleigh processes is closed under many natural operations, including products, projections, external fields, conditioning and symmetric homogenization. These properties are proved in \cite{Borcea2009Negative}.
\item
Strongly Rayleigh processes have strong concentration properties, e.g., it is proved in \cite{Pemantle2014Concentration} that Lipschitz functionals of strongly Rayleigh processes satisfy an Azuma-type concentration inequality.
\end{enumerate}

For more information on strongly Rayleigh processes see \cite{Borcea2009Negative}.

\begin{Def}\label{Def:Kernel}
For a point process $\X$ on $[n]$, we define its \textit{kernel}, denoted $g_\X$, by
\begin{align*}
g_\X(z_1,\dots,z_n) = z_1 \dots z_n \ f_\X \Big(1-\dfrac{1}{z_1},\dots,1-\dfrac{1}{z_n}\Big),
\end{align*}
where $f_\X$ is the probability generating polynomial of $\X$.
\end{Def}

By computing the coefficients of $g_\X$, we get
\begin{align}\label{Eq:g_XCoefficients}
g_\X(\zv) = \sum_{A \subseteq [n]} (-1)^{|A|} \, \prb(A \subseteq \X) \, \zv^{A^c}.
\end{align}

It is shown in \cite[Proposition 3.5]{Borcea2009Negative} that determinantal processes are strongly Rayleigh and if $\Y$ is a determinantal process with kernel $K$, then $f_\Y(z) = \det(KZ+I-K)$, where $Z = \mathrm{Diag}(z_1,\dots,z_n)$. By a straightforward calculation we get $g_\Y(z) = \det(Z-K)$, namely the kernel polynomial of a determinantal process is the multivariate characteristic polynomial of its kernel.

\begin{Prop}\label{Prop:RealStabilityOfKernel}
The kernel of a strongly Rayleigh point process is real stable.
\end{Prop}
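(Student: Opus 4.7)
The plan is to verify real stability of $g_\X$ directly from the defining formula in \cref{Def:Kernel}. First, since $f_\X$ has (non-negative) real coefficients, the coefficient expression \eqref{Eq:g_XCoefficients} shows that $g_\X$ also has real coefficients. So it remains only to verify the stability condition, namely that $g_\X$ does not vanish on $\mathbb{H}^n$.

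Next, I would exploit the fact that the Möbius-type map $\varphi(z) = 1 - 1/z$ sends $\mathbb{H}$ into itself. Concretely, for any $z$ with $\im(z) > 0$, write $1/z = \bar z / |z|^2$, so $\im(1/z) = -\im(z)/|z|^2 < 0$, and hence $\im(1 - 1/z) = -\im(1/z) > 0$. This is the key observation.

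Given this, suppose for contradiction that $g_\X(\zv) = 0$ for some $\zv \in \mathbb{H}^n$. Since each $z_i \ne 0$, the factor $z_1 \cdots z_n$ is nonzero, forcing $f_\X(1 - 1/z_1, \dots, 1 - 1/z_n) = 0$. But by the paragraph above, the point $(1 - 1/z_1, \dots, 1 - 1/z_n)$ lies in $\mathbb{H}^n$, which contradicts the stability of $f_\X$ guaranteed by the strongly Rayleigh hypothesis.

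There is really no hard step here; this is essentially a one-line verification once one notices that $\varphi$ preserves $\mathbb{H}$. If a more structural proof is desired, one can alternatively first translate to get that $\tilde f(\zv) := f_\X(1 - z_1, \dots, 1 - z_n)$ is real stable (translation obviously preserves stability, using the real coefficients to handle the sign flip), note that $\tilde f$ is multi-affine since $f_\X$ is, and then apply part 4 of \cref{Prop:R.S.P.Properties} with $\gamma_i = 1$ and $d_i = 1$ to conclude that $z_1 \cdots z_n \, \tilde f(1/z_1, \dots, 1/z_n) = g_\X(\zv)$ is real stable.
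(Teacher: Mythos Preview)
Your proposal is correct. Your second, ``structural'' alternative is exactly the paper's proof: the paper defines $\mathcal{T}(p)(\zv)=p(1-z_1,\dots,1-z_n)$ and $\mathcal{R}(p)(\zv)=z_1\cdots z_n\,p(z_1^{-1},\dots,z_n^{-1})$, notes both are real-stability preserving (the latter via part~4 of \cref{Prop:R.S.P.Properties}), and observes $g_\X=\mathcal{R}(\mathcal{T}(f_\X))$. Your primary argument --- simply checking that $z\mapsto 1-1/z$ maps $\mathbb{H}$ into $\mathbb{H}$ and that the $z_1\cdots z_n$ prefactor is nonzero there --- is a slightly more elementary route that avoids invoking \cref{Prop:R.S.P.Properties} altogether; it trades the ``closure under operations'' viewpoint for a direct one-line verification, which is perfectly fine for a statement this light.
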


\begin{proof}
Define
\begin{align*}
\mathcal{T}(p)(z_1,\dots,z_n) &= p(1-z_1,\dots,1-z_n), \\
\mathcal{R}(p)(z_1,\dots,z_n) &= z_1 \dots z_n \, p(z_1^{-1},\dots,z_n^{-1}).
\end{align*}
Note that $\mathcal{T}$ is real stability preserving. Also, $\mathcal{R}$ is real stability preserving by part 4 of \cref{Prop:R.S.P.Properties}. The proposition follows since $g_\X = \mathcal{R}(\mathcal{T}(f_\X))$.
\end{proof}

\begin{Rem}\label{Rem:TransformationOfg_XTof_X}
Note that $f_\X = \mathcal{T}(\mathcal{R}(g_\X))$. Therefore, the distribution of a point process is uniquely determined by its kernel.
\end{Rem}

For a determinantal process $\Y$ with kernel $K$ and every $A \subseteq [n]$, the matrix $K_A$ is the kernel of the restriction of $\Y$ to $A$, namely $\Y \cap A$. The following theorem is a generalization of this fact.

\begin{Prop}\label{Prop:RestrictionOfS.R.P.}
Let $\X$ be strongly Rayleigh process on $[n]$ with kernel $g_\X$. For each $A \subseteq [n]$, the polynomial $\partial^{A}g_\X$ is the kernel of the restriction of $\X$ to $A^c$, namely the point process $\X \cap A^c$.
\end{Prop}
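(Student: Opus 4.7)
The plan is a direct coefficient comparison using the expansion \eqref{Eq:g_XCoefficients}. First I would observe that $g_\X$ is multi-affine, since every monomial in \eqref{Eq:g_XCoefficients} has squarefree support, so $\partial^A$ can be applied term by term. The differential operator kills exactly those monomials $\zv^{B^c}$ with $A \not\subseteq B^c$; the terms that survive are indexed by the $B \subseteq A^c$, and for each such $B$ one has $\partial^A \zv^{B^c} = \zv^{B^c \setminus A} = \zv^{A^c \setminus B}$. This yields
\begin{align*}
\partial^A g_\X(\zv) = \sum_{B \subseteq A^c} (-1)^{|B|} \, \prb(B \subseteq \X) \, \zv^{A^c \setminus B}.
\end{align*}

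Next I would invoke the tautology that for $B \subseteq A^c$ the events $\{B \subseteq \X\}$ and $\{B \subseteq \X \cap A^c\}$ are identical, so $\prb(B \subseteq \X) = \prb(B \subseteq \X \cap A^c)$. Substituting this equality into the display above and interpreting the result as a polynomial in the variables $\{z_i : i \in A^c\}$ (no $z_i$ with $i \in A$ actually occurs), I would recognize the sum as precisely the expansion \eqref{Eq:g_XCoefficients} applied to $\X \cap A^c$ regarded as a point process on the ground set $A^c$. This gives $\partial^A g_\X = g_{\X \cap A^c}$, as required.

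I do not anticipate any real obstacle: the content of the proposition is essentially bookkeeping once the inclusion-exclusion formula \eqref{Eq:g_XCoefficients} is in hand. A reassuring sanity check is the determinantal case, in which $g_\X = \chi[K]$; there the claim reduces to the well-known identity $\partial^A \det(Z - K) = \det(Z_{A^c} - K_{A^c})$, consistent with the fact that $K_{A^c}$ is the kernel of the determinantal process $\X \cap A^c$. A more conceptual alternative would be to note that the substitution $z_i = 1$ for $i \in A$ in $f_\X$ plainly produces $f_{\X \cap A^c}$, and then to translate this substitution into the $g$-picture via $g_\X = \mathcal{R}(\mathcal{T}(f_\X))$; but the direct coefficient comparison above is the shortest route, so that is what I would write up.
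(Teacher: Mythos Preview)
Your proposal is correct and is essentially the same argument as the paper's: both compute the coefficients of $\partial^A g_\X$ directly from the expansion \eqref{Eq:g_XCoefficients}, observe which terms survive, and then use the tautology $\prb(B \subseteq \X) = \prb(B \subseteq \X \cap A^c)$ for $B \subseteq A^c$ to match with the kernel expansion of $\X \cap A^c$. Your write-up is slightly more explicit about the monomial bookkeeping than the paper's, but there is no substantive difference.
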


\begin{proof}
Recall that $[\zv^\nu]_p$ denotes the coefficient of $\zv^\nu$ in polynomial $p$. We have
\begin{align*}
\big[\zv^{B^c}\big]_{\partial^Ag_\X} =
\begin{dcases*}
(-1)^{|B|} \, \prb(B \subseteq \X) & , if $B \subseteq A^c$ \\
0 & , otherwise
\end{dcases*}.
\end{align*}
Also, note that for every $B \subseteq A^c$ we have $\prb(B \subseteq \X \cap A^c) = \prb(B \subseteq \X)$. The result follows from these two facts.
\end{proof}

Theorem 4.5.5 of \cite{Hough2009Zeros} states that a Hermitian matrix $K$ is the kernel of a determinantal process if and only if it is a positive semi-definite contraction. The following proposition extends this result to strongly Rayleigh processes.

\begin{Th}\label{Th:ClassificationOfKernel}
Let $g \in \R[z_1,\dots,z_n]$ be a multi-affine real stable polynomial. Then $g$ is the kernel of a strongly Rayleigh process if and only if $[z_1 \dots z_n]_g = 1$ and all the roots of $\ovl{g}$ are in the interval $[0,1]$.
\end{Th}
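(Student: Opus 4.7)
My plan is as follows. For the necessity direction, I use \eqref{Eq:g_XCoefficients}: the coefficient of $z_1\cdots z_n$ in $g_\X$ comes from the $A = \emptyset$ term and equals $\prb(\emptyset \subseteq \X) = 1$. For the roots of $\ovl{g_\X}$, the identity
\begin{align*}
\ovl{g_\X}(x) = x^n \, \ovl{f_\X}(1 - 1/x)
\end{align*}
maps each real root $y$ of $\ovl{f_\X}$ to a root $x = 1/(1-y)$ of $\ovl{g_\X}$. Since $\ovl{f_\X}$ is real rooted (as the diagonalization of the real stable $f_\X$; part 3 of \cref{Prop:R.S.P.Properties}) and has non-negative coefficients (being the probability generating function of $|\X|$), all its roots are non-positive; each such $y \leq 0$ thus yields $x = 1/(1-y) \in (0,1]$, and the only other possible root of $\ovl{g_\X}$ is $x = 0$. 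Hence all roots of $\ovl{g_\X}$ lie in $[0,1]$.

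For sufficiency, let $g$ be multi-affine real stable with $[z_1\cdots z_n]_g = 1$ and all roots of $\ovl g$ in $[0,1]$. The natural candidate is the process whose probability generating polynomial is
\begin{align*}
f := \mathcal{T}(\mathcal{R}(g)),
\end{align*}
with $\mathcal R$ and $\mathcal T$ the operators from the proof of \cref{Prop:RealStabilityOfKernel}. Both operators preserve real stability (part 4 of \cref{Prop:R.S.P.Properties} handles $\mathcal R$ with $\bld{\gamma} = \1$, while $\mathcal T$ preserves it because real coefficients of $g$ make both half-planes zero-free), and both preserve multi-affinity. A direct evaluation gives $f(\1) = \mathcal R(g)(\0) = [z_1\cdots z_n]_g = 1$. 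Once $f$ is shown to have non-negative coefficients, $f$ is a bona fide probability generating polynomial of a strongly Rayleigh process $\X$, and since $\mathcal T$ and $\mathcal R$ are involutions on multi-affine polynomials, $g_\X = \mathcal R(\mathcal T(f)) = g$, exactly as in \cref{Rem:TransformationOfg_XTof_X}.

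The crux of the argument is thus non-negativity of every coefficient of $f$. Writing $g(\zv) = \sum_A a_A \zv^A$ and expanding the products $\prod_{i \in A^c}(1 - z_i)$ in $f(\zv) = \sum_A a_A \prod_{i \in A^c}(1 - z_i)$ yields
\begin{align*}
[\zv^C]_f \;=\; (-1)^{|C|} \sum_{A \subseteq C^c} a_A \;=\; (-1)^{|C|} \, g(\chi_{C^c}),
\end{align*}
where $\chi_T \in \{0,1\}^n$ denotes the indicator vector of $T \subseteq [n]$. Setting $T := C^c$, the task reduces to the sign identity
\begin{align*}
(-1)^{n-|T|} \, g(\chi_T) \;\geq\; 0 \qquad \text{for every } T \subseteq [n].
\end{align*}
I would prove this by descending from $\chi_{[n]} = \1$ down to $\chi_T$ along edges of the hypercube $\{0,1\}^n$, switching one coordinate from $1$ to $0$ at each step. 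The starting value $g(\1) = \ovl g(1) \geq 0$ is immediate from the root location and unit leading coefficient of $\ovl g$. Along each edge, $g$ restricts to a univariate linear polynomial with a unique real root, and the desired alternating sign propagates one step provided that this root lies in $[0,1]$, since then $g$ must switch sign between the two endpoints of the edge.

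The main obstacle is precisely this edge-wise root-location sub-claim. I would handle it by induction on $n$: for any subset $U \subseteq [n]$ of already-zeroed coordinates, the restriction $g|_{z_U = 0}$ is a multi-affine real stable polynomial in the remaining variables by \cref{Prop:R.S.P.Properties} part 2, with leading coefficient of sign dictated by \cref{Lem:PositiveRootedPolynomialCoefficientSigns}. Combined with the interlacing $\ovl{\partial_i g}\ll \ovl g$ of \cref{Prop:ExamplesOfInterlacing} to propagate the $[0,1]$-root-location property from $\ovl g$ to diagonalizations of successive restrictions, this smaller polynomial satisfies (after a global sign normalization) the hypotheses of the theorem. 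The inductive application then localizes the root of the final linear restriction to $[0,1]$, closing the induction and completing the proof.
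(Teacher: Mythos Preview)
Your proposal is correct and follows essentially the same route as the paper. Both directions use the same transformations $\mathcal T,\mathcal R$; both reduce non-negativity of the coefficients of $f$ to the sign identity $(-1)^{|B|}\,g(\chi_{B^c})\ge 0$, which the paper writes as $(-1)^{|B|}\,\ovl{g}_B(1)\ge 0$; and both establish this via the interlacing $\ovl{\partial_i g_{B_l}}\ll \ovl{g}_{B_l}$, $\ovl{\partial_i g_{B_l}}\ll \ovl{g}_{B_{l+1}}$ of \cref{Prop:ExamplesOfInterlacing} together with the sign control of \cref{Lem:PositiveRootedPolynomialCoefficientSigns}.

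The only substantive difference is packaging. The paper argues in one shot: it shows $\lambda_1(\ovl{g}_B)\le 1$ for every $B$ by running the interlacing chain $\lambda_1(\ovl{g}_{B_{l+1}})\le\lambda_1(\ovl{\partial_{i_{l+1}}g}_{B_l})\le\lambda_1(\ovl{g}_{B_l})$, and then reads off the sign of $\ovl{g}_B(1)$ directly from the leading coefficient. Your edge-wise hypercube descent plus induction on $n$ is a repackaging of exactly this chain; once you verify that $\pm g|_{z_U=0}$ again has $\ovl{(\cdot)}$-roots in $[0,1]$, you are invoking the same interlacing step the paper uses. Two places where the paper is slightly more careful than your sketch: (i) the edge-restriction need not be a genuinely linear polynomial (it can be constant), and (ii) the leading coefficient $a_{U^c}$ of $g|_{z_U=0}$ can vanish---both degeneracies are handled in the paper via \cref{Cor:SuppPositiveNonnegativeRoots}, which forces $U^c\in\supp(g)$ whenever $g|_{z_U=0}\neq 0$, and you should invoke it as well.
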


\begin{proof}
First we prove the ``only if" part. Let $\X$ be a strongly Rayleigh process with kernel $g_\X$. It follows from \eqref{Eq:g_XCoefficients} that $[z_1 \dots z_n]_{g_\X} = 1$. Assume that $\lambda_1,\dots,\lambda_n$ are the roots of $\ovl{g}_\X$. We have
\begin{align*}
\ovl{g}_\X(x) = (x-\lambda_1)\dots(x-\lambda_n).
\end{align*}
It follows from the definition of kernel that
\begin{align}\label{Eq:FactorizationOff_X}
\ovl{f}_\X(x) = (1-x)^n \, \ovl{g}_\X \bigg(\dfrac{1}{1-x}\bigg) = (\lambda_1x+1-\lambda_1)\dots(\lambda_nx+1-\lambda_n).
\end{align}
Since the coefficients of $\ovl{f}_\X$ are non-negative, we have $(1-\lambda_i)/\lambda_i \geq 0$ for each non-zero $\lambda_i$. This implies that $\lambda_i \in [0,1]$.

Now, consider the ``if" part. Let
\begin{align*}
g(\zv) = \sum_{A \subseteq [n]} (-1)^{|A|} a_A \, \zv^{A^c}
\end{align*}
By \cref{Lem:PositiveRootedPolynomialCoefficientSigns}, we have $a_A \geq 0$ for all $A \subseteq [n]$. Let $\mathcal{R}$ and $\mathcal{T}$ be as defined in the proof of \cref{Prop:RealStabilityOfKernel} and define $f = \mathcal{T}(\mathcal{R}(g))$. By computing the coefficients,
\begin{align*}
f(\zv) = \sum_{B \subseteq [n]} b_B \, \zv^B, \qquad b_B = \sum_{A \supseteq B} (-1)^{|A \backslash B|} a_A.
\end{align*}
Note that $g$ is the kernel of a strongly Rayleigh process if and only if $f$ is a real stable probability generating polynomial. Since $\mathcal{R}$ and $\mathcal{T}$ are real stability preserving, $f$ is real stable. It remains to prove that $b_B \in [0,1]$ and $\sum_{B \subseteq [n]} b_B = 1$.

For $I \subseteq [n]$ define $g_I = g|_{z_i = 0 \; \text{for} \; i \in I}$. Let $B \subseteq [n]$ and $B = \{i_1,\dots,i_k\}$. Also define $B_l = \{i_1,\dots,i_l\}$ for $l=1,\dots,k$ and $B_0 = \emptyset$. By \cref{Prop:ExamplesOfInterlacing},
\begin{align*}
\ovl{\partial_{i_{l+1}}g}_{B_l} \ll \ovl{g}_{B_l} \;\quad \text{and} \;\quad \ovl{\partial_{i_{l+1}}g}_{B_l} \ll \ovl{g}_{B_{l+1}}
\end{align*}
for $l=0,\dots,k-1$. Assume $\ovl{g}_B \neq 0$. Hence $g_B \neq 0$. Since $[n] \in \supp(g)$ and $g_B \neq 0$, it follows from \cref{Cor:SuppPositiveNonnegativeRoots} that $B^c \in \supp(g)$. Again using \cref{Cor:SuppPositiveNonnegativeRoots}, we have $B_l^c \in \supp(g)$ for $l=1,\dots,k-1$. Therefore, by \cref{Lem:PositiveRootedPolynomialCoefficientSigns}, the leading coefficients of $\ovl{\partial_{i_{l+1}}g}_{B_l}$ and $\ovl{g}_{B_l}$ have the same sign while the leading coefficients of $\ovl{g}_{B_{l+1}}$ and $\ovl{\partial_{i_{l+1}}g}_{B_l}$ have different signs. This implies that $\lambda_1 \big(\ovl{g}_{B_{l+1}}\big) \leq \lambda_1 \big(\ovl{\partial_{i_{l+1}}g}_{B_l}\big) \leq \lambda_1 \big(\ovl{g}_{B_l}\big)$ for $l=0,\dots,k-1$. Therefore,
\begin{align*}
1 \geq \lambda_1(\ovl{g}) \geq \lambda_1\big(\ovl{g}_{\{i_1\}}\big) \geq \lambda_1\big(\ovl{g}_{\{i_1,i_2\}}\big) \geq \dots \geq \lambda_1(\ovl{g}_B).
\end{align*}
Consequently, either $\ovl{g}_B = 0$ or its leading coefficient is $(-1)^{|B|} \, a_B$ and its roots are less than or equal to 1. Hence $(-1)^{|B|} \, \ovl{g}_B(1) \geq 0$. On the other hand,
\begin{align*}
\ovl{g_B}(1) = g_B(\1) = \sum_{A^c \subseteq B^c} (-1)^{|A|} \, a_A = \sum_{A \supseteq B} (-1)^{|A|} \, a_A.
\end{align*}
Therefore,
\begin{align*}
b_B = \sum_{A \supseteq B} (-1)^{|A \backslash B|} a_A = (-1)^{|B|} \, \ovl{g}_B(1) \geq 0.
\end{align*}

Also we have
\begin{align*}
\sum_{B \subseteq [n]} b_B = \sum_{B \subseteq [n]} \sum_{A \supseteq B} (-1)^{|A \backslash B|} a_A = \sum_{A \subseteq [n]} \Bigg(\sum_{B \subseteq A} (-1)^{|A \backslash B|}\Bigg) a_A = a_\emptyset = 1,
\end{align*}
where we used the fact that $\sum_{B \subseteq A} (-1)^{|A \backslash B|} = 1$ when $A = \emptyset$ and it is zero otherwise.
\end{proof}

By comparing the coefficients of the two sides of \eqref{Eq:FactorizationOff_X} we obtain the following result.

\begin{Prop}\label{Prop:S.R.PSizeDistribution}
Let $\X$ be a strongly Rayleigh process on a set of size $n$ and $\lambda_1,\dots,\lambda_n$ be the roots of $\ovl{g}_\X$. By \cref{Th:ClassificationOfKernel}, $\lambda_i \in [0,1]$. Let $I_1,\dots,I_n$ be independent Bernoulli variables with $I_i \sim \mathrm{Bernoulli}(\lambda_i)$. We have
\begin{align*}
|\X| \sim I_1 + \dots + I_n.
\end{align*}
\end{Prop}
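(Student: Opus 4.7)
The plan is to reduce the claim to an equality of probability generating functions, most of the work having already been done in deriving \eqref{Eq:FactorizationOff_X} during the proof of \cref{Th:ClassificationOfKernel}. First, I would observe that the diagonalization of the probability generating polynomial is, up to a change of variable, the univariate probability generating function of the count $|\X|$:
\begin{align*}
\ovl{f}_\X(x) = f_\X(x,\dots,x) = \sum_{A \subseteq [n]} \prb(\X = A) \, x^{|A|} = \sum_{k=0}^n \prb(|\X| = k) \, x^k = \E\!\left[ x^{|\X|} \right].
\end{align*}
This identification uses only that $f_\X$ is multi-affine and that the monomial $\zv^A$ has total degree $|A|$.

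Next, on the ``Bernoulli side", for each $i$ the random variable $I_i \sim \mathrm{Bernoulli}(\lambda_i)$ has probability generating function $\E[x^{I_i}] = \lambda_i x + (1 - \lambda_i)$, so by independence
\begin{align*}
\E\!\left[ x^{I_1 + \dots + I_n} \right] = \prod_{i=1}^n \big( \lambda_i x + 1 - \lambda_i \big).
\end{align*}
By \eqref{Eq:FactorizationOff_X} these two univariate polynomials are identical, so $|\X|$ and $I_1 + \dots + I_n$ share the same probability generating function on $\{0,1,\dots,n\}$. Since a probability distribution on a finite subset of $\N$ is uniquely determined by its generating function (the probabilities are just the coefficients), we conclude $|\X| \sim I_1 + \dots + I_n$. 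There is essentially no obstacle here beyond matching coefficients; the substantive content, namely the factorization of $\ovl{f}_\X$ in terms of the roots of $\ovl{g}_\X$ together with the fact that these roots lie in $[0,1]$ so they are valid Bernoulli parameters, was already established in \cref{Th:ClassificationOfKernel}.
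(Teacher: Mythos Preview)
Your proposal is correct and follows exactly the approach indicated in the paper: the paper's entire proof is the sentence ``By comparing the coefficients of the two sides of \eqref{Eq:FactorizationOff_X} we obtain the following result,'' and you have spelled out precisely this comparison via the probability generating functions of $|\X|$ and $I_1+\dots+I_n$.
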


It is proved in \cite[Lemma 4.1]{Pemantle2014Concentration} that the size of a strongly Rayleigh process has the same distribution as the sum of independent Bernoulli variables. The above proposition describes this distribution in a canonical way and generalizes a result about determinantal processes. The special case of the above proposition for determinantal processes is proved in \cite[Theorem 4.5.3]{Hough2009Zeros}. In fact, \cite[Theorem 4.5.3]{Hough2009Zeros} is much stronger and provides a canonical probabilistic interpretation for the eigenvalues of the kernel of determinantal processes. A natural question is whether this result extends to strongly Rayleigh processes. We believe that such an extension requires a deeper understanding of the structure of real stable polynomials. We will propose a conjecture in \cref{Subsec:EntropyLowerBound} which can be regarded as a first step in this direction.

\subsection{On The Correlation Structure of Negatively Dependent Measures}\label{Subsec:RepellingProperty}

Positive and negative dependence model attraction and repulsion, respectively. Unlike the positive dependent case, we expect that the correlation structure of a negatively dependent measure is constrained, in the sense that the repulsive force between the points cannot be strong everywhere. This distinction is already apparent in the definitions of positive and negative association, as the negative association property is more restrictive.

Recall that a point process on $[n]$ with law $\mu$, is \textit{positively associated} if
\begin{align}\label{Eq:PADefinition}
\int fg \, d\mu \geq \int f \, d\mu \int g \, d\mu
\end{align}
for all increasing functions $f$ and $g$ on the lattice of all the subsets of $[n]$, while it is \textit{negatively associated} if the reverse inequality holds, but for those increasing functions $f$ and $g$ which depend on disjoint subsets of the $n$ variables (where each subset is identified with its indicator vector). This distinction stems from the fact that a random variable is always positively correlated with itself and consequently, when $f$ and $g$ depend on a common variable, this gives rise to some positive ``auto-correlation" between $f$ and $g$ which works against the negative ``inter-correlations". In the extreme case, because of the Cauchy-Schwarz inequality, the reverse of \eqref{Eq:PADefinition} cannot hold for $f=g$. In this subsection, we will present three other manifestations of the aforementioned restriction in the correlation structure of strongly Rayleigh point processes.

Our first example is  \cite[Lemma 3.2]{Ghosh2017Multivariate}. This result states that for every strongly Rayleigh point process $\X$ with $\X=(X_1,\dots,X_n)$, we have
\begin{align*}
\mathrm{var}(X_i) + \! \sum_{j \in [n] \, , \, j \neq i} \! \mathrm{cov}(X_i,X_j) \geq 0 \, , \quad i=1,\dots,n.
\end{align*}
Since strongly Rayleigh processes have negative pairwise correlations, the above inequality implies that the pairwise correlations must typically be much smaller than the ``variances". There is no such restriction in the positively associated case. For example, in the the extreme case where $X_1=\dots=X_n$ we have $\mathrm{cov}(X_i,X_j) = \mathrm{var}(X_i)$ for all $i,j \in [n]$.

%
%

Our second example is the following theorem which appears in \cite[Corollary 5.6]{Anari2018LogConcave}.

\begin{Th}\label{Th:FromOveisGharan}
Let $\X$ be a strongly Rayleigh process and $\X = (X_1,\dots,X_n)$. We have
\begin{align*}
\dfrac{1}{2} \, \sum_{i=1}^n H(X_i) \leq H(\X),
\end{align*}
where $H(\cdot)$ is the entropy function.
\end{Th}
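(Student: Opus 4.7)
The plan is to derive the inequality from the log-concavity of the probability generating polynomial $f_\X$ on the positive orthant, which is a consequence of real stability. The natural framework is that of \emph{completely log-concave} (equivalently, \emph{strongly log-concave}) distributions developed by Anari--Liu--Oveis Gharan--Vinzant, since every real stable multi-affine polynomial with non-negative coefficients is log-concave on $\R^n_{>0}$, and so $\X$ belongs to this class.

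First, I would pass to the symmetric homogenization $\tilde{\X}$ of $\X$, a homogeneous strongly Rayleigh process on $[2n]$ whose probability generating polynomial $f_{\tilde{\X}}$ is homogeneous and real stable. Since $\tilde{\X}$ is supported on $k$-subsets of $[2n]$ for some fixed $k$, and its generating polynomial is homogeneous log-concave, it is a homogeneous strongly log-concave distribution in the Anari et al. sense, whose support forms a pure simplicial complex that is a $0$-local spectral expander. The symmetric homogenization preserves entropy in the sense that $H(\tilde{\X}) = H(\X)$, and it pairs the marginals so that $\sum_{i \in [2n]} H(\tilde{X}_i) = \sum_{i \in [n]} H(X_i) + \sum_{i \in [n]} H(1 - X_i) = 2 \sum_{i \in [n]} H(X_i)$. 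Thus, if we can establish $H(\tilde{\X}) \geq \tfrac{1}{4} \sum_i H(\tilde{X}_i)$ in the homogeneous case, the factor $\tfrac{1}{2}$ for $\X$ will follow.

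The heart of the proof is then the entropy contraction for the down-up random walk on the support of $\tilde{\X}$: log-concavity of $f_{\tilde{\X}}$ implies, by the trickle-down theorem, that all link walks on the simplicial complex contract variance and entropy at a geometric rate. Combined with the chain rule $H(\tilde{\X}) = \sum_i H(\tilde{X}_i \mid \tilde{X}_1, \dots, \tilde{X}_{i-1})$ and a comparison of the conditional entropies with the unconditioned marginals via the entropy decay estimate, this gives the required lower bound.

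The main obstacle is obtaining the \emph{sharp} constant $\tfrac{1}{2}$. Weaker bounds with worse constants (or constants depending on $n$) can be proved directly from negative association by iterating submodularity, but the exact factor $\tfrac{1}{2}$ appears to require the full strength of the log-concave polynomial theory, and in particular the local-to-global spectral gap analysis. Tightness of the constant can be checked on the simple exclusion example $\X$ uniform on $\{e_1,\dots,e_n\}$, where both sides of the inequality coincide up to lower-order terms, confirming that no elementary subadditivity argument can do better.
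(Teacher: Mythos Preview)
The paper does not prove this theorem; it simply quotes it as \cite[Corollary~5.6]{Anari2018LogConcave}. So there is no in-paper argument to compare against, and your proposal has to stand on its own.

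Your reduction step is broken because you are conflating two different constructions. The object you actually describe---pairing each $X_i$ with $\tilde X_{n+i}=1-X_i$ so that $H(\tilde\X)=H(\X)$ and $\sum_{i\in[2n]}H(\tilde X_i)=2\sum_{i\in[n]}H(X_i)$---is \emph{not} the symmetric homogenization of Borcea--Br\"and\'en--Liggett, and this ``complementary'' construction does not preserve the strong Rayleigh property. For instance, if $\X$ is uniform on $\{\{1\},\{2\}\}$ (strongly Rayleigh), your $\tilde\X$ is uniform on $\{\{1,4\},\{2,3\}\}$, whose generating polynomial $\tfrac12(z_1z_4+z_2z_3)$ vanishes at $z_1=z_4=e^{i\pi/4}$, $z_2=z_3=e^{i3\pi/4}$ and is therefore not stable (indeed $\{\{1,4\},\{2,3\}\}$ fails the basis-exchange axiom, so it is not even the set of bases of a matroid). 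The genuine symmetric homogenization---sample $\X$, then add a uniformly random $(n-|\X|)$-subset of $\{n+1,\dots,2n\}$---\emph{is} strongly Rayleigh, but it satisfies neither $H(\tilde\X)=H(\X)$ (extra randomness is injected) nor your marginal identity (the added coordinates are exchangeable with common marginal $1-\E|\X|/n$, not $1-p_i$). So the passage from the general case to the homogeneous case with the claimed bookkeeping does not go through.

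Even if the reduction were valid, the remaining step---the $\tfrac14$ bound in the homogeneous case via ``entropy contraction for the down-up walk'' and ``local-to-global''---is exactly the nontrivial content of the Anari--Liu--Oveis Gharan--Vinzant argument, and you have only named it, not carried any of it out. Finally, your tightness check is off: for $\X$ uniform on $\{e_1,\dots,e_n\}$ one has $H(\X)=\log_2 n$ while $\sum_i H(X_i)=n\,h(1/n)=\log_2 n+O(1)$, so the ratio tends to $1$, not $\tfrac12$, as $n\to\infty$. (The constant $\tfrac12$ is attained in this family only at $n=2$.)
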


Recall that for every point process $\X$ with $\X = (X_1,\dots,X_n)$, we have $H(\X) \leq \sum_{i=1}^n H(X_i)$ and equality occurs if and only if $X_1,\dots,X_n$ are independent. The above theorem implies that the correlation structure of a strongly Rayleigh process cannot be very strong, in the sense that its entropy cannot be much smaller than its independent version. On the other hand, the entropy of a positively dependent measure can be significantly smaller than its independent version. For example, if $X_1=\dots=X_n$, then $H(\X) = H(X_i)$, which can be significantly smaller than $\sum_{i}^n H(X_i) = n H(X_i)$.

The paving property for strongly Rayleigh processes (\cref{Th:S.R.P.PavingEntropyVersion}) is also a manifestation of this phenomenon. This theorem states that for a strongly Rayleigh process, the underlying space can be partitioned into a small number of sets such that the points of the restrictions of the process to each set are almost independent. On the other hand, in the positively dependent case all the points can be strongly correlated; for example consider the case $X_1=\dots=X_n$.

\subsection{An Entropy Lower Bound}\label{Subsec:EntropyLowerBound}

Let $\X$ be a strongly Rayleigh process on $[n]$. Recall the Bernoulli variables $I_1,\dots,I_n$ from \cref{Prop:S.R.PSizeDistribution} for which $|\X| \sim I_1+\dots+I_n$. In this subsection we prove that the entropy of $\X$ is greater than or equal to the entropy of $(I_1,\dots,I_n)$. An obvious lower bound for the entropy of $\X$ is $H(|\X|)$. Our result provides a stronger lower bound.

\begin{Th}\label{Th:EntropyLowerBound}
Let $\X$ be a strongly Rayleigh process on $[n]$ with kernel $g_\X$. We have
\begin{align*}
H(\X) \geq \sum_{i=1}^n h(\lambda_i),
\end{align*}
where $\lambda_1,\dots,\lambda_n$ are the roots of $\ovl{g}_\X$.
\end{Th}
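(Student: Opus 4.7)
The plan is to argue by induction on $n = |S|$. When $n = 1$, the process is a single Bernoulli variable $X_1 \sim \mathrm{Ber}(p)$ whose kernel polynomial is $g_\X(z_1) = z_1 - p$; its unique root is $\lambda_1 = p$, and $H(\X) = h(p) = h(\lambda_1)$.

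For the inductive step, set $p_n = \prb(n \in \X)$ and condition on whether $n \in \X$:
\[
H(\X) = h(p_n) + p_n\, H(\X \cap [n-1] \mid n \in \X) + (1 - p_n)\, H(\X \cap [n-1] \mid n \notin \X).
\]
Writing $g_\X(\zv) = z_n\, \partial_n g_\X(\zv) + g_\X|_{z_n = 0}(\zv)$ and tracing through \cref{Def:Kernel}, the two conditional processes are strongly Rayleigh on $[n-1]$ with kernels proportional to $-g_\X|_{z_n = 0}$ and $g_\X|_{z_n = 1}$ respectively. Let $\mu_1 \geq \cdots \geq \mu_{n-1}$ and $\nu_1 \geq \cdots \geq \nu_{n-1}$ be the roots of the diagonalizations of these two kernels. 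The inductive hypothesis then yields
\[
H(\X) \;\geq\; h(p_n) + p_n \sum_{i=1}^{n-1} h(\mu_i) + (1 - p_n) \sum_{i=1}^{n-1} h(\nu_i).
\]

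To compare this lower bound with $\sum_{j=1}^n h(\lambda_j)$, I translate the decomposition of $g_\X$ to the probability generating side via $\ovl{f}_\X(x) = (1-x)^n\, \ovl{g}_\X\!\left(\tfrac{1}{1-x}\right)$, obtaining the polynomial identity
\[
\prod_{j=1}^n (\lambda_j x + 1 - \lambda_j) \;=\; p_n\, x \prod_{i=1}^{n-1}(\mu_i x + 1 - \mu_i) + (1-p_n) \prod_{i=1}^{n-1}(\nu_i x + 1 - \nu_i).
\]
In addition, the real stability of $g_\X$ forces $\partial_n g_\X \ll g_\X|_{z_n = 0}$ and $\partial_n g_\X \ll g_\X|_{z_n = 1}$ (since $g_\X|_{z_n = i} = g_\X|_{z_n = 0} + i\, \partial_n g_\X$ and $g_\X|_{z_n = 1+i} = g_\X|_{z_n = 1} + i\, \partial_n g_\X$ are stable by \cref{Prop:EquivalentOfStablitiy}, and $p + iq$ being stable is equivalent to $q \ll p$ as recalled in \cref{Subsec:StablePolynomials}), which, together with \cref{Prop:ExamplesOfInterlacing}, yields the interlacing $\mu_i \leq \kappa_i \leq \nu_i$, where $\kappa_i$ are the roots of $\ovl{\partial_n g_\X}$.

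The main obstacle, and the remaining step of the induction, is to prove the purely algebraic inequality
\[
h(p_n) + p_n \sum_i h(\mu_i) + (1-p_n) \sum_i h(\nu_i) \;\geq\; \sum_j h(\lambda_j)
\]
from the displayed polynomial identity together with the interlacing $\mu_i \leq \nu_i$. The polynomial identity alone is insufficient --- one can construct Bernoulli mixtures that match the right-hand size distribution and violate the inequality --- so the real-stability-forced interlacing will be essential. My plan is to read both sides as entropies of distributions on $\{0,1\}^n$ with the same coordinate-sum distribution: the left-hand side as the joint entropy of a $\mathrm{Ber}(p_n)$ variable coupled with a product of Bernoullis whose parameters switch between $\mu_i$ and $\nu_i$ according to the first coordinate, and the right-hand side as the entropy of an independent product of $\mathrm{Ber}(\lambda_j)$'s. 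I then expect to deduce the inequality from a Schur-concavity/majorization argument for the concave function $h$ on $[0,1]$, exploiting the interlacing $\mu_i \leq \nu_i$ as a manifestation of the majorization properties of hyperbolic polynomials applied to $\ovl{g}_\X$.
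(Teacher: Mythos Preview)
Your skeleton is exactly the paper's: induction on $n$, condition on $\{n\in\X\}$, apply the inductive hypothesis to the two conditional processes, and then close an algebraic inequality by a majorization/Schur-concavity argument. Your identification of the conditional kernels with $g_\X|_{z_n=0}$ and $g_\X|_{z_n=1}$ is correct. But the final step, which you flag as ``the main obstacle,'' is where the real content lies, and your plan for it does not yet hit the right interlacing.

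The interlacing you extract, $\mu_i \le \kappa_i \le \nu_i$ with $\kappa$ the roots of $\ovl{\partial_n g_\X}$, compares the two conditional root vectors to each other (via a third sequence of the same length $n-1$). That relation by itself does not let you compare either side to the length-$n$ vector $\lambda = \lambda(\ovl{g}_\X)$, and the inequality you need is precisely a comparison with $\sum_j h(\lambda_j)$. What the paper proves (its \cref{Lem:KernelInterlacing}) is the different statement that \emph{each} conditional root vector interlaces $\lambda$: $\ovl{g}_1 \ll \ovl{g}_\X$ and $\ovl{g}_0 \ll \ovl{g}_\X$. This does not drop out of $\partial_n g_\X \ll g_\X|_{z_n=0,1}$; the paper obtains it by passing to the p.g.f.\ side, using $f_1 \ll f$ and $\Rcal_{n-1}(f_0) \ll \Rcal_n(f)$, and translating back through the M\"obius map $x\mapsto 1/(1-x)$, carefully tracking degrees to ensure the interlacing survives.

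Once that interlacing is in hand, the closing argument is not a vague Schur step but a specific two-move combination. First, Marshall--Olkin's \cref{Lem:FromMarshall2}: if $(\mu_1,\dots,\mu_{n-1})$ interlaces $(\lambda_1,\dots,\lambda_n)$ then $\lambda \succ (\mu,\alpha)$ with $\alpha := \sum\lambda_j - \sum\mu_i$, and similarly $\lambda \succ (\nu,\beta)$. Schur-concavity of $\sum h$ then gives $\sum h(\mu_i)+h(\alpha)\ge \sum h(\lambda_j)$ and $\sum h(\nu_i)+h(\beta)\ge \sum h(\lambda_j)$. Second, one computes $p_n\alpha+(1-p_n)\beta=p_n$ (from $\sum\mu_i=\E[|\X|\mid n\in\X]-1$ and $\sum\nu_i=\E[|\X|\mid n\notin\X]$), checks $\alpha,\beta\in[0,1]$ using negative association, and applies concavity of $h$ to get $h(p_n)\ge p_n h(\alpha)+(1-p_n)h(\beta)$. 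Taking the convex combination of the two Schur inequalities and adding this last line yields exactly your target inequality. Your proposal to ``read both sides as entropies of product distributions'' and invoke hyperbolic majorization is suggestive but not a proof; the missing idea is the interlacing of $\mu$ and $\nu$ with $\lambda$, together with the append-the-defect trick of \cref{Lem:FromMarshall2}.
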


We will use the following three lemmas.

\begin{lemma}[5.B.4 of \cite{Marshall2011Inequalities}]\label{Lem:FromMarshall2}
If $b_1 \geq \dots \geq b_{n-1}$ interlaces $a_1 \geq \dots \geq a_n$, then
\begin{align*}
(a_1,\dots,a_n) \succ (b_1,\dots,b_{n-1},b^*),
\end{align*}
namely $(a_1,\dots,a_n)$ majorizes $(b_1,\dots,b_{n-1},b^*)$, where $b^* = \sum_{i=1}^n a_n - \sum_{i=1}^{n-1} b_i$.
\end{lemma}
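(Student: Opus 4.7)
The plan is to verify the two conditions that define majorization for the sorted sequence obtained from $(b_1,\ldots,b_{n-1},b^*)$: equality of the total sum and pointwise domination of partial sums. The first condition is immediate from the definition $b^* = \sum_{i=1}^n a_i - \sum_{i=1}^{n-1} b_i$, which I read as the correction of an obvious typo in the statement ($a_n$ replaced by $a_i$ in the indexed sum). So the real work is the partial-sum inequalities
\[
\sum_{i=1}^k a_i \;\geq\; \sum_{i=1}^k c_i, \qquad k=1,\ldots,n-1,
\]
where $(c_1,\ldots,c_n)$ denotes the nonincreasing rearrangement of $(b_1,\ldots,b_{n-1},b^*)$.

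As a preliminary step I would locate $b^*$ within the interval $[a_n,a_1]$. Rewriting $b^*=a_n+\sum_{i=1}^{n-1}(a_i-b_i)$ and $b^*=a_1-\sum_{i=1}^{n-1}(b_i-a_{i+1})$, and using the interlacing inequalities $a_i\geq b_i\geq a_{i+1}$, gives $a_n\leq b^*\leq a_1$ immediately. This observation is what allows $b^*$ to play the role of an ``extra'' root without knowing exactly which slot of the sorted sequence it occupies.

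The core argument is then a case split for each $k$, depending on whether $b^*$ is among the top $k$ entries or not.

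\emph{Case A: $b^*\geq b_k$.} Then the top $k$ entries of $(b_1,\ldots,b_{n-1},b^*)$ are $b_1,\ldots,b_{k-1},b^*$, and substituting the definition of $b^*$ reduces the required inequality to
\[
\sum_{i=k}^{n-1}(b_i-a_{i+1})\;\geq\;0,
\]
which holds term by term by the interlacing $b_i\geq a_{i+1}$.

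\emph{Case B: $b^*<b_k$.} Then the top $k$ entries are $b_1,\ldots,b_k$, and the required inequality $\sum_{i=1}^k a_i\geq\sum_{i=1}^k b_i$ is again immediate from interlacing, namely $a_i\geq b_i$ for $i=1,\ldots,k$.

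Neither case uses anything beyond the two interlacing chains, so I do not expect a genuine obstacle; the only subtlety is the bookkeeping around where $b^*$ sits among the $b_i$'s, which is exactly what the case split sidesteps. After confirming the total-sum equality and these two inequalities across all $k$, the majorization $(a_1,\ldots,a_n)\succ(b_1,\ldots,b_{n-1},b^*)$ follows by definition, completing the proof.
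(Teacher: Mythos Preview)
Your argument is correct. The paper does not supply its own proof of this lemma; it is quoted directly from Marshall--Olkin--Arnold and used as a black box, so there is nothing to compare against. Your verification---equality of total sums from the definition of $b^*$, the bounds $a_n\le b^*\le a_1$ from the two telescoping rewrites, and the case split on whether $b^*\ge b_k$ to identify the top $k$ entries---is clean and uses only the interlacing inequalities $a_i\ge b_i\ge a_{i+1}$.
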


For a background on majorization see \cite{Marshall2011Inequalities}.

\begin{lemma}[Lemma 3.2 of \cite{Borcea2010Multivariate}]\label{Lem:BivariateRealStability}
Let $p(z_1,z_2) = a_{11}z_1z_2 + a_{10}z_1 + a_{01}z_2 + a_{00} \in \R[z_1,z_2] \backslash \{0\}$. Then, $p \in \Hcal_2(\R)$ if and only if $\det [a_{ij}] \leq 0$.
\end{lemma}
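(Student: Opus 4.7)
The plan is to exploit that $p$ is affine in $z_2$, so for each fixed $z_1$ the zero set of $p$ is given by a single Möbius-type expression in $z_1$ with real coefficients, whose imaginary part is governed by the $2\times 2$ determinant in question. Real stability of $p$ is equivalent to: for every $z_1 \in \mathbb{H}$, every $z_2 \in \C$ solving $p(z_1,z_2)=0$ satisfies $z_2 \notin \mathbb{H}$, i.e., $\mathrm{Im}(z_2) \leq 0$.

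First I would dispose of the degenerate case $(a_{11},a_{01})=(0,0)$: then $p = a_{10}z_1 + a_{00}$ depends only on $z_1$, its only possible root is real, so $p$ is automatically stable; at the same time $\det[a_{ij}] = -a_{01}a_{10} = 0 \leq 0$, and both directions hold trivially. From now on assume $(a_{11},a_{01}) \neq (0,0)$. Because the coefficients are real, the equation $a_{11}z_1 + a_{01} = 0$ has only a real solution (or no solution), so the denominator $a_{11}z_1+a_{01}$ is nonzero for every $z_1 \in \mathbb{H}$. Hence $p(z_1,z_2)=0$ can be solved uniquely:
\[
z_2 \;=\; -\,\frac{a_{10}z_1 + a_{00}}{a_{11}z_1 + a_{01}}.
\]

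The heart of the argument is the standard identity for real-coefficient Möbius maps. For real $a,b,c,d$ and $z \in \C$ with $cz+d \neq 0$,
\[
\mathrm{Im}\!\left(\frac{az+b}{cz+d}\right) \;=\; \frac{(ad-bc)\,\mathrm{Im}(z)}{|cz+d|^{2}},
\]
proved by multiplying numerator and denominator by $\overline{cz+d}$ and reading off the imaginary part. Applying this with $(a,b,c,d)=(a_{10},a_{00},a_{11},a_{01})$ gives
\[
\mathrm{Im}(z_2) \;=\; -\,\frac{(a_{10}a_{01}-a_{00}a_{11})\,\mathrm{Im}(z_1)}{\,|a_{11}z_1+a_{01}|^{2}\,}.
\]
Since $\mathrm{Im}(z_1) > 0$ and the denominator is positive, $\mathrm{Im}(z_2) \leq 0$ for every such $z_1$ if and only if $a_{10}a_{01}-a_{00}a_{11}\geq 0$, i.e., $\det[a_{ij}] = a_{00}a_{11}-a_{01}a_{10}\leq 0$.

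Finally I would package the two directions cleanly. For the ``only if'' direction, the contrapositive: if $\det[a_{ij}] > 0$, then $a_{10}a_{01} - a_{00}a_{11} < 0$, so (note that $\det > 0$ rules out the degenerate case $a_{11}=a_{01}=0$) the formula yields $\mathrm{Im}(z_2) > 0$ for any $z_1 \in \mathbb{H}$, producing a root of $p$ in $\mathbb{H}^2$ and contradicting stability. For the ``if'' direction, if $\det[a_{ij}] \leq 0$ then the formula forces $\mathrm{Im}(z_2)\leq 0$ for every $z_1\in\mathbb{H}$, so no zero of $p$ lies in $\mathbb{H}^2$. There is no serious obstacle here; the only subtlety is verifying that the denominator $a_{11}z_1+a_{01}$ cannot vanish on $\mathbb{H}$, which is immediate from the reality of the coefficients, and handling the fully degenerate case $a_{11}=a_{01}=0$ separately as above.
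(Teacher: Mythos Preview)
Your argument is correct. The paper does not actually prove this lemma; it quotes it verbatim as Lemma~3.2 of \cite{Borcea2010Multivariate} and uses it as a black box in the base case of \cref{Th:EntropyLowerBound}. So there is no ``paper's proof'' to compare against, and your self-contained M\"obius-map computation is a perfectly good substitute. The only minor cosmetic point is that in the degenerate case $(a_{11},a_{01})=(0,0)$ you might also mention explicitly that if $a_{10}=0$ then $p=a_{00}$ is a nonzero constant and hence trivially stable; your phrase ``its only possible root'' already covers this, but it is worth spelling out since the hypothesis only guarantees $p\neq 0$.
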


Recall that the class of strongly Rayleigh processes is closed under conditioning and projection.

\begin{lemma}\label{Lem:KernelInterlacing}
Let $\X$ be a strongly Rayleigh process on $[n]$ with kernel polynomial $g$. Denote the kernel polynomials of $\big(\X \cap [n-1] \big| n \in \X\big)$ and $\big(\X \cap [n-1] \big| n \not\in \X\big)$ by $g_1$ and $g_0$, respectively. Then, $\ovl{g}_1 \ll \ovl{g}$ and $\ovl{g}_0 \ll \ovl{g}$.
\end{lemma}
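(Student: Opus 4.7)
The plan is to express $g_0$ and $g_1$ explicitly and then compare them to $\ovl{g}$ by a direct sign analysis at the roots of $\ovl{g}$. Write $g(z_1,\ldots,z_n) = r(z_1,\ldots,z_{n-1}) + z_n\,s(z_1,\ldots,z_{n-1})$, so that $s = \partial_n g$ and diagonalization gives $\ovl{g}(x) = \ovl{r}(x) + x\,\ovl{s}(x)$. By matching coefficients in \eqref{Eq:g_XCoefficients} with the conditional probability identities $\prb(B \subseteq \X \mid n \in \X) = \prb(B \cup \{n\} \subseteq \X)/p_n$ and $\prb(B \subseteq \X \mid n \notin \X) = \bigl[\prb(B \subseteq \X) - \prb(B \cup \{n\} \subseteq \X)\bigr]/(1-p_n)$, where $p_n := \prb(n \in \X)$, a direct computation yields
\[
g_1 \;=\; -\frac{r}{p_n}, \qquad g_0 \;=\; \frac{r+s}{1-p_n},
\]
and hence, after diagonalization, $\ovl{g}_1$ and $\ovl{g}_0$ are monic polynomials of degree $n-1$ that are positive scalar multiples of $-\ovl{r}$ and $\ovl{r}+\ovl{s}$, respectively.

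Next, I invoke the basic fact $s = \partial_n g \ll g$, which after diagonalization gives $\ovl{s} \ll \ovl{g}$. Let $\lambda_1 \geq \cdots \geq \lambda_n$ be the roots of $\ovl{g}$; by \cref{Th:ClassificationOfKernel} they all lie in $[0,1]$. The vanishing identity $\ovl{g}(\lambda_i)=0$ rewrites as $\ovl{r}(\lambda_i) = -\lambda_i\,\ovl{s}(\lambda_i)$, which gives
\[
\ovl{g}_1(\lambda_i) \;\propto\; \lambda_i\,\ovl{s}(\lambda_i), \qquad \ovl{g}_0(\lambda_i) \;\propto\; (1-\lambda_i)\,\ovl{s}(\lambda_i).
\]
Since $\lambda_i,\, 1-\lambda_i \geq 0$ and the signs of $\ovl{s}(\lambda_1),\ldots,\ovl{s}(\lambda_n)$ alternate (because $\ovl{s}$ is monic of degree $n-1$ with its roots interlacing those of $\ovl{g}$), the values of $\ovl{g}_0$ and $\ovl{g}_1$ at $\lambda_1,\ldots,\lambda_n$ also alternate in sign. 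By the intermediate value theorem, each of these monic degree-$(n-1)$ polynomials contributes a root to every open interval $(\lambda_{i+1},\lambda_i)$ for $i=1,\ldots,n-1$. A degree count shows that this exhausts all their roots, producing precisely the interlacing pattern that characterizes $\ll$ in the presence of matching positive leading coefficients, i.e., $\ovl{g}_1 \ll \ovl{g}$ and $\ovl{g}_0 \ll \ovl{g}$.

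The main obstacle is the treatment of degenerate configurations: coincidences among the $\lambda_i$, or between a $\lambda_i$ and a root of $\ovl{s}$, or with the endpoints $0$ and $1$ (the latter forcing $\ovl{g}_1$ or $\ovl{g}_0$ to vanish at some $\lambda_i$). Such coincidences correspond to common roots of the polynomials under comparison and can be accommodated either by bookkeeping on multiplicities or, more cleanly, by a small perturbation of the process within the strongly Rayleigh class followed by a limiting argument. The extreme cases $p_n \in \{0,1\}$, where one of $g_0,g_1$ is undefined by conditioning, are trivial: for instance, if $p_n = 0$ then $r = 0$ and $g_0 = s$, so $\ovl{g}_0 = \ovl{s} \ll \ovl{g}$ is already the known interlacing.
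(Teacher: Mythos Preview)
Your argument is correct and takes a genuinely different route from the paper's. The paper works on the generating-polynomial side: writing $f = z_n(p_n f_1) + (1-p_n)f_0$, it uses \cref{Prop:ExamplesOfInterlacing} to get $\ovl{f}_1 \ll \ovl{f}$ and then transports this interlacing back to the kernel side via the monotone M\"obius map $x \mapsto 1/(1-x)$; the $g_0$ half needs a separate reversal trick with $\Rcal_n$. You instead stay entirely on the kernel side: the explicit identities $g_1 = -r/p_n$ and $g_0 = (r+s)/(1-p_n)$, combined with $\ovl{s} \ll \ovl{g}$ and $\lambda_i \in [0,1]$, yield $\ovl{g}_1(\lambda_i) = (\lambda_i/p_n)\,\ovl{s}(\lambda_i)$ and $\ovl{g}_0(\lambda_i) = \bigl((1-\lambda_i)/(1-p_n)\bigr)\,\ovl{s}(\lambda_i)$, so both inherit the alternating sign pattern of $\ovl{s}$ at the $\lambda_i$, and the interlacing follows by the intermediate value theorem plus a degree count. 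Your approach is more elementary and treats $g_0$ and $g_1$ symmetrically, while the paper's argument is more structural in that it makes transparent that the lemma is just \cref{Prop:ExamplesOfInterlacing} transported through the kernel--to--generating-polynomial correspondence. Both proofs reduce to the generic case $\lambda_i \in (0,1)$ with simple roots and dispose of degeneracies by a perturbation within the strongly Rayleigh class, exactly as you outline; your handling of the boundary cases $p_n \in \{0,1\}$ is also fine.
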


\begin{proof}
Let $\lambda = \lambda(\ovl{g})$, $\gamma = \lambda(\ovl{g}_1)$ and $\delta = \lambda(\ovl{g}_0)$. By \cref{Th:ClassificationOfKernel}, $\lambda_i \in [0,1]$ for all $i \in [n]$. First, we consider the case where $\lambda_i \in (0,1)$ for all $i \in [n]$. Let $f$ be the probability generating polynomial of $\X$. Denote the probability generating polynomials of $\big(\X \cap [n-1] \big| n \in \X\big)$ and $\big(\X \cap [n-1] \big| n \not\in \X\big)$ by $f_1$ and $f_0$, respectively. Note that $f = z_n (p_n f_1) + (1-p_n) f_0$ and $f_1,f_0 \in \R[z_1,\dots,z_{n-1}]$.

By \cref{Prop:ExamplesOfInterlacing} we have $f_1 \ll f$ which implies $\ovl{f}_1 \ll \ovl{f}$. On the other hand,
\begin{align*}
\ovl{f}(x) &= (1-x)^n \, \ovl{g} \bigg(\dfrac{1}{1-x}\bigg) = (\lambda_1 x + 1-\lambda_1) \dots (\lambda_n x + 1-\lambda_n), \\
\ovl{f}_1(x) &= (1-x)^n \, \ovl{g}_1 \bigg(\dfrac{1}{1-x}\bigg) = (\gamma_1 x + 1-\gamma_1 \big) \dots (\gamma_{n-1} x + 1-\gamma_{n-1}).
\end{align*}

Since $\lambda_i > 0$ for all $i \in [n]$, the polynomial $\ovl{f}$ is of degree $n$ and all its roots are negative. Since $\ovl{f}_1 \ll \ovl{f}$ and $\deg(\ovl{f}_1) < \deg(\ovl{f})$, the polynomial $\ovl{f}_1$ is of degree $n-1$ and its roots interlace the roots of $\ovl{f}$. In particular, $\gamma_i > 0$ for all $i \in [n-1]$. Now, since the function $(x-1)/x$ is increasing on $\R_{>0}$, it follows that $\gamma$ interlaces $\lambda$ which implies $\ovl{g}_1 \ll \ovl{g}$.

Now we prove $\ovl{g}_0 \ll \ovl{g}$. For every multi-affine polynomial $p \in \R[z_1,\dots,z_n]$ define
\begin{align*}
\Rcal_n(p) = z_1 \dots z_n \, p \bigg(\dfrac{1}{z_1},\dots,\dfrac{1}{z_n}\bigg).
\end{align*}
By part 4 of \cref{Prop:R.S.P.Properties}, if $p \in \Hcal_n(\R)$ then $\Rcal_n(p) \in \Hcal_n(\R)$. Since $\Rcal$ is linear, we have $\Rcal_n(f) =  p_n \Rcal_{n-1}(f_1) + z_n \big( (1-p_n) \Rcal_{n-1}(f_0) \big)$. Therefore, $\Rcal_{n-1}(f_0) \ll \Rcal_n(f)$ which implies $\ovl{\Rcal_{n-1}(f_0)} \ll \ovl{\Rcal_n(f)}$. On the other hand,
\begin{align*}
\ovl{\Rcal_n(f)}(x) &= x^n \, \ovl{f} \bigg(\dfrac{1}{x}\bigg) = \big( (1-\lambda_1)x + \lambda_1 \big) \dots \big( (1-\lambda_n)x + \lambda_n \big), \\
\ovl{\Rcal_{n-1}(f_0)}(x) &= x^{n-1} \, \ovl{f}_0 \bigg(\dfrac{1}{x}\bigg) = \big( (1-\delta_1)x + \delta_1 \big) \dots \big( (1-\delta_{n-1})x + \delta_{n-1} \big).
\end{align*}
Using the assumption $\lambda_i < 1$, for all $i \in [n]$, and an argument similar to the one used above, we can deduce $\ovl{g}_2 \ll \ovl{g}$.

For the general case, we can approximate every kernel polynomial by kernel polynomials whose diagonalizations have roots in $(0,1)$. For example, this can be achieved using the polynomials $(1+2\varepsilon)^{-n} \, g\big((1+2\varepsilon) \zv - \varepsilon \1\big)$, where $\varepsilon > 0$.
\end{proof}

\begin{proof}[Proof of \cref{Th:EntropyLowerBound}]
We use induction on $n$. The base is $n=2$. Denote the probability generating polynomial of $\X$ by $f_\X$ and let $f_\X(z_1,z_2) = a_0+a_1z_1+a_2z_2+a_3z_1z_2$. Recall that $\ovl{f}_\X(x) = (\lambda_1 x + 1-\lambda_1)(\lambda_2 x + 1-\lambda_2)$. Therefore,
\begin{align*}
a_0+(a_1+a_2)x+a_3x^2 = (1-\lambda_1)(1-\lambda_2) + \big(\lambda_1(1-\lambda_2)+\lambda_2(1-\lambda_1)\big)x + \lambda_1\lambda_2x^2.
\end{align*}
By comparing the coefficients,
\begin{align*}
a_0 &= (1-\lambda_1)(1-\lambda_2), \\
a_1+a_2 &= \lambda_1(1-\lambda_2)+\lambda_2(1-\lambda_1), \\
a_3 &= \lambda_1\lambda_2.
\end{align*}
By \cref{Lem:BivariateRealStability} we have $a_1a_2 \geq a_0a_3 = \big((1-\lambda_1)\lambda_1\big) \big((1-\lambda_2)\lambda_2\big)$. This implies
\begin{align*}
\big((1-\lambda_1)(1-\lambda_2) , \lambda_1(1-\lambda_2) , \lambda_2(1-\lambda_1) , \lambda_1\lambda_2\big) \succ (a_0 , a_1 , a_2 , a_3).
\end{align*}
Since entropy is a Schur-concave function, namely it is non-increasing with respect to majorization (See \cite{Marshall2011Inequalities}),
\begin{align*}
H(\X) = H(a_0,a_1,a_2,a_3) \geq H\big((1-\lambda_1)(1-\lambda_2) , \lambda_1(1-\lambda_2) , \lambda_2(1-\lambda_1) , \lambda_1\lambda_2\big) = H(I_1,I_2).
\end{align*}

Now assume that the statement is true for $n-1$. Let $\X' = \X \cap [n-1]$ and denote the non-increasing vectors of the roots of the kernel polynomials of $(\X'| n \in \X)$ and $(\X'| n \not \in \X)$ by $\gamma$ and $\delta$, respectively. By \cref{Lem:KernelInterlacing}, $\gamma$ and $\delta$ both interlace $\lambda$, where $\lambda$ is the non-increasing vector of the roots of $\ovl{g}_\X$.

%

By the induction hypothesis,
\begin{align*}
H \big( \X' | n \in \X \big) \geq \sum_{i=1}^{n-1} h(\gamma_i) \qquad \text{and} \qquad H \big( \X' | n \not \in \X \big) &\geq \sum_{i=1}^{n-1} h(\delta_i).
\end{align*}
Therefore,
\begin{align}\label{Eq:EntropyLowerBound1}
H(\X) &= p_n H \big( \X' | n \in \X \big) + (1-p_n) H \big( \X' | n \not \in \X \big) + h(p_n) \nonumber \\
&\geq p_n \bigg( \sum_{i=1}^{n-1} h(\gamma_i) \bigg) + (1-p_n) \bigg( \sum_{i=1}^{n-1} h(\delta_i) \bigg) + h(p_n).
\end{align}

Let $\alpha = \sum_{i=1}^n \lambda_i - \sum_{i=1}^{n-1} \gamma_i$ and $\beta = \sum_{i=1}^n \lambda_i - \sum_{i=1}^{n-1} \delta_i$. Since $\gamma$ and $\delta$ both interlace $\lambda$, we have $\alpha \geq 0$ and $\beta \geq 0$. Note that
\begin{align*}
\sum_{i=1}^{n-1} \gamma_i = \E\big[|\X'| \big| n \in \X\big] = \E\big[|\X| \big| n \in \X\big] -1 \qquad \text{and} \qquad \sum_{i=1}^{n-1} \delta_i = \E\big[|\X'| \big| n \not \in \X\big] = \E\big[|\X| \big| n \not \in \X\big].
\end{align*}
Therefore,
\begin{align}\label{Eq:EntropyLowerBound2}
p_n \alpha + (1-p_n) \beta = p_n.
\end{align}
Since $\beta \geq 0$, it follows from the above equation that $\alpha \leq 1$. Also, by negative dependence, $\E\big[|\X'| \big| n \not\in \X\big] \geq \E\big[|\X'|\big]$ which implies that
\begin{align*}
\beta = \E\big[|\X|\big] - \E\big[|\X'| \big| n \not \in \X\big] \leq \big( \E\big[|\X'|\big] + p_n \big) - \E\big[|\X'|\big] \leq 1.
\end{align*}
Thus $h(\alpha)$ and $h(\beta)$ are well-defined.

Now, by \cref{Lem:FromMarshall2}, we have $\lambda \succ (\gamma,\alpha)$ and $\lambda \succ (\delta,\beta)$. Since the entropy function is concave and sum of concave functions is Schur-concave,
\begin{align*}
\sum_{i=1}^{n-1} h(\gamma_i) + h(\alpha) &\geq \sum_{i=1}^n h(\lambda_i), \\
\sum_{i=1}^{n-1} h(\delta_i) + h(\beta) &\geq \sum_{i=1}^n h(\lambda_i).
\end{align*}
Therefore,
\begin{align}\label{Eq:EntropyLowerBound3}
p_n \bigg( \sum_{i=1}^{n-1} h(\gamma_i) \bigg) + (1-p_n) \bigg( \sum_{i=1}^{n-1} h(\delta_i) \bigg) + \Big( p_n h(\alpha) + (1-p_n) h(\beta) \Big) \geq \sum_{i=1}^n h(\lambda_i).
\end{align}
Also, by \eqref{Eq:EntropyLowerBound2} and concavity of $h$ we have
\begin{align}\label{Eq:EntropyLowerBound4}
h(p_n) \geq p_n h(\alpha) + (1-p_n) h(\beta).
\end{align}
The result follows from \eqref{Eq:EntropyLowerBound1}, \eqref{Eq:EntropyLowerBound3} and \eqref{Eq:EntropyLowerBound4}.
\end{proof}

\begin{Rem}
It is possible to prove a stronger result for determinantal processes. Let $\Y$ be a determinantal process on $[n]$ with kernel $K$ and $\lambda_1,\dots,\lambda_n$ be the eigenvalues of $K$. If $I_1,\dots,I_n$ are independent Bernoulli variables with $I_i \sim \mathrm{Bernoulli}(\lambda_i)$, then the distribution of $\Y$ is majorized by the distribution $(I_1,\dots,I_n)$, namely
\begin{align}\label{Eq:DeterminantalMajorization}
\big( \lambda^A(1-\lambda)^{A^c} : A \subseteq [n] \big) \succ \big( \prb(\Y = A) : A \subseteq [n] \big),
\end{align}
where $\lambda = (\lambda_1,\dots,\lambda_n)$. Since entropy is Schur-Concave, this result implies \cref{Th:EntropyLowerBound} in the case of determiantal processes.

The proof of this result relies on two important properties of determinantal point processes. To avoid digression, we will only present a sketch of the proof. By the spectral decomposition, $K = \sum_{i=1}^n \lambda_i \, v_iv_i^*$, where $v_1,\dots,v_n$ are orthonormal. Define $K_I = \sum_{i=1}^n I_i \, v_iv_i^*$ and let $\X_I$ be the (random) determinantal process with kernel $K_I$. \cite[Theorem 4.5.3]{Hough2009Zeros} states that $\X_I \sim \X$. This implies
\begin{align*}
\big( \prb(\Y = A) : A \subseteq [n] \big) = \big( \lambda^A(1-\lambda)^{A^c} : A \subseteq [n] \big) \, M,
\end{align*}
where $M$ is a $\binom{n}{2} \times \binom{n}{2}$ matrix with $M(A,B) = \prb(\X_I=B \, | \, I=A)$ for $A,B \subseteq [n]$. Now, \eqref{Eq:DeterminantalMajorization} holds if and only if $M$ is doubly stochastic (see \cite{Marshall2011Inequalities}). Note that $\sum_{B \subseteq [n]} M(A,B) =1$ for each $A \subseteq [n]$. It remains to show that $\sum_{A \subseteq [n]} \prb(\X_I = B \, | \, I=A) = 1$ for every $B \subseteq [n]$. For each $B \subseteq [n]$, the point process $[X_I \, | \, I=A]$ is a ``determinantal projection process". There is a nice geometric interpretation for the law of such point processes (see \cite{Hough2009Zeros}). Using this interpretation, the desired equation becomes equivalent to the generalization of the Pythagorean theorem to higher dimensions.
\end{Rem}

We expect that the above result also holds for strongly Rayleigh processes.

\begin{Conj}\label{Conj:MajorizationConjecture}
Let $\X$ be a strongly Rayleigh process on $[n]$ with kernel $g_\X$ and $\lambda_1,\dots,\lambda_n$ be the roots of $\ovl{g}_\X$. Assuming $\lambda = (\lambda_1,\dots,\lambda_n)$, we have
\begin{align*}
\big( \lambda^A(1-\lambda)^{A^c} : A \subseteq [n] \big) \succ \big( \prb(\X = A) : A \subseteq [n] \big)
\end{align*}
\end{Conj}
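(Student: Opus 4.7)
The plan is to prove \cref{Conj:MajorizationConjecture} by induction on $n$, mirroring the proof of \cref{Th:EntropyLowerBound} but tracking the full majorization of distributions rather than only the Schur-concave consequence for entropy. The base case $n = 2$ is already implicit in that proof: from \cref{Lem:BivariateRealStability} the inequality $a_1 a_2 \geq a_0 a_3$ yields at once
\begin{align*}
\big((1-\lambda_1)(1-\lambda_2),\, \lambda_1(1-\lambda_2),\, \lambda_2(1-\lambda_1),\, \lambda_1\lambda_2\big) \succ (a_0, a_1, a_2, a_3),
\end{align*}
which is precisely the $n = 2$ case of the conjecture.

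For the inductive step, condition on whether $n \in \X$. Let $\X' := \X \cap [n-1]$, $p_n := \prb(n \in \X)$, and let $g_1$, $g_0$ be the kernel polynomials of $(\X' \mid n \in \X)$ and $(\X' \mid n \notin \X)$, with root vectors $\gamma = \lambda(\ovl{g}_1)$ and $\delta = \lambda(\ovl{g}_0)$. Write $\nu_\mu$ for the product Bernoulli measure with parameter vector $\mu$ on a compatible index set. The inductive hypothesis supplies doubly stochastic matrices $M_1, M_0$ on $2^{[n-1]}$ witnessing $(\prb(\X' = A \mid n \in \X))_A \prec \nu_\gamma$ and $(\prb(\X' = A \mid n \notin \X))_A \prec \nu_\delta$. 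Partitioning $2^{[n]}$ into $\{A : n \in A\}$ and $\{A : n \notin A\}$, both the law of $\X$ and the target $\nu_\lambda$ decompose naturally into the corresponding blocks, with mixing weights $(p_n, 1-p_n)$ and $(\lambda_n, 1-\lambda_n)$ respectively, and the goal becomes assembling a doubly stochastic matrix $M$ on $2^{[n]}$ realising $(\prb(\X = A))_A = (\lambda^A(1-\lambda)^{A^c})_A \cdot M$.

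The key technical step, and main obstacle, is to combine the scalar interlacing majorizations $\lambda \succ (\gamma, \alpha)$ and $\lambda \succ (\delta, \beta)$ supplied by \cref{Lem:KernelInterlacing} and \cref{Lem:FromMarshall2}, with $\alpha = \sum \lambda_i - \sum \gamma_i$ and $\beta = \sum \lambda_i - \sum \delta_i$ linked by the identity $p_n \alpha + (1-p_n)\beta = p_n$ from the proof of \cref{Th:EntropyLowerBound}, with the two conditional transports into a single doubly stochastic matrix on $2^{[n]}$. Full majorization does not tensorise when Bernoulli parameters are altered, so no standard lifting lemma applies; even in the idealised case where the conditional distributions equal $\nu_\gamma$ and $\nu_\delta$ exactly, one must exhibit an explicit doubly stochastic matrix sending $\nu_\lambda$ to the distribution on $2^{[n]}$ that assigns mass $p_n \, \nu_\gamma(A \cap [n-1])$ to each $A$ with $n \in A$ and mass $(1-p_n)\, \nu_\delta(A \cap [n-1])$ to each $A$ with $n \notin A$, the construction using the scalar identity above to redistribute mass across the two blocks. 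An attractive alternative route would be to first establish the probabilistic interpretation of the roots $\lambda_i$ proposed at the end of \cref{Subsec:S.R.Processes}: produce auxiliary random sets $\X_I$ indexed by $I = (I_1,\dots,I_n)$ with the $I_i$ independent $\mathrm{Bernoulli}(\lambda_i)$ and $\X_I \sim \X$. As in the determinantal sketch following \cref{Th:EntropyLowerBound}, double stochasticity of $M(A,B) := \prb(\X_I = B \mid I = A)$ would then reduce to checking $\sum_A M(A,B) = 1$ for each $B$. Producing such a decomposition in the absence of a spectral theorem for multi-affine real stable polynomials is itself a substantial open problem, but it may be accessible via limits of determinantal processes or symmetric-homogenization-type reductions to the Bernoulli-$k$ setting.
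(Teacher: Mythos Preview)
The statement you are attempting to prove is left open in the paper: it is stated as \cref{Conj:MajorizationConjecture}, and the paper offers no proof. What the paper does establish is the determinantal special case, via the spectral decomposition and \cite[Theorem~4.5.3]{Hough2009Zeros}, and the Schur-concave consequence for entropy in the general strongly Rayleigh case (\cref{Th:EntropyLowerBound}). So there is no ``paper's proof'' to compare against; the question is whether your proposal actually closes the gap.

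It does not. Your base case $n=2$ is correct and is indeed implicit in the proof of \cref{Th:EntropyLowerBound}. The inductive set-up is also the natural one. But the step you yourself flag as the ``main obstacle'' is a genuine gap, not a technicality. From the inductive hypothesis you obtain doubly stochastic matrices transporting $\nu_\gamma$ to the law of $(\X'\mid n\in\X)$ and $\nu_\delta$ to the law of $(\X'\mid n\notin\X)$; from \cref{Lem:KernelInterlacing} and \cref{Lem:FromMarshall2} you obtain the scalar majorizations $\lambda\succ(\gamma,\alpha)$ and $\lambda\succ(\delta,\beta)$. To finish you would need to pass from these scalar majorizations to a majorization at the level of the associated product-Bernoulli \emph{distributions} on $2^{[n]}$, and then glue the two blocks together with weights $p_n$ and $1-p_n$. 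Neither move is available in general: majorization of parameter vectors does not imply majorization of the corresponding product-Bernoulli laws, and a block-diagonal pair of doubly stochastic matrices with mismatched block weights ($p_n$ versus any putative splitting of $\nu_\lambda$) need not extend to a doubly stochastic matrix on $2^{[n]}$. The identity $p_n\alpha+(1-p_n)\beta=p_n$ controls first moments only and is far too weak to manufacture the required transport. In the entropy proof this difficulty evaporates precisely because one is summing a \emph{separable} concave function of the parameters, so Schur-concavity at the scalar level suffices; for full distributional majorization there is no such reduction.

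Your ``alternative route'' via a strongly Rayleigh analogue of \cite[Theorem~4.5.3]{Hough2009Zeros} is exactly what the paper identifies, in the paragraph following the conjecture, as the deeper open problem whose resolution would settle \cref{Conj:MajorizationConjecture}. Proposing it does not advance the argument. As written, your proposal is a reasonable outline of where the difficulty lies, but it is not a proof.
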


The above conjecture can be regarded as a first step in generalizing \cite[Theorem 4.5.3]{Hough2009Zeros}. This conjecture is equivalent to the existence of a doubly stochastic $\binom{n}{2} \times \binom{n}{2}$ matrix $M$ such that 
\begin{align*}
\big( \prb(\X = A) : A \subseteq [n] \big) = \big( \lambda^A(1-\lambda)^{A^c} : A \subseteq [n] \big) \, M.
\end{align*}
A full description of the entries of this matrix will lead to a generalization of \cite[Theorem 4.5.3]{Hough2009Zeros} to strongly Rayleigh processes.

\subsection{Proof of the Paving Property for Strongly Rayleigh Processes}\label{Subsec:StronglyRayleighPaving}

In this subsection we will prove \cref{Th:S.R.P.PavingEntropyVersion}. The following is a corollary of \cref{Prop:ZeroDiagonalPolynomialPaving}.

\begin{Cor}\label{Cor:S.R.P.PavingKernelVersion}
For every positive $\varepsilon$, there is an integer $r$ such that for any strongly Rayleigh process $\X$ on any space $S$, it is possible to partition $S$ into $r$ subsets $S_1,\dots,S_r$ such that
\begin{align*}
\forall i \in [r] \ : \ \M ( \ovl{\xi}_i ) \leq \varepsilon , \;\quad \xi_i(\zv) := g_{\X \cap S_i}(\zv + p),
\end{align*}
where $p=(p_j)_{j \in S}$ and $p_j = \prb(j \in \X)$.
\end{Cor}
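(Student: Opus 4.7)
The plan is to apply \cref{Prop:ZeroDiagonalPolynomialPaving} to the shifted kernel polynomial $h(\zv) := g_\X(\zv + p)$, after identifying $S$ with $[n]$. First, I would verify the hypotheses of that proposition. Multi-affinity and real stability of $h$ are inherited from $g_\X$ via the real translation. Writing $h(\zv) = \sum_A a_A \, \zv^{A^c}$, the leading coefficient is unchanged by the shift, so $a_\emptyset = 1$. A short computation from \eqref{Eq:g_XCoefficients} shows that only the monomials of $g_\X$ indexed by $B = [n] \setminus \{i\}$ and $B = [n]$ contribute to $[\zv^{[n] \setminus \{i\}}]_h$, and the two contributions cancel:
\begin{align*}
a_{\{i\}} = -\prb(i \in \X) + 1 \cdot p_i = 0.
\end{align*}

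The main step is to bound the roots of $\ovl h$ by $\Lambda = 1$. For the upper bound, \cref{Th:ClassificationOfKernel} together with \cref{Lem:AboveRootsOfg} gives $x\1 \in \ab_{g_\X}$ for every $x > 1$; since $p \geq 0$ coordinatewise and $\ab_{g_\X}$ is upward-closed, also $x\1 + p \in \ab_{g_\X}$. As the sign of $g_\X$ on $\ab_{g_\X}$ is determined by its leading coefficient, this gives $\ovl h(x) = g_\X(x\1 + p) > 0$ for $x > 1$, so no root of $\ovl h$ exceeds $1$. For the lower bound, I would exploit that the complementary process $\X^c$ is again strongly Rayleigh (by the closure properties in \cite{Borcea2009Negative}) and satisfies $g_{\X^c}(\zv) = (-1)^n g_\X(\1 - \zv)$, an identity that follows by comparing coefficients via inclusion-exclusion in \eqref{Eq:g_XCoefficients}. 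Setting $q_j := 1 - p_j = \prb(j \in \X^c)$, a direct substitution yields
\begin{align*}
g_{\X^c}(x\1 + q) = (-1)^n g_\X(-x\1 + p) = (-1)^n \, \ovl h(-x),
\end{align*}
so applying the upper bound to $\X^c$ forces the roots of $\ovl h$ to be at least $-1$.

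Given $\varepsilon > 0$, I would then choose $r_0 \geq 4$ with $\bigl(\tfrac{r_0 - 2}{r_0(r_0 - 1)} + 2\sqrt{\tfrac{r_0 - 2}{r_0(r_0 - 1)}}\bigr) \leq \varepsilon$, and apply \cref{Prop:ZeroDiagonalPolynomialPaving} to $h$ with $\Lambda = 1$ to obtain a partition $\{S_1,\dots,S_{r_0^2}\}$ of $[n]$ with $\M(\ovl{\partial^{S_i^c} h}) \leq \varepsilon$; the value $r := r_0^2$ is then the integer promised in the corollary. It remains to identify $\partial^{S_i^c} h$ with $\xi_i$: since $g_\X$ is multi-affine, $\partial^{S_i^c}$ kills the variables indexed by $S_i^c$ and commutes with the shift, so
\begin{align*}
\partial^{S_i^c} h(\zv) = (\partial^{S_i^c} g_\X)(\zv + p|_{S_i}) = g_{\X \cap S_i}(\zv + p|_{S_i}) = \xi_i(\zv),
\end{align*}
where the middle equality is \cref{Prop:RestrictionOfS.R.P.}. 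The main obstacle is the root bound $\Lambda = 1$ on $\ovl h$; the complementation symmetry $g_{\X^c}(\zv) = (-1)^n g_\X(\1 - \zv)$, combined with the closure of the strong Rayleigh class under complementation, is the cleanest route to the two-sided bound.
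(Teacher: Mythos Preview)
Your proposal is correct and follows essentially the same route as the paper's proof: apply \cref{Prop:ZeroDiagonalPolynomialPaving} to the shifted kernel $g_\X(\zv+p)$ with $\Lambda=1$, using \cref{Th:ClassificationOfKernel} and \cref{Lem:AboveRootsOfg} for the upper root bound, and the transformation $(-1)^n g_\X(\1-\zv)$ for the lower bound. Your framing of the lower bound via the complementary process $\X^c$ is just a probabilistic reading of the same polynomial the paper uses (indeed $g_{\X^c}(\zv)=(-1)^n g_\X(\1-\zv)$), and you additionally spell out the identification $\partial^{S_i^c} h=\xi_i$ via \cref{Prop:RestrictionOfS.R.P.}, which the paper leaves implicit.
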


\begin{proof}
Without loss of generality we can assume $S=[n]$. Define $\xi(\zv) = g_\X(\zv + p)$. We claim that $\xi$ satisfies the assumptions of \cref{Prop:ZeroDiagonalPolynomialPaving} with $\Lambda=1$. It is straightforward to verify that $\xi$ is multi-affine real stable, $[z_1 \dots z_n]_\xi = 1$ and $[z_1 \dots z_{i-1} z_{i+1} \dots z_n]_\xi = 0$ for all $i \in [n]$. Now, we must prove that $\M( \ovl{\xi} ) \leq 1$. By \cref{Th:ClassificationOfKernel}, we have $\lambda_i(g_\X) \in [0,1]$ for $i=1,\dots,n$. Therefore, $g_\X$ satisfies the assumptions of \cref{Lem:AboveRootsOfg} and thus $b\1 \in \ab_{g_\X}$ for every $b>1$. Let $b>1$ and $u \geq b\1$. Since $p \geq 0$, we have $u+p \geq b\1$ and so $\xi(u) = g_\X(u+p) \neq 0$. Therefore, $b\1 \in \ab_\xi$ for $b>1$, which implies that $\lambda_i(\,\ovl{\xi}\,) \leq 1$ for $i=1,\dots,n$.

By applying \cref{Lem:AboveRootsOfg} to $(-1)^n g_\X(1-\zv)$ we get $b\1 \in \ab_{g_\X(-\zv)}$ for every $b>0$. Now, we claim that if $b>1$, then $b\1 \in \ab_{\xi(-\zv)}$. Let $u \geq b\1$. Since $p \leq 1$, we have $u-p \geq 0$ and so $\xi(-u) = g_\X(-u+p) = g_\X\big(-(u-p)\big) \neq 0$. This proves our claim, which implies that $\lambda_i(\ovl{\xi}) \geq -1$ for $i=1,\dots,n$.

We showed that all the roots of $\ovl{\xi}$ lie in $[-1,1]$. Now, apply \cref{Prop:ZeroDiagonalPolynomialPaving} to $\xi$ and choose a large $r$.
\end{proof}

In order to deduce \cref{Th:S.R.P.PavingEntropyVersion} from the above result, it is sufficient to prove the following.

\begin{Prop}\label{Prop:ProbabilisticInterpretation}
Assume that $\X$ is a strongly Rayleigh process on $[n]$ with kernel $g_\X$. Set $p_i = \prb(i \in \X)$ and $\xi(z_1,\dots,z_n) = g_\X(z_1+p_1,\dots,z_n+p_n)$. For every positive $\delta$, there exists a positive $\varepsilon$ such that if all the roots of $\ovl{\xi}$ have absolute value less than $\varepsilon$, then
\begin{align*}
\bigg| \dfrac{1}{n} H(\X) - \dfrac{1}{n} \sum_{i=1}^n h(p_i) \bigg| < \delta.
\end{align*}
\end{Prop}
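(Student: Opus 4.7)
The plan is to bound the normalized entropy difference above and below. The upper bound is immediate from subadditivity of entropy: writing $X_i = \mathbf{1}\{i \in \X\}$, we have $H(\X) \leq \sum_{i=1}^n H(X_i) = \sum h(p_i)$, so $\frac{1}{n} H(\X) - \frac{1}{n} \sum h(p_i) \leq 0$ unconditionally. For the substantive lower bound, I would invoke \cref{Th:EntropyLowerBound} to get $H(\X) \geq \sum_{i=1}^n h(\lambda_i)$, where $\lambda_1, \ldots, \lambda_n$ are the roots of $\ovl{g}_\X$. It then suffices to show that when all roots of $\ovl{\xi}$ have absolute value at most $\varepsilon$, we have $\frac{1}{n} \lvert \sum h(\lambda_i) - \sum h(p_i) \rvert < \delta$, with $\varepsilon$ depending on $\delta$ but not on $n$.

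To establish this, I would argue that the vectors $(\lambda_i)$ and $(p_i)$ are close after sorting, since $h$ is uniformly continuous on the compact interval $[0,1]$ with some modulus $\omega_h$, so once we have $\max_i |\lambda_{(i)} - p_{(i)}| < \eta$ we are done: $\frac{1}{n} |\sum h(\lambda_i) - \sum h(p_i)| < \omega_h(\eta)$, which can be made smaller than $\delta$ by choosing $\eta$ small. The closeness of the sorted sequences would follow from the closeness of $\ovl{g}_\X$ to $\prod_i (x - p_i)$ as monic polynomials with all roots in $[0,1]$, which is exactly the diagonalization of the product-Bernoulli kernel.

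To extract this coefficient closeness from the hypothesis, I would use the shift trick: let $\tilde{\xi}(\zv) = \xi(\zv - \varepsilon \1)$, whose diagonalization has all roots in $[0, 2\varepsilon]$ and hence nonnegative. By \cref{Lem:PositiveRootedPolynomialCoefficientSigns}, the coefficients $\tilde{b}_B$ of $\tilde{\xi}$ satisfy the alternating sign pattern $(-1)^{n-|B|} \tilde{b}_B \geq 0$, so the sum $\bigl|\sum_{|B|=k} \tilde{b}_B\bigr|$, controlled by elementary symmetric polynomials of roots of magnitude at most $2\varepsilon$, in fact bounds each individual $|\tilde{b}_B|$. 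Shifting back via $\xi(\zv) = \tilde\xi(\zv + \varepsilon \1)$ and then via $g_\X(\zv) = \xi(\zv - p)$ transfers these bounds to the coefficients of $g_\X$, showing $g_\X$ is close to $\prod_i(z_i - p_i)$ coefficient-wise; specializing to the diagonal gives closeness of $\ovl{g}_\X$ to $\prod_i (x - p_i)$.

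The main obstacle I foresee is precisely the last step in the continuity chain, namely showing that closeness of two monic degree-$n$ polynomials with all roots in $[0,1]$ forces coordinate-wise closeness of the sorted roots, with the quantitative relationship being uniform in $n$ (after the $\frac{1}{n}$ normalization). The naive root-continuity estimates degrade badly with degree. A possible workaround is to exploit negative association (which gives $e_k(\lambda) \leq e_k(p)$ for all $k$, with equality at $k=1$) together with the stability structure to upgrade weak closeness to the empirical-measure closeness needed, using Wasserstein or Hausdorff estimates on root distributions in $[0,1]$ where $h$ is uniformly continuous; this is where the careful work will need to be done.
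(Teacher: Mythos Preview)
Your overall skeleton matches the paper's: the upper bound $H(\X) \leq \sum_i h(p_i)$ from subadditivity and the lower bound $H(\X) \geq \sum_i h(\lambda_i)$ from \cref{Th:EntropyLowerBound} are exactly what the paper uses, and you correctly identify that the remaining task is to show $\sum_i h(\lambda_i) \geq \sum_i h(p_i) - n\delta$ once the roots $\gamma_i$ of $\ovl{\xi}$ are small.

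Where you diverge is in how you attack that remaining inequality. You aim for pointwise closeness $\max_i |\lambda_{(i)} - p_{(i)}| < \eta$ via a coefficient/root-continuity argument, and you correctly flag the obstacle: naive root-continuity estimates for monic degree-$n$ polynomials degrade with $n$, and the Wasserstein/negative-association workaround you sketch is not an actual argument. This gap is genuine; the step as written does not close, and the elaborate coefficient-shift machinery before it (via \cref{Lem:PositiveRootedPolynomialCoefficientSigns}) is not needed.

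The paper sidesteps this obstacle entirely by a different route. It builds an auxiliary homogeneous polynomial $F(\zv,\mathbf{u},w)$, hyperbolic with respect to $(\1,\0,0)$, whose one-dimensional restrictions along that direction at the points $(\0,-p,1)$, $(\0,\0,1)$, $(\0,-p,0)$ recover $\ovl{g}_\X$, $\ovl{\xi}$, and $\prod_i(t+p_i)$ respectively. Gurvits's majorization inequality for hyperbolic polynomials (\cref{Th:HyperbolicMajorization}) then gives directly
\[
\lambda(\ovl{g}_\X) \;\prec\; \lambda(\ovl{\xi}) + p_\downarrow.
\]
Since $x \mapsto \sum_i h(x_i)$ is Schur-concave, this yields $\sum_i h(\lambda_i) \geq \sum_i h(\gamma_i + p_{(i)})$, and now only the uniform continuity of $h$ on $[0,1]$ (applied termwise, with $|\gamma_i| < \varepsilon$) is needed to finish --- no root-continuity in the degree, and no pointwise comparison of $\lambda_{(i)}$ with $p_{(i)}$. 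The hyperbolic-majorization step is the key idea you are missing; it replaces your whole coefficient-closeness programme.
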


We will use majorization properties of hyperbolic polynomials. A homogeneous polynomial $p \in \R[z_1,\dots,z_n]$ is \textit{hyperbolic with respect to} a vector $e \in \R^n$ if $p(e)>0$ and $p(te+\alpha) \in \R[t]$ is real rooted for all $\alpha \in \R^n$. We use $\lambda_\alpha(p)$ to denote the vector of roots of the polynomial $p(te+\alpha)$ in the non-increasing order. The following theorem is proved in \cite{Gurvits2004Combinatorics}.

\begin{Th}\label{Th:HyperbolicMajorization}
Let $p \in \R[z_1,\dots,z_n]$ be hyperbolic with respect to $e$. For $v,u \in \R^n$ we have
\begin{align*}
\lambda_{v+u}(p) \prec \lambda_v(p) + \lambda_u(p).
\end{align*}
\end{Th}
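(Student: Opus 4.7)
The plan is to establish the majorization $\lambda_{v+u}(p) \prec \lambda_v(p) + \lambda_u(p)$ by verifying its two defining conditions: the equality of total sums, and the partial-sum inequalities $S_k(v+u) \leq S_k(v) + S_k(u)$ for $1 \leq k < n$, where $S_k(w) := \sum_{i=1}^k \lambda_i(w)$. The $k=n$ equality is a linearity statement: writing $p(te+w) = p(e)\prod_i(t-\lambda_i(w)) = p(e)\,t^n + a_{n-1}(w)\,t^{n-1} + \cdots$ and using that $p$ is homogeneous of degree $n$, the coefficient $a_{n-1}(w)$ depends linearly on $w$ (it is the directional derivative of $p$ at $e$ in direction $w$, times a constant), so $\sum_i \lambda_i(w) = -a_{n-1}(w)/p(e)$ is a linear functional of $w$.

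For the partial sums, the strategy is to prove that each $S_k$ is both positively homogeneous and convex as a function of $w$ on $\R^n$, since a positively homogeneous convex function is automatically subadditive. Positive homogeneity $S_k(\alpha w) = \alpha\,S_k(w)$ for $\alpha > 0$ is immediate from the identity $p(te + \alpha w) = \alpha^n\,p(\alpha^{-1}te + w)$, which gives $\lambda_i(\alpha w) = \alpha\,\lambda_i(w)$.

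The heart of the proof is convexity of $S_k$. The base case $k=1$ is the convexity of the largest root $\lambda_1(w)$, which follows from G\aa rding's theorem on convexity of the hyperbolicity cone $\Lambda_+$ of $p$ with respect to $e$: one writes $\lambda_1(w)$ as the infimum of those $t$ for which $te - w$ lies in $\Lambda_+$ (with the appropriate sign convention), and convexity of $\Lambda_+$ transfers to convexity of this infimum. For $1 < k < n$, I would first note that the iterated directional derivative $p_{n-k}(z) := (\partial_e^{n-k} p)(z)$ is again hyperbolic with respect to $e$, now of degree $k$; this is a standard consequence of Rolle's theorem applied on each line through $e$. One can then aim to express $S_k(w)$ as a supremum of linear functionals of $w$, in the spirit of the Ky Fan variational formula $\sum_{i=1}^k \lambda_i(A) = \max_{U^*U = I_k} \mathrm{tr}(U^* A U)$ for Hermitian matrices; such a supremum representation makes convexity of $S_k$ manifest.

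The main obstacle is the convexity of $S_k$ for $k \geq 2$. In the matrix case ($p = \det$, $e = I$) this is Ky Fan's inequality with a short variational proof, but for general hyperbolic polynomials the analogous variational formula is less transparent and historically was obtained either via the Bauschke--G\"uler--Lewis--Sendov framework or through the theory of compound hyperbolic polynomials developed by Gurvits. Once convexity of every $S_k$ is in hand, the proof concludes by combining the subadditivity of $S_k$ for $k<n$ (convex plus positively homogeneous implies subadditive) with the linearity of $S_n$ established in the first paragraph, which together are exactly the two defining conditions of $\lambda_{v+u}(p) \prec \lambda_v(p) + \lambda_u(p)$.
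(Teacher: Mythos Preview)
The paper does not give its own proof of this theorem; it simply cites Gurvits (\emph{Combinatorics hides in hyperbolic polynomials}, 2004) and uses the result as a black box. So there is no ``paper's proof'' to compare against.

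Your outline is the standard route and is correct in structure: majorization $\lambda_{v+u}(p) \prec \lambda_v(p) + \lambda_u(p)$ is equivalent to linearity of $S_n(w)=\sum_i \lambda_i(w)$ together with subadditivity of each $S_k$, and you correctly derive the former from homogeneity of $p$ and the latter from convexity plus positive homogeneity of $S_k$. The only substantive step you do not actually carry out is the convexity of $S_k$ for $2\le k\le n-1$; you (rightly) flag this as the hard part and point to the Bauschke--G\"uler--Lewis--Sendov result or Gurvits's compound-polynomial argument. Since that is precisely the content of the reference the paper cites, your proposal and the paper end up invoking the same external input, just with your version unpacking the reduction to convexity of $S_k$ explicitly. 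If you want a self-contained proof, the cleanest path is to show that the $k$th directional compound of $p$ (a homogeneous polynomial whose eigenvalues at $w$ are the $k$-sums $\lambda_{i_1}(w)+\cdots+\lambda_{i_k}(w)$) is again hyperbolic in direction $e$, so that $S_k(w)$ is the largest eigenvalue of a hyperbolic polynomial and hence convex by the $k=1$ case you already handled.
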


We will use the following lemma in the proof of \cref{Prop:ProbabilisticInterpretation}.

\begin{lemma}\label{Lem:ProofOfProbabilisticInterpretation}
Let $p_\downarrow$ with $p_\downarrow = \big(p_{(1)},\dots,p_{(n)}\big)$, be the vector of $p_i$'s in the non-increasing order. We have
\begin{align*}
\lambda(\ovl{g}_\X) \prec \lambda(\ovl{\xi}) + p_\downarrow.
\end{align*}
\end{lemma}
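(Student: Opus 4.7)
The plan is to deduce the majorization from \cref{Th:HyperbolicMajorization} by applying it to a suitable homogenization of $g_\X$. Define the homogeneous polynomial
\[
\widetilde{P}(z_0, \zv) := z_0^n \, g_\X(\zv/z_0)
\]
of degree $n$ in $n+1$ variables; this is a genuine polynomial because $g_\X$ is multi-affine with $[z_1 \cdots z_n]_{g_\X} = 1$ by \cref{Th:ClassificationOfKernel}, and in particular $\widetilde{P}(0, \zv) = z_1 \cdots z_n$.

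The first step is to verify that $\widetilde{P}$ is hyperbolic with respect to $e := (0, \1) \in \R^{n+1}$. The value $\widetilde{P}(e) = 1 \neq 0$ is immediate. For real-rootedness of $t \mapsto \widetilde{P}(te + \alpha)$, write $\alpha = (\alpha_0, \alpha') \in \R \times \R^n$: when $\alpha_0 = 0$ the polynomial reduces to $\prod_i (t + \alpha'_i)$, and when $\alpha_0 \neq 0$ it equals $\alpha_0^n \, g_\X\!\big(s\1 + \alpha'/\alpha_0\big)$ under the substitution $s = t/\alpha_0$, which is real-rooted by \cref{Prop:EquivalentOfStablitiy} applied with the direction $\1 \in \R_+^n$.

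The second step is to apply \cref{Th:HyperbolicMajorization} with the particular choice $v := (1, p)$ and $u := (0, -p)$, so that $v + u = (1, \0)$. Direct substitution into $\widetilde{P}(te + \cdot\,)$ gives
\begin{align*}
\widetilde{P}(te + v + u) &= \widetilde{P}(1, t\1) = g_\X(t\1) = \ovl{g}_\X(t), \\
\widetilde{P}(te + v) &= \widetilde{P}(1, p + t\1) = g_\X(t\1 + p) = \ovl{\xi}(t), \\
\widetilde{P}(te + u) &= \widetilde{P}(0, t\1 - p) = \prod_{i=1}^n (t - p_i).
\end{align*}
Reading off the roots in non-increasing order yields $\lambda_{v+u}(\widetilde{P}) = \lambda(\ovl{g}_\X)$, $\lambda_v(\widetilde{P}) = \lambda(\ovl{\xi})$, and $\lambda_u(\widetilde{P}) = p_\downarrow$. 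The conclusion $\lambda(\ovl{g}_\X) \prec \lambda(\ovl{\xi}) + p_\downarrow$ is then exactly the content of \cref{Th:HyperbolicMajorization}.

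The only genuinely creative step is the choice of the split $(1, \0) = (1, p) + (0, -p)$, engineered so that the auxiliary contribution $\widetilde{P}(te + u)$ lives on the hyperplane at infinity where $\widetilde{P}$ degenerates to $z_1 \cdots z_n$, and therefore produces the marginal vector $p_\downarrow$ as its root vector; everything else is routine once the homogenization and the direction $e$ are in place.
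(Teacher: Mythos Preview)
Your proof is correct and follows the same strategy as the paper: construct an auxiliary hyperbolic polynomial and apply \cref{Th:HyperbolicMajorization} to a decomposition that isolates $\lambda(\ovl{g}_\X)$, $\lambda(\ovl{\xi})$, and $p_\downarrow$ as the three root vectors. Your construction is in fact more economical than the paper's: you use the standard $(n{+}1)$-variable homogenization $\widetilde P(z_0,\zv)=z_0^n g_\X(\zv/z_0)$ (whose hyperbolicity in the direction $(0,\1)$ is exactly \cref{Prop:StabilityHyperbolicity}), whereas the paper builds a $(2n{+}1)$-variable polynomial $F(\zv,\mathbf u,w)$ that effectively depends only on the sums $z_i+u_i$ and on $w$, so half of its variables are redundant.
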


\begin{proof}
Assume $\xi(\zv) = \sum_{A \subseteq [n]} b_A \, \zv^{A^c}$. Since $g_\X(z_1,\dots,z_n) = \xi(z_1-p_1,\dots,z_n-p_n)$, we have
\begin{align*}
g_\X(\zv) = \sum_{A \subseteq [n]} \Bigg( \sum_{B \subseteq A} b_B (-p)^{A \backslash B} \Bigg) \, \zv^{A^c}.
\end{align*}
Define the polynomial $F \in \R[z_1,\dots,z_n,u_1,\dots,u_n,w]$ as
\begin{align*}
F(\zv,\mathbf{u},w) = \sum_{A \subseteq [n]} \Bigg( \sum_{B \subseteq A} b_B \, w^{|B|} \, \mathbf{u}^{A \backslash B} \Bigg) \, \zv^{A^c},
\end{align*}
where $\mathbf{u} = (u_1,\dots,u_n)$ and $\zv = (z_1,\dots,z_n)$. We claim that $F$ is hyperbolic with respect to $e \in \R^{2n+1}$, where $e_1=\dots=e_n=1$ and $e_{n+1}=\dots=e_{2n+1}=0$. Let $\bld{z} = (\zv,\mathbf{u},w) \in \R^{2n+1}$. If $w \neq 0$, then
\begin{align*}
F(te+\bld{z}) = w^n \xi \bigg( \dfrac{1}{w}(t+z_1+u_1),\dots,\dfrac{1}{w}(t+z_1+u_1) \bigg).
\end{align*}
Since $\xi$ is real stable and $u_1,\dots,u_n,w \in \R$, the above polynomial is real rooted. If $w = 0$, then
\begin{align*}
F(te+\bld{z}) = \sum_{A \subseteq [n]} \mathbf{u}^A (\bld{t}+\zv)^{A^c} = \prod_{i=1}^n (t+z_i+u_i),
\end{align*}
where $\bld{t} = (t,\dots,t)$. This polynomial is also real rooted and our claim follows.

Now, by \cref{Th:HyperbolicMajorization},
\begin{align*}
\lambda_{(\0,-p,1)}(F) \prec \lambda_{(\0,\0,1)}(F) + \lambda_{(\0,-p,0)}(F),
\end{align*}
where $\0 = (0,\dots,0) \in \R^n$. It is straightforward to verify that
\begin{align*}
\lambda_{(\0,-p,1)}(F) = \lambda(\ovl{g}_\X), \qquad \lambda_{(\0,\0,1)}(F) = \lambda(\ovl{\xi}), \qquad \lambda_{(\0,-p,0)}(F) = p_\downarrow.
\end{align*}
The result follows.
\end{proof}

Now we are ready to prove \cref{Prop:ProbabilisticInterpretation}.

\begin{proof}[Proof of \cref{Prop:ProbabilisticInterpretation}]
Let $\gamma_1,\dots,\gamma_n$, indexed in non-increasing order, be the roots of $\ovl{\xi}$ and $\lambda_1,\dots,\lambda_n$, indexed in non-increasing order, be the roots of $\ovl{g}_\X$. Since the entropy function is concave and sum of concave functions is Schur-concave, it follows from \cref{Lem:ProofOfProbabilisticInterpretation} that
\begin{align}\label{Eq:ProofOfProbabilisticInterpretation1}
\sum_{i=1}^n h\big(\gamma_i + p_{(i)}\big) \leq \sum_{i=1}^n h(\lambda_i).
\end{align}
Choose $\varepsilon$ such that if $|x-y| < \varepsilon$, then $\big|h(x)-h(y)\big| < \delta$. Therefore, if $|\gamma_i| < \varepsilon$ for all $i \in [n]$, then by \eqref{Eq:ProofOfProbabilisticInterpretation1},
\begin{align}\label{Eq:ProofOfProbabilisticInterpretation2}
\sum_{i=1}^n h(\lambda_i) \geq \sum_{i=1}^n h\big(\gamma_i + p_{(i)}\big) \geq \sum_{i=1}^n h(p_i) - n\delta.
\end{align}
On the other hand, by \cref{Th:EntropyLowerBound},
\begin{align}\label{Eq:ProofOfProbabilisticInterpretation3}
\sum_{i=1}^n h(\lambda_i) \leq H(\X) \leq \sum_{i=1}^n h(p_i).
\end{align}
Combining \eqref{Eq:ProofOfProbabilisticInterpretation1}, \eqref{Eq:ProofOfProbabilisticInterpretation2} and \eqref{Eq:ProofOfProbabilisticInterpretation3} we get
\begin{align*}
\sum_{i=1}^n h(p_i) - n\delta \leq H(\X) \leq \sum_{i=1}^n h(p_i).
\end{align*}
This completes the proof.
\end{proof}

\subsubsection*{Acknowledgment}

We would like to express our deepest appreciation to Amir Daneshgar and Mohammadsadegh Zamani for their valuable and instructive comments on an early draft of this paper. We also wish to thank Ziheng Zhu who pointed out to a technical mistake in an earlier version of this paper.

\printbibliography

\appendix

\section{Appendix: Proof of \texorpdfstring{\cref{Lem:AbLemma}}{Lemma 3.8}}\label{App:HyperbolicPolynomials}

We recall some results from the theory of hyperbolic polynomials.

\begin{Def}\label{Def:HyperbolicPolynomials}
A homogeneous polynomial $p \in \R[z_1,\dots,z_n]$ is \textit{hyperbolic with respect to} a vector $e \in \R^n$ if $p(e)>0$ and $p(te+\alpha) \in \R[t]$ is real rooted for all $\alpha \in \R^n$. We use $\lambda_\alpha(p)$ to denote the vector of roots of the polynomial $p(te+\alpha)$ in the non-increasing order.
\end{Def}

Recall that the \textit{homogenization} of a polynomial $p \in \C[z_1,\dots,z_n]$ of degree $d$ is the unique homogeneous polynomial $p_H$ of degree $d$ in the variables $z_1,\dots,z_{n+1}$ such that
\begin{align*}
p_H(z_1,\dots,z_n,1) = p(z_1,\dots,z_n).
\end{align*}
The relationship between real stability and hyperbolicity is made explicit in the following proposition.

\begin{Prop}[Proposition 1.1 of \cite{Borcea2010Multivariate}]\label{Prop:StabilityHyperbolicity}
A polynomial $p \in \R[z_1,\dots,z_n]$ is real stable if and only if its homogenization is hyperbolic with respect to all vectors $e \in \R^{n+1}$ with $e_i>0$ for $i \in [n]$ and $e_{n+1}=0$.
\end{Prop}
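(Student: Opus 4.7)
The plan is to reduce the hyperbolicity of $p_H$ along such vectors $e$ to the restriction characterization of real stability given by \cref{Prop:EquivalentOfStablitiy}, namely that $p \in \R[z_1,\dots,z_n]$ is real stable iff $t \mapsto p(tv + \alpha)$ is real rooted for every $\alpha \in \R^n$ and $v \in \R_+^n$.

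For the forward direction, assume $p$ is real stable, let $d = \deg p$, and fix $e \in \R^{n+1}$ with $e_1, \dots, e_n > 0$ and $e_{n+1} = 0$. Given $\alpha \in \R^{n+1}$, I would split into two cases. When $\alpha_{n+1} \neq 0$, the homogeneity of $p_H$ of degree $d$ gives
\begin{align*}
p_H(te + \alpha) = \alpha_{n+1}^d \, p\left( \frac{te_1 + \alpha_1}{\alpha_{n+1}}, \dots, \frac{te_n + \alpha_n}{\alpha_{n+1}} \right),
\end{align*}
which, after absorbing $|\alpha_{n+1}|$ via a linear substitution in $t$, becomes $p$ evaluated along a real-affine line with velocity vector in $\R_+^n$ (or its negation, handled by $t \mapsto -t$); \cref{Prop:EquivalentOfStablitiy} then yields real-rootedness. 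When $\alpha_{n+1} = 0$, only the top-degree homogeneous part $p_{\mathrm{top}}$ of $p$ contributes, and since $p_{\mathrm{top}}(z) = \lim_{s \to \infty} s^{-d} p(sz)$ is a limit of real stable polynomials, it is either identically zero or real stable by a standard Hurwitz-type argument for multivariate stability; another application of \cref{Prop:EquivalentOfStablitiy} finishes this case.

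For the converse, assume $p_H$ is hyperbolic with respect to every such $e$. Given $z = \alpha + i\beta \in \mathbb{H}^n$ (so $\beta \in \R_+^n$), set $\tilde e = (\beta_1, \dots, \beta_n, 0) \in \R^{n+1}$ and $\tilde \alpha = (\alpha_1, \dots, \alpha_n, 1) \in \R^{n+1}$. Since $\tilde e_{n+1} + \tilde \alpha_{n+1} = 1$, evaluating gives $p_H(t\tilde e + \tilde \alpha) = p(\alpha + t\beta)$, which is real rooted in $t$ by hyperbolicity with respect to $\tilde e$. Hence $p(\alpha + i\beta) \neq 0$, and $p$ is real stable.

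The main subtlety I expect is the positivity condition $p_H(e) > 0$ in the definition of hyperbolicity: this amounts to controlling the sign of the leading coefficient of $p$ along the positive ray, and the cleanest way to handle it is either to normalize by the sign of $p_{\mathrm{top}}(e_1,\dots,e_n)$ (noting that real stability is invariant under $p \mapsto -p$) or to adopt the slightly broader convention under which hyperbolicity only requires $p_H(e) \neq 0$. This is a bookkeeping nuisance rather than a substantive obstacle; the real content lies in the case split above and in treating the degenerate direction $\alpha_{n+1} = 0$ via the Hurwitz limit argument.
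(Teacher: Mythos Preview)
The paper does not supply its own proof of this proposition: it is quoted verbatim as Proposition~1.1 of Borcea--Br\"and\'en and used as a black box in the appendix. So there is no in-paper argument to compare against.

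Your argument is correct and is essentially the standard one. The forward direction via the case split on $\alpha_{n+1}$ is the right move; the homogeneity calculation in the case $\alpha_{n+1}\neq 0$ reduces cleanly to \cref{Prop:EquivalentOfStablitiy}, and the degenerate case $\alpha_{n+1}=0$ is exactly the statement that the top homogeneous part $p_{\mathrm{top}}$ is itself real stable, which follows from the Hurwitz-type closure of real stability under locally uniform limits. The converse is also fine: with $\tilde e=(\beta,0)$ and $\tilde\alpha=(\alpha,1)$ one has $p_H(t\tilde e+\tilde\alpha)=p(\alpha+t\beta)$ because the last coordinate of $t\tilde e+\tilde\alpha$ is identically $1$; real-rootedness then rules out $t=i$ as a root (the leading coefficient is $p_H(\tilde e)>0$ by the hyperbolicity hypothesis, so the restriction is not the zero polynomial).

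Your remark about the positivity $p_H(e)>0$ is well taken: as literally stated with the paper's \cref{Def:HyperbolicPolynomials}, the forward implication fails for, e.g., $p(z)=-z$, and one must either pass to $-p$ or read ``hyperbolic'' with the weaker requirement $p_H(e)\neq 0$. This does not affect any downstream use in the appendix, where only the hyperbolicity cone (which is insensitive to the sign of $p_H$) is needed.
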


Above the roots of a real stable polynomial is akin to the concept of hyperbolicity cone of hyperbolic polynomials.

\begin{Def}
Let $p \in \R[z_1,\dots,z_n]$ be hyperbolic with respect to $e\in\R^n$. The \textit{hyperbolicity cone} of $p$, denoted $C_e(p)$, is $\{x \in \R^n : \ p(x+te) \neq 0 \;\; \text{for} \;\; t \geq 0\}$.
\end{Def}

The following result is due to G\aa rding \cite{Garding1959Inequality}.

\begin{Prop}\label{Prop:HyperbolicityConeProperties}
Let $p \in \R[z_1,\dots,z_n]$ be hyperbolic with respect to $e \in \R^n$. Then
\begin{enumerate}
\item
$C_e(p)$ is convex;

\item
$C_e(p)$ is equal to the connected component of the set $\{x \in \R^n : p(x) \neq 0\}$ that contains $e$.

\item
$p$ is hyperbolic with respect to any $u \in C_e(p)$ and $C_u(p) = C_e(p)$.
\end{enumerate}
\end{Prop}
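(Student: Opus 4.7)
The plan is to introduce continuous root functions along the line through $x$ in direction $e$, and derive all three properties from a single analytic input -- the convexity of the largest root function.

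For $x \in \R^n$, the univariate polynomial $t \mapsto p(x+te)$ has degree $d := \deg p$, positive leading coefficient $p(e)$, and all real roots by hyperbolicity; order them $\mu_1(x) \geq \dots \geq \mu_d(x)$. The functions $\mu_j$ are continuous on $\R^n$, and evaluating at $t = 0$ gives the factorization $p(x) = (-1)^d p(e) \prod_{j=1}^d \mu_j(x)$. Homogeneity of $p$ yields $p(e + te) = (1+t)^d p(e)$, so $\mu_j(e) = -1$ for all $j$. By the definition of $C_e(p)$, we have $x \in C_e(p)$ iff $\mu_1(x) < 0$, and in particular $e \in C_e(p)$.

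The crux is to prove that $\mu_1 : \R^n \to \R$ is a continuous convex function. This is G\aa{}rding's classical theorem, and the standard derivation proceeds via G\aa{}rding's inequality $p(x+y)^{1/d} \geq p(x)^{1/d} + p(y)^{1/d}$ for $x, y \in C_e(p)$, established by a factorization/AM-GM argument on the roots of $p(x + ty)$ together with the auxiliary fact that $p$ is itself hyperbolic with respect to any element of $C_e(p)$ (proved en route by a bivariate interlacing or perturbation argument on the restriction of $p$ to the plane spanned by $e$ and a candidate direction). This analytic core is the only genuine work required.

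Granting convexity of $\mu_1$, the three parts become essentially topological. For (1), $C_e(p) = \{\mu_1 < 0\}$ is the strict sublevel set of a continuous convex function, hence open and convex. For (2), the product formula for $p$ gives $C_e(p) \subseteq \{p \neq 0\}$, and $\partial C_e(p) \subseteq \{\mu_1 = 0\} \subseteq \{p = 0\}$; consequently $C_e(p)$ is both open and closed in the connected component $K$ of $\{p \neq 0\}$ containing $e$, forcing $C_e(p) = K$ by connectedness. For (3), the inequality $p(u) > 0$ for $u \in C_e(p)$ is immediate from continuity of $p$ on the connected set $C_e(p)$ with $p(e) > 0$; hyperbolicity of $p$ with respect to $u$ is the auxiliary fact mentioned above and can be established by forming the homogeneous trivariate polynomial $\Psi(s, t, r) := p(se + tu + r\alpha)$ for fixed $\alpha \in \R^n$, observing that $\Psi$ is hyperbolic with respect to $(1, 0, 0)$ (because $p$ is hyperbolic with respect to $e$) and that $(0, 1, 0)$ lies in its hyperbolicity cone (because $u \in C_e(p)$), then invoking G\aa{}rding's theorem for $\Psi$ to get hyperbolicity with respect to $(0, 1, 0)$, and finally restricting to $s = 0, r = 1$ to obtain real-rootedness of $t \mapsto p(\alpha + tu)$. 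Once this is in hand, $C_u(p) = C_e(p)$ follows from (2), since both cones are identified with the same connected component of $\{p \neq 0\}$. The main obstacle is the convexity of $\mu_1$ / G\aa{}rding's inequality -- the only genuine analytic ingredient -- while everything else is topological bookkeeping.
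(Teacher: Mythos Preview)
The paper does not prove this proposition; it simply attributes the result to G\aa{}rding and cites \cite{Garding1959Inequality}. Your proposal therefore goes beyond the paper by sketching an actual argument.

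Your outline follows the standard G\aa{}rding development and is correct in substance, but one logical point needs tightening. The argument you give for part~(3) is circular as written: you reduce hyperbolicity of $p$ with respect to $u$ to the trivariate polynomial $\Psi$ and then ``invoke G\aa{}rding's theorem for $\Psi$'' --- which is precisely the proposition being proved. You do flag the correct resolution earlier, namely the direct ``bivariate interlacing or perturbation argument'': one first establishes~(3) by a Hurwitz-type continuity argument on the roots of $s \mapsto p(\alpha + se + tu)$ as $t$ moves from $0$ to $1$ (using $u \in C_e(p)$ to keep the roots real), and only afterwards uses~(3) to derive G\aa{}rding's inequality and the convexity of $\mu_1$. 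The logical order is therefore (3) first, then (1) via convexity of $\mu_1$, then (2) by the clopen argument --- not (3) as a consequence of convexity. With that reordering your sketch is sound and matches the classical proof.
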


The connection between above the roots and hyperbolicity cone is made explicit in the following result which follows immediately from the above proposition.

\begin{Cor}\label{Cor:AbHyperbolicityCone}
Let $p \in \Hcal_n(\R)$ and $p_H$ be its homogenization. For every $e \in \R_+^n$ we have
\begin{align*}
\ovl{\ab_p} \times \{1\} = \ovl{C_{(e,0)}(p_H)} \cap \{z_{n+1} = 1\},
\end{align*}
where $\ovl{U}$ denotes the closure of set $U$.
\end{Cor}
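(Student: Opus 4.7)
The plan is to prove the two inclusions separately, using the fact (from \cref{Prop:StabilityHyperbolicity}) that $p_H$ is hyperbolic with respect to \emph{every} vector of the form $(e',0)$ with $e' \in \R_+^n$, in order to reconcile the one-parameter non-vanishing condition defining the hyperbolicity cone with the orthant non-vanishing condition defining $\ab_p$.

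For the forward inclusion, I would first prove the unclosed statement $\ab_p \times \{1\} \subseteq C_{(e,0)}(p_H)$. Given $u \in \ab_p$ and $t \geq 0$, the point $u + te$ satisfies $u + te \geq u$ coordinatewise (since $e \in \R_+^n$), so $p(u+te) \neq 0$. Unwinding the homogenization, this says $p_H\bigl((u,1) + t(e,0)\bigr) = p(u+te) \neq 0$ for every $t \geq 0$, which places $(u,1)$ in the hyperbolicity cone. Since $\{z_{n+1}=1\}$ is closed, taking closures gives $\ovl{\ab_p} \times \{1\} \subseteq \ovl{C_{(e,0)}(p_H)} \cap \{z_{n+1}=1\}$.

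For the reverse inclusion, the crucial step is to show that $C_{(e,0)}(p_H) = C_{(e',0)}(p_H)$ for \emph{every} $e' \in \R_+^n$. By \cref{Prop:HyperbolicityConeProperties}(2) each of these cones is the connected component of $\{p_H \neq 0\}$ containing the corresponding base point; the segment $\bigl(\alpha e + (1-\alpha)e', 0\bigr)$ for $\alpha \in [0,1]$ stays in $\R_+^n \times \{0\}$ and, by \cref{Prop:StabilityHyperbolicity}, $p_H$ is hyperbolic (hence nonzero) at each of its points, so $(e,0)$ and $(e',0)$ lie in the same component. Consequently, for any $(x,1) \in C_{(e,0)}(p_H)$ we obtain $p(x + te') \neq 0$ for every $t \geq 0$ and every $e' \in \R_+^n$. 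Given $w \geq x$ and $\varepsilon > 0$, the choice $e' = (w - x) + \varepsilon \1 \in \R_+^n$ with $t = 1$ yields $p(w + \varepsilon \1) \neq 0$, so $x + \varepsilon \1 \in \ab_p$ for every $\varepsilon > 0$ and therefore $x \in \ovl{\ab_p}$. This gives $C_{(e,0)}(p_H) \cap \{z_{n+1}=1\} \subseteq \ovl{\ab_p} \times \{1\}$; to upgrade this to the closure, I would invoke the homogeneity of $p_H$, which makes the cone invariant under multiplication by positive scalars, so any sequence $(x_k, t_k) \to (x, 1)$ inside the cone can be rescaled to $(x_k/t_k, 1)$, showing $\ovl{C_{(e,0)}(p_H)} \cap \{z_{n+1}=1\} = \ovl{C_{(e,0)}(p_H) \cap \{z_{n+1}=1\}}$, and the desired inclusion follows.

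The main obstacle is the mismatch between the one-dimensional condition defining the hyperbolicity cone (non-vanishing on a single ray through $(e,0)$) and the $n$-dimensional condition defining $\ab_p$ (non-vanishing on an entire orthant). This is overcome by varying the hyperbolicity direction $e'$ over all of $\R_+^n$ via \cref{Prop:StabilityHyperbolicity} together with \cref{Prop:HyperbolicityConeProperties}, paying the price of a small perturbation $\varepsilon \1$ which is exactly why the reverse inclusion is only valid after taking closures. The scaling argument that reconciles $\ovl{C} \cap \{z_{n+1}=1\}$ with $\ovl{C \cap \{z_{n+1}=1\}}$ is then essentially automatic from homogeneity.
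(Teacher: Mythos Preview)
Your argument is correct; the cone-invariance step (that $C_{(e,0)}(p_H)=C_{(e',0)}(p_H)$ for all $e'\in\R_+^n$) is exactly the bridge between the one-ray condition and the full-orthant condition, and your homogeneity/rescaling step legitimately upgrades the open inclusion to the closure.

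The paper's proof takes a shorter route: rather than proving the two inclusions on the open sets and then managing closures, it writes down explicit descriptions of the two \emph{closed} sets,
\[
\ovl{\ab_p}=\{x:\ p(y)\neq 0\ \text{for all}\ y>x\},\qquad
\ovl{C_{(e,0)}(p_H)}=\{x:\ p_H(x+t(e,0))\neq 0\ \text{for all}\ t>0\},
\]
and then observes (implicitly invoking the same cone-invariance you prove) that on the slice $z_{n+1}=1$ these coincide, since as $e$ ranges over $\R_+^n$ and $t$ over $(0,\infty)$ the points $x+te$ sweep out exactly $\{y:y>x\}$. Working directly with the closures spares the paper your rescaling argument for $\ovl{C}\cap\{z_{n+1}=1\}=\ovl{C\cap\{z_{n+1}=1\}}$; conversely, your version is more self-contained in that it does not presuppose the closure characterization of the hyperbolicity cone. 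Both arguments rest on the same key fact, \cref{Prop:HyperbolicityConeProperties}, so the difference is one of packaging rather than of idea.
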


\begin{proof}
By \cref{Prop:StabilityHyperbolicity}, $p_H$ is hyperbolic with respect to $(e,0)$ for every $e \in \R_+^n$. Now, the result follows since
\begin{align*}
\ovl{\ab_p} &= \{x \in \R^n : p(y) \neq 0 \;\; \text{for} \;\; y>x\}, \\
\ovl{C_{(e,0)}(p_H)} &= \{x \in \R^{n+1} : \ p_H(x+te) \neq 0 \;\; \text{for} \;\; t > 0\}.
\end{align*}
\end{proof}

The boundary of above the roots is characterized in the following lemma.

\begin{lemma}\label{Lem:BoundaryOfAb}
Let $p \in \Hcal_n(\R)$. If $u \in \ovl{\ab_p}$ and $p(u) \neq 0$, then $u \in \ab_p$.
\end{lemma}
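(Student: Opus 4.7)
The plan is to lift to the homogenization $p_H$, which by \cref{Prop:StabilityHyperbolicity} is hyperbolic with respect to $(e,0)$ for every $e \in \R_+^n$, and work inside the hyperbolicity cone $C := C_{(e,0)}(p_H)$ for a fixed such $e$. \cref{Cor:AbHyperbolicityCone} translates the hypothesis $u \in \overline{\ab_p}$ into $(u,1) \in \overline{C}$, and the target conclusion $u \in \ab_p$ will follow once I establish $(w,1) \in C$ for every $w \geq u$, since this yields $p(w) = p_H(w,1) \neq 0$. There are then two things to do: promote $(u,1)$ from $\overline{C}$ to $C$ itself, and then show that $(w,1) = (u,1) + (w-u,0)$ stays in $C$.

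For the first step, part 2 of \cref{Prop:HyperbolicityConeProperties} identifies $C$ with a connected component of the open set $V := \{x \in \R^{n+1} : p_H(x) \neq 0\}$, so $C$ itself is open. Since $p(u) = p_H(u,1) \neq 0$, the point $(u,1)$ lies in some connected component $C'$ of $V$. Openness of $C'$ gives a neighborhood of $(u,1)$ inside $C'$, and $(u,1) \in \overline{C}$ forces this neighborhood to meet $C$; as distinct connected components are disjoint, this forces $C' = C$, so $(u,1) \in C$.

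For the second step, set $v := w - u \in \R_{\geq 0}^n$ and assume $v \neq 0$ (the case $v = 0$ is trivial). For each $\epsilon > 0$ the vector $v + \epsilon \1$ lies in $\R_+^n$, so by \cref{Prop:StabilityHyperbolicity} $p_H$ is hyperbolic with respect to $(v + \epsilon \1, 0)$; moreover the segment from $(e,0)$ to $(v + \epsilon \1, 0)$ lies in $\R_+^n \times \{0\}$ and so consists entirely of hyperbolic directions. Combining parts 2 and 3 of \cref{Prop:HyperbolicityConeProperties}, the set of hyperbolic directions of $p_H$ decomposes as a disjoint union of open hyperbolicity cones, so both endpoints of this segment lie in the same cone; in particular $(v + \epsilon \1, 0) \in C_{(v + \epsilon \1, 0)}(p_H) = C$. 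Letting $\epsilon \to 0$ gives $(v, 0) \in \overline{C}$.

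The final step uses that the open convex cone $C$ satisfies $C + \overline{C} \subseteq C$. Indeed, $\overline{C}$ is a closed convex cone and hence closed under addition, while $\mathrm{int}(\overline{C}) = C$ (as $C$ is open and convex with itself as interior). Given $x \in C$, pick $\delta > 0$ with $B(x, \delta) \subseteq C \subseteq \overline{C}$; for any $y \in \overline{C}$, $B(x + y, \delta) = B(x, \delta) + y \subseteq \overline{C} + \overline{C} \subseteq \overline{C}$, so $x + y \in \mathrm{int}(\overline{C}) = C$. Applying this with $x = (u,1) \in C$ and $y = (v,0) \in \overline{C}$ yields $(w,1) = (u,1) + (v,0) \in C$, hence $p(w) \neq 0$ as desired. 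I expect the main subtlety to be this last convex-geometric step, which captures precisely why $\overline{\ab_p} \setminus \ab_p$ cannot contain points where $p$ is nonzero.
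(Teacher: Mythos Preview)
Your proof is correct. Both your argument and the paper's lift to the homogenization $p_H$ and work inside the hyperbolicity cone $C = C_{(e,0)}(p_H)$, and both begin by upgrading $(u,1)$ from $\overline{C}$ to $C$ itself using $p_H(u,1)=p(u)\neq 0$. From there the routes diverge. The paper argues by contradiction: assuming $p(u+v)=0$ for some $v\in\R_{\geq 0}^n$, it uses convexity of $C$ (together with a perturbation to handle $v$ on the boundary of the positive orthant, treating the cases $t\geq 1$ and $0<t<1$ separately) to force $p(u+tv)=0$ for every $t>0$, contradicting $p(u)\neq 0$ in the limit. You instead proceed directly: you show $(v,0)\in\overline{C}$ for every $v\in\R_{\geq 0}^n$ by approximating with $(v+\epsilon\1,0)\in \R_+^n\times\{0\}\subseteq C$, and then invoke the convex-cone identity $C+\overline{C}\subseteq C$ to conclude $(u+v,1)\in C$. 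Your approach is cleaner and more conceptual, isolating exactly the convex-geometric fact that drives the result; the paper's approach is more hands-on and avoids stating that abstract lemma, at the cost of a case split and a second perturbation argument.
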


\begin{proof}
We must show that $p(u+v) \neq 0$ for every $v \in \R_{\geq 0}^n$. Assume otherwise and let $v \in \R_{\geq 0}^n$ be such that $p(u+v) = 0$. We claim that $p(u+tv) = 0$ for all $t>0$. Assume to the contrary that $t>0$ and $p(u+tv) \neq 0$.

Note that it follows from the proof of \cref{Cor:AbHyperbolicityCone} that if $\alpha \in \ovl{\ab_p}$ and $p(\alpha) \neq 0$, then $(\alpha,1) \in C_{(e,1)}(p_H)$ for every $e \in \R_+^n$. Therefore, by the assumptions, $(u,1) \in C_{(e,0)}(p_H)$. Also, since $p(u+tv) \neq 0$, we have $(u+tv,1) \in C_{(e,0)}(p_H)$.

Assume that $t \geq 1$. Since $C_{(e,0)}(p_H)$ is convex by part 1 of \cref{Prop:HyperbolicityConeProperties}, we have $(u+v,1) \in C_{(e,0)}(p_H)$. Consequently, $p(u+v) \neq 0$ which is a contraction. Therefore, we have $p(u+tv) = 0$ for all $t \geq 1$.

Now, assume that $t \in (0,1)$. Let $w \in \R_{\geq0}^n$ be a vector such that $w_i = 0$ for those $i$ that $v_i \neq 0$ and $w_i < 0$ for those $i$ that $v_i = 0$. Assume that $w$ has small Euclidean norm. Then, by continuity of roots with respect to coefficients, $p(u+t(v+w)) \neq 0$ and hence $(u+t(v+w),1) \in C_{(e,0)}(p_H)$. Let $s>1$ and consider the following vector
\begin{align*}
a := \big(u+t(v+w)\big) + s\Big((u+v)-\big[u+t(v+w)\big]\Big) = u + (t+s-ts)v + t(1-s) w.
\end{align*}
Note that $a > u$. Hence $(a,1) \in C_{(e,0)}(p_H)$. Also, $u+v$ is on the segment connecting $u+t(v+w)$ and $a$. Therefore, since $C_{(e,0)}(p_H)$ is convex, $(u+v,1) \in C_{(e,0)}(p_H)$ and so $p(u+v) \neq 0$ which is a contradiction. Therefore, we have $p(u+tv) = 0$ for all $0 < t < 1$. This completes the proof of our claim. But now it follows that $p(u) = 0$, which itself is a contradiction. So we must have $p(u+v) \neq 0$ for every $v \in \R_{\geq 0}^n$ which means $u \in \ab_p$.
\end{proof}

We are ready to prove \cref{Lem:AbLemma}.


\begin{proof}[Proof of \cref{Lem:AbLemma}]
The assumption is equivalent to $p((u-v)+tv) \neq 0$ for all $t \in [0,1]$. Let $w \in \R_{\geq0}^n$ be a vector with small Euclidean norm such that $w_i=0$ for those $i$ that $v_i \neq 0$ and $v+w \in \R_+^n$. By continuity of roots with respect to coefficients, $p((u-v)+t(v+w)) \neq 0$ for all $t \in [0,1]$. For $t>1$ we have
\begin{align*}
p\big((u-v)+t(v+w)\big) = p\big(u+((t-1)v+tw)\big) \neq 0,
\end{align*}
where we used the fact that $(t-1)v+tw \in \R_+^n$ and $u \in \ab_p$. Therefore, for all $t \geq 0$ we have $p(u-v+t(v+w)) \neq 0$. Also, by \cref{Prop:StabilityHyperbolicity}, $p_H$ is hyperbolic with respect to $(v+w,0)$. Therefore, $(u-v,1) \in C_{(v+w,0)}(p_H)$. By \cref{Cor:AbHyperbolicityCone}, we have $u-v \in \ovl{\ab_p}$. Since $p(u-v) \neq 0$, it follows from \cref{Lem:BoundaryOfAb} that $u-v \in \ab_p$.
\end{proof}

\end{document}